\documentclass[11pt]{amsart}

\usepackage[utf8x]{inputenc}
\usepackage[english]{babel}

\usepackage{graphicx}
\usepackage{paralist}

\usepackage{amssymb,amsmath}
\usepackage[bookmarks,colorlinks,citecolor=magenta,pdftex]{hyperref}
\usepackage[dvipsnames]{xcolor}
\usepackage{tikz}
\usetikzlibrary{shapes}
\usepackage[a4paper,margin=2.7cm]{geometry}
\usepackage{bbm}

\newcommand\Defn[1]{\textbf{\color{blue}#1}}

\newcommand\vdotsHacked{\vphantom{\int^X}\smash{\vdots}}
\newcommand\ddotsHacked{\vphantom{\int^X}\smash{\ddots}}

\setcounter{MaxMatrixCols}{20}

\newcommand\R{\mathbb{R}}

\renewcommand\emptyset{\varnothing}

\newcommand\1{\mathbf{1}}

\newcommand\rc{\mathrm{rc}}
\newcommand\rrc{\mathrm{rrc}}

\newcommand\x{\mathbf{x}}

\DeclareMathOperator{\conv}{conv}
\DeclareMathOperator{\cone}{cone}
\DeclareMathOperator{\aff}{aff}
\DeclareMathOperator{\homog}{hom}

\newcommand\rk{\mathrm{rk}}
\newcommand\rkN{\rk_+}
\newcommand\rkPSD{\rk_\mathrm{psd}}

\newcommand\Gnk[2]{\mathcal{G}({#1,#2})}

\newcommand\Rel{\mathcal{R}}

\newcommand{\ffloor}[2]{\left\lfloor{\frac{#1}{#2}}\right\rfloor} %
\newcommand{\fceil}[2]{\left\lceil {\frac{#1}{#2}} \right\rceil} %

\newcommand\ext[1]{\widehat{{#1}}}

\newtheorem{thm}{Theorem}[section]
\newtheorem{cor}[thm]{Corollary}
\newtheorem{prop}[thm]{Proposition}
\newtheorem{lem}[thm]{Lemma}
\newtheorem{conj}{Conjecture}
\newtheorem{quest}{Question}

\theoremstyle{definition}

\newtheorem{ex}{Example}

\makeatletter
\newtheorem*{rep@theorem}{\rep@title}
\newcommand{\newreptheorem}[2]{%
\newenvironment{rep#1}[1]{%
 \def\rep@title{#2 \ref{##1}}%
 \begin{rep@theorem}}%
 {\end{rep@theorem}}}
\makeatother

\newreptheorem{thm}{Theorem}
\newreptheorem{cor}{Corollary}
\newreptheorem{conj}{Conjecture}

\parindent=0pt
\parskip=3pt

\title{Extension complexity and realization spaces of hypersimplices}

\author{Francesco Grande}
\address{Fachbereich Mathematik und Informatik, %
Freie Universit\"at Berlin, Berlin, %
Germany}
\email{fgrande@math.fu-berlin.de}

\author{Arnau Padrol}
\address{
Institut de Mathématiques de Jussieu - Paris Rive Gauche (UMR 7586),
Universit\'e Pierre et Marie Curie (Paris 6), Paris, %
France}
\email{arnau.padrol@imj-prg.fr}

\author{Raman Sanyal}
\address{Institut f\"ur Mathematik, Goethe-Universit\"at Frankfurt, Germany}
\email{sanyal@math.uni-frankfurt.de}

\keywords{hypersimplices, extension complexity, nonnegative rank, rectangle
covering number, realization spaces}
\subjclass[2010]{%
90C57, %
52B12, %
15A23%
}

\date{\today}
\thanks{F.~Grande was supported by DFG within the research training group
``Methods for Discrete Structures'' (GRK1408).  A.~Padrol and R.~Sanyal were
supported by the DFG Collaborative Research Center SFB/TR 109 ``Discretization
in Geometry and Dynamics''. 
A.~Padrol was also supported by the program PEPS
Jeunes Chercheur-e-s 2016 of the INSMI (CNRS)
}

\begin{document}

\maketitle

\begin{abstract}
    The $(n,k)$-hypersimplex is the convex hull of all $0/1$-vectors of length
    $n$ with coordinate sum $k$. We explicitly determine the extension
    complexity of all hypersimplices as well as of certain classes of
    combinatorial hypersimplices.  To that end, we investigate the projective
    realization spaces of hypersimplices and their (refined) rectangle
    covering numbers. Our proofs combine ideas from geometry and
    combinatorics and are partly computer assisted.
\end{abstract}

\section{Introduction}\label{sec:intro}

\subsection{The extension complexity of hypersimplices}

The \Defn{extension complexity} or \Defn{nonnegative rank} $\rkN(P)$ of a
convex polytope $P$ is the minimal number of facets (i.e., describing linear
inequalities) of an extension, a polytope $\ext{P}$ that linearly projects
onto $P$. The motivation for this definition comes from linear optimization:
The computational complexity of the simplex algorithm is intimately tied to
the number of linear inequalities and hence it can be advantageous to optimize
over~$\ext{P}$.  As a complexity measure, the nonnegative rank is an object of
active research in combinatorial optimization; see~\cite{dagstuhl}.  There are
very few families of polytopes for which the exact nonnegative rank is known.
Besides simplices, examples are cubes, crosspolytopes, Birkhoff polytopes and
bipartite matching polytopes~\cite{FKPT13} as well as  all $d$-dimensional
polytopes with at most $d+4$ vertices~\cite{Padrol16}.
Determining the nonnegative rank is non-trivial even for
polygons~\cite{polygons1,polygons2, shitov2, shitov}. For important classes of
polytopes exponential lower bounds obtained in~\cite{FMPT,rothvoss,rothvoss2}
are celebrated results.  

In the first part of the paper we explicitly
determine the nonnegative rank of the family of hypersimplices. For $0 < k <
n$, the \Defn{$\boldsymbol{(n,k)}$-hypersimplex} is the convex polytope
\begin{equation}\label{eqn:hyper}
    \Delta_{n,k} \ = \ \conv\left\{ x \in \{0,1\}^n : x_1 + \cdots + x_n =
    k\right\}.
\end{equation}
Hypersimplices were first described (and named) in connection with moment
polytopes of orbit closures in Grassmannians (see~\cite{GGMS}) but, of course,
they are prominent objects in combinatorial optimization, appearing in
connection with packing problems and matroid theory; see also below. 
This marks hypersimplices as polytopes of considerable interest and naturally
prompts the question as to their extension complexity.  

Note that
$\Delta_{n,k}$ is affinely isomorphic to $\Delta_{n,n-k}$. The hypersimplex
$\Delta_{n,1} = \Delta_{n-1}$ is the standard simplex of dimension $n-1$ and
$\rkN(\Delta_{n-1}) = n$. Our first result concerns the extension complexity
of the \emph{proper} hypersimplices, that is, the hypersimplices $\Delta_{n,k}$
with $2 \le k \le n-2$.

\begin{thm}\label{thm:main}
    The hypersimplex $\Delta_{4,2}$ has extension complexity $6$, the
    hypersimplices $\Delta_{5,2} \cong \Delta_{5,3}$ have extension complexity
    $9$. For any $n\ge 6$ and $2 \le k \le n-2$, we have $\rkN(\Delta_{n,k}) =
    2n$.
\end{thm}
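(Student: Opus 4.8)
The plan is to establish $\rkN(\Delta_{n,k}) = 2n$ for $n \ge 6$ and $2 \le k \le n-2$ by matching an upper bound with a lower bound, and then to deal with the small sporadic cases $\Delta_{4,2}$ and $\Delta_{5,2}$ separately by hand (or with computer assistance). For the \emph{upper bound} $\rkN(\Delta_{n,k}) \le 2n$, the key observation is that $\Delta_{n,k}$ is cut out of the cube $[0,1]^n$ by the single equation $\sum x_i = k$, so its facets are exactly the $2n$ facet-defining inequalities $x_i \ge 0$ and $x_i \le 1$ inherited from the cube. In other words, $\Delta_{n,k} = [0,1]^n \cap \{x : \ones^\top x = k\}$, and since the cube has $\rkN([0,1]^n) = 2n$ with the obvious extension (the cube \emph{is} its own minimal extension), intersecting with an affine subspace does not increase the number of inequalities. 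So $2n$ facets already suffice and no extension is even needed to certify the upper bound — indeed $\Delta_{n,k}$ literally has $2n$ facets for $2 \le k \le n-2$.

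The heart of the matter is the \emph{lower bound} $\rkN(\Delta_{n,k}) \ge 2n$. The standard tool here is Yannakakis's theorem: $\rkN(P)$ equals the nonnegative rank of any slack matrix $S$ of $P$, where $S_{v,F}$ records the slack of vertex $v$ in facet inequality $F$. I would first write down the slack matrix explicitly: rows indexed by $0/1$-vectors $v$ of weight $k$, columns split into the $n$ ``lower'' facets ($x_i \ge 0$, slack $v_i$) and the $n$ ``upper'' facets ($x_i \le 1$, slack $1 - v_i$). Then I would bound $\rkN(S)$ from below. The cheapest lower bound, the \emph{rectangle covering number} $\rc(S)$ (the minimum number of combinatorial rectangles — all-positive submatrices — needed to cover the support of $S$), is likely \emph{not} enough to reach $2n$ on its own; I expect $\rc(\Delta_{n,k})$ to fall short, which is presumably exactly why the paper invests in ``refined'' rectangle covering numbers as announced in the abstract. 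So the real argument should go through either (a) a refined/fractional rectangle covering bound that does reach $2n$, exploiting the rich symmetry group $S_n$ acting on $\Delta_{n,k}$ to reduce to analyzing a single orbit of rectangles, or (b) a direct combinatorial-geometric argument: any extension $\ext P \to \Delta_{n,k}$ with few facets would, via the correspondence between extensions and factorizations, force a nonnegative factorization $S = AB$ with inner dimension $< 2n$, and one derives a contradiction by tracking how the $S_n$-symmetry and the combinatorics of weight-$k$ vectors constrain the columns of $A$ (e.g., showing the $2n$ ``facet vectors'' must be ``essentially independent'' in the nonnegative sense).

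The main obstacle I anticipate is precisely this lower bound: plain rank gives only $\rk(S) = n$ (the affine hull is $(n-1)$-dimensional, so the slack matrix has rank $n$), which is a factor of two too weak, and the combinatorial rectangle covering number may also be too weak, so one genuinely needs the refined invariant and a careful case analysis — this is the step where I'd expect the geometry of realization spaces (showing that no ``degenerate'' realization of the combinatorial type of $\Delta_{n,k}$ with fewer facets exists, which is what rules out small extensions) and computer assistance to enter, especially to verify base cases like $n = 6$ from which an inductive argument on $n$ (peeling off a coordinate, relating $\Delta_{n,k}$ to $\Delta_{n-1,k-1}$ and $\Delta_{n-1,k}$ which appear as faces) could propagate the bound upward. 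The sporadic values $\rkN(\Delta_{4,2}) = 6$ (here $\Delta_{4,2}$ is the octahedron, with $\rkN = 6$ known) and $\rkN(\Delta_{5,2}) = 9 < 10$ show that the generic answer $2n$ genuinely fails for small $n$, so the induction cannot start too early and the threshold $n \ge 6$ must be respected — pinning down exactly why $n = 6$ works but $n = 5$ does not is likely the subtlest point.
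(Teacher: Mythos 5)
Your skeleton (trivial upper bound of $2n$ from the facet description, a computer-verified base case, and an induction on $n$ peeling off faces $\Delta_{n-1,k}$ and $\Delta_{n-1,k-1}$) matches the paper's architecture, but the proposal leaves the actual lower-bound mechanism unresolved, and that mechanism is the entire content of the theorem. The paper's inductive step does not go through slack-matrix factorizations or symmetry-constrained columns of $A$; it is purely geometric. The two ingredients are: (i) if $F_1,F_2$ are \emph{disjoint} facets of $P$ then $\rkN(P)\ge\min\{\rkN(F_1),\rkN(F_2)\}+2$ (proved by looking at the preimages $\pi^{-1}(F_i)\cap\ext{P}$ in a minimal extension), applied to the disjoint pairs $F_i\cong\Delta_{n-1,k}$ and $G_i\cong\Delta_{n-1,k-1}$ — this alone handles $2<k<n-2$; and (ii) for $k=2$ this fails badly, since $G_i$ is a simplex and the minimum in (i) only yields about $n+1$. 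The paper's fix for $k=2$ is the subtlest step and is absent from your plan: one shows that the hyperplanes supporting the $n$ facets $G_i$ bound a full-dimensional \emph{simplex}, deduces that the polyhedron $\ext{Q}$ cut out by the non-$F$ inequalities of a minimal extension is bounded, and hence contributes at least $n$ facets on top of the $n$ facets coming from the $F_i$. Without some replacement for this argument, your induction cannot reach $2n$ when $k=2$ or $k=n-2$.

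Two further miscalibrations. First, you predict that the plain rectangle covering number falls short and that the \emph{refined} invariant is needed for the base case; in fact the paper establishes $\rc(\Delta_{6,2})=\rc(\Delta_{6,3})=12$ by a SAT computation, so the ordinary $\rc$ suffices exactly where it is needed ($n=6$), and it is only for large $n$ (e.g.\ $\rc(\Delta_{10,2})\le 19$) that $\rc$ degenerates — which is harmless because by then the geometric induction has taken over. The refined rectangle covering number is used only for the separate result that \emph{generic} realizations of $\Delta_{5,2}$ have nonnegative rank $10$, not in the proof of Theorem~\ref{thm:main}. Second, the sporadic values are not really ``by hand'': $\Delta_{4,2}$ is the octahedron, polar to the cube, so extension maximality of combinatorial cubes applies; and $\rkN(\Delta_{5,2})=9$ requires exhibiting an explicit $9$-facet extension (a $2$-fold pyramid over the polytope obtained by deleting two non-adjacent vertices) together with a matching computational lower bound. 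As written, your proposal is a plausible research plan rather than a proof: the decisive steps — the disjoint-facets lemma and the boundedness argument for $k=2$ — still have to be found.
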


It is straightforward to check that
\begin{equation}\label{eqn:ineq}
    \Delta_{n,k} \ = \  [0,1]^n \cap \{ x \in \R^n :  x_1 + \cdots + x_n = k\}
\end{equation}
and that for $1 < k < n-1$, all $2n$ inequalities of the $n$-dimensional cube
are necessary. The nonnegative rank of a polytope is trivially upper bounded
by the minimum of the number of vertices and the number of facets. We call a
polytope $P$ \Defn{extension maximal} if it attains this upper bound. Cubes as
well as their duals, the crosspolytopes, are know to be extension maximal; see
also Corollary~\ref{cor:cube}. Theorem~\ref{thm:main} states that in addition
to simplices, cubes, and crosspolytopes, all proper hypersimplices except for
$\Delta_{5,2}$ are extension maximal. 

\subsection{Psd rank and $2$-level matroids}
Our original motivation for studying the nonnegative rank of hypersimplices
comes from matroid theory~\cite{Oxley}. For a matroid $M$ on the ground set $[n]
:= \{1,\dots,n\}$ and bases $\mathcal{B} \subseteq 2^{[n]}$, the associated
\Defn{matroid base polytope} is the polytope
\[
    P_M \ := \ \conv \{ \1_B : B \in \mathcal{B}\},
\]
where $\1_B \in \{0,1\}^n$ is the characteristic vector of $B \subseteq [n]$.
Hence, the $(n,k)$-hypersimplex is the matroid base polytope of the uniform
matroid $U_{n,k}$. In~\cite{gs14}, the first and third author studied
\mbox{\Defn{$\boldsymbol 2$-level matroids}}, which exhibit extremal behavior
with respect to various geometric and algebraic measures of complexity. In
particular, it is shown that $M$ is $2$-level if and only if $P_M$ is
\Defn{psd minimal}. The \Defn{psd rank} $\rkPSD(P)$ of a polytope $P$ is the
smallest size of a spectrahedron (an affine section of the positive definite
cone) that projects onto $P$. In~\cite{GRT} it is shown that $\rkPSD(P) \ge
\dim P + 1$ and polytopes attaining this bound are called psd minimal. Our
starting point was the natural question whether the class of $2$-level
matroids also exhibits an extremal behavior with respect to the nonnegative
rank.  We recall from~\cite[Theorem~1.2]{gs14} the following synthetic
description of $2$-level matroids: A matroid $M$ is $2$-level if and only if
it can be constructed from uniform matroids by taking direct sums or $2$-sums.
So, the right starting point are the hypersimplices.  

To extend Theorem~\ref{thm:main} to all $2$-level matroids, it would be
necessary to understand the effect of taking direct and $2$-sums on the
nonnegative rank.  The direct sum of matroids translates into the Cartesian
product of matroid polytopes. Two out of three authors of this paper believe
in the following conjecture, first asked during a Dagstuhl seminar in
2013~\cite{dagstuhlseminar2013}. 

\begin{conj}\label{conj:prod}
    The nonnegative rank is additive with respect to Cartesian products, that
    is,
    \[
        \rkN(P_1 \times P_2) \ = \ \rkN(P_1) + \rkN(P_2),
    \]
    for polytopes $P_1$ and $P_2$.
\end{conj}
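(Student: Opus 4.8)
I should flag at once that this statement is conjectural precisely because no proof is known; what follows is a plan of attack whose final step is the open part. The inequality $\rkN(P_1\times P_2)\le\rkN(P_1)+\rkN(P_2)$ is immediate: if $\ext{P_i}\to P_i$ is an extension with $r_i$ facets, then $\ext{P_1}\times\ext{P_2}\to P_1\times P_2$ is an extension with $r_1+r_2$ facets. For the reverse inequality I would work with slack matrices via Yannakakis's theorem, which equates $\rkN(P)$ with the nonnegative rank of a slack matrix $S(P)$. Indexing rows by vertices and columns by facets, the slack matrix of a product has, after reordering, the block form
\[
    S(P_1\times P_2)\ =\ \bigl[\,S_1\otimes\ones\ \big|\ \ones\otimes S_2\,\bigr],
\]
since a facet $F\times P_2$ only sees the first coordinate of a vertex $(v,w)$ and a facet $P_1\times G$ only the second. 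As $S_i$ is a submatrix of $S_i\otimes\ones$ and any nonnegative factorization of $S_i$ tensors up, $\rkN(S_i\otimes\ones)=\rkN(S_i)$; so the conjecture reduces to showing that every decomposition $S(P_1\times P_2)=\sum_{t=1}^{r}u_t\,[\,a_t^\top\mid b_t^\top\,]$ into nonnegative rank-one summands (with $u_t,a_t,b_t\ge\0$) has $r\ge\rkN(S_1)+\rkN(S_2)$.

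The natural next move is to classify the summands by whether $a_t$ or $b_t$ vanishes. If no summand is \emph{mixed} — i.e.\ every summand has $a_t=\0$ or $b_t=\0$ — then those with $a_t\ne\0$ already factor $S_1\otimes\ones$ and those with $b_t\ne\0$ factor $\ones\otimes S_2$, and the bound follows immediately. The obstacle, and I expect the crux, is exactly the mixed summands: a single rank-one term can contribute to both blocks at once, so one ``extension direction'' might serve both $P_1$ and $P_2$ and the naive count collapses. One would need a rigidity statement guaranteeing that an optimal factorization can be chosen to respect the product structure, or at least that mixed summands never push $r$ below $\rkN(S_1)+\rkN(S_2)$. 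I do not see how to establish this in general, and this is where the difficulty really sits.

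The plan would therefore first target several sub-cases. First, $P_2=\Delta_d$ a simplex: a direct analysis of the (prism-like) product $P_1\times\Delta_d$ should yield $\rkN(P_1\times\Delta_d)=\rkN(P_1)+d+1$, which already covers products with the simplices $\Delta_{n,1}$. Second, the purely combinatorial analogue: show $\rc\bigl(S(P_1\times P_2)\bigr)=\rc(S_1)+\rc(S_2)$ for the rectangle covering number; here the point to check is that a \emph{mixed} combinatorial rectangle is forced to live on a proper sub-box $A_1\times A_2\subsetneq V(P_1)\times V(P_2)$ of the common vertex set — because each facet of $P_i$ contains a vertex — so that such a rectangle cannot by itself cover a full facet-column inside either block; this makes mixed rectangles plausibly useless, but turning that intuition into a proof still requires an exchange argument. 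Third, the case where $\rkN(P_i)=\rc(S_i)$ for both factors: combined with the second point this immediately closes the gap, and it is precisely the regime of the hypersimplices handled in Theorem~\ref{thm:main}. Going beyond these would demand a genuinely new handle on mixed rank-one factors.
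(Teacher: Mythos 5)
You are right to flag that this is a conjecture: the paper does not prove it, and indeed states that only two of the three authors even believe it. So there is no ``paper's own proof'' to measure your attempt against, and your honest identification of the open step (the mixed rank-one summands in a nonnegative factorization of the product's slack matrix) is exactly where the difficulty is understood to sit. Your setup is sound: the trivial subadditivity via products of extensions is the same observation the paper makes, and the block form $\bigl[\,S_1\otimes\ones\mid\ones\otimes S_2\,\bigr]$ of the product slack matrix, together with the fact that duplicating columns does not change nonnegative rank, is the standard Yannakakis-side reformulation.

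Where your plan and the paper diverge is in the one sub-case that is actually settled. You propose to handle $P_2=\Delta_d$ by a direct analysis of the prism-like product; the paper's Corollary~\ref{cor:prod_simplex} proves precisely $\rkN(P\times\Delta_k)=\rkN(P)+k+1$, but by a purely geometric route rather than through slack matrices: Lemma~\ref{lem:2distinct} shows that two \emph{disjoint} facets each of nonnegative rank $\ge r$ force $\rkN\ge r+2$ (because their preimages in a minimal extension are disjoint faces, hence lie in distinct facets of the extension), which handles the prism $P\times\Delta_1$, and then Lemma~\ref{lem:facet} plus induction on $k$ does the rest. That geometric argument sidesteps the mixed-summand issue entirely for this special case, which is why it closes there and not in general. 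Your second and third sub-cases (additivity of the rectangle covering number, and factors with $\rkN=\rc$) are not carried out in the paper either; note that the paper's Lemma~\ref{lem:rc} only gives the weaker $\rc(S_P)\ge\min\{\rc(S_F),\rc(S_G)\}+2$ for disjoint facets, and the octagon-times-square example after Lemma~\ref{lem:2distinct} shows that the $\min$ cannot be upgraded to $\max$, so even the combinatorial analogue of additivity would need a genuinely new exchange argument along the lines you sketch. In short: your proposal is a reasonable research program, correctly locates the obstruction, and its provable fragment coincides with the paper's Corollary~\ref{cor:prod_simplex}, reached there by a different and more elementary geometric argument.
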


We provide evidence in favor of Conjecture~\ref{conj:prod} by showing it to
hold whenever one of the factors is a simplex
(cf.~Corollary~\ref{cor:prod_simplex}).
By taking products of extensions it trivially follows that the nonnegative
rank is subadditive with respect to Cartesian products. As for the $2$-sum
$M_1 \oplus_2 M_2$ of two matroids $M_1$ and $M_2$, it follows
from~\cite[Lemma~3.4]{gs14} that $P_{M_1 \oplus_2 M_2}$ is a codimension-$1$
section of $P_{M_1} \times P_{M_2}$ and the extension complexity is therefore
dominated by that of the direct sum. Combined with Theorem~\ref{thm:main}
and~\cite[Theorem~1.2]{gs14} we obtain the following simple estimate.

\begin{cor}
    If $M$ is a $2$-level matroid on $n$ elements, then $\rk(P_M) \le 2n$.
\end{cor}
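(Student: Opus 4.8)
The plan is to exploit the synthetic description of $2$-level matroids from~\cite[Theorem~1.2]{gs14} together with Theorem~\ref{thm:main}, inducting on the number of operations in a construction of $M$ from uniform matroids by direct sums and $2$-sums. Every matroid occurring in such a construction is itself built from uniform matroids in the same way, hence is again $2$-level, so the induction is well founded. In the base case $M = U_{n,k}$ is uniform and $P_M = \Delta_{n,k}$; Theorem~\ref{thm:main} gives $\rkN(\Delta_{n,k}) \le 2n$ in all cases: for $k = 1$ or $k = n-1$ the polytope is a simplex with $\rkN(\Delta_{n,k}) = n$, one has $\rkN(\Delta_{4,2}) = 6 < 8$ and $\rkN(\Delta_{5,2}) = 9 < 10$, while $\rkN(\Delta_{n,k}) = 2n$ for $n \ge 6$ (and degenerate uniform matroids give a point).

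For the inductive step there are two cases. If $M = M_1 \oplus M_2$ with $M_i$ on $n_i$ elements, then $n = n_1 + n_2$ and $P_M = P_{M_1} \times P_{M_2}$; since a product of extensions of the factors is an extension of the product, $\rkN(P_M) \le \rkN(P_{M_1}) + \rkN(P_{M_2}) \le 2n_1 + 2n_2 = 2n$ by the inductive hypothesis. If $M = M_1 \oplus_2 M_2$ with basepoint $p$, where $M_i$ has ground set of size $n_i$ and $n = n_1 + n_2 - 2$, then by~\cite[Lemma~3.4]{gs14} the polytope $P_M$ is obtained from $P_{M_1} \times P_{M_2}$ by intersecting with the affine hyperplane that glues the two basepoint coordinates and then forgetting those coordinates. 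Intersecting an extension with a hyperplane does not increase its number of facets, and coordinate projection cannot increase extension complexity, so $\rkN(P_M) \le \rkN(P_{M_1} \times P_{M_2}) \le \rkN(P_{M_1}) + \rkN(P_{M_2})$.

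The main obstacle is the arithmetic of this last inequality. A $2$-sum removes two ground-set elements, so $n_1 + n_2 = n + 2$, and the crude bound $\rkN(P_{M_i}) \le 2 n_i$ only gives $\rkN(P_M) \le 2n + 4$: the induction does not close on the nose. One improvement is immediate: replacing the two basepoint coordinates by a single glue variable $t$ with $x_p^{(1)} = t$ and $x_p^{(2)} = 1 - t$ collapses the four box inequalities $0 \le x_p^{(1)} \le 1$ and $0 \le x_p^{(2)} \le 1$ to $0 \le t \le 1$, improving the estimate to $\rkN(P_M) \le 2n + 2$. Recovering the final $2$, so that the induction closes at $2n$, is the delicate point: one would arrange the construction so that every $2$-sum attaches a uniform matroid $U_{n_i,k_i}$ and exploit that $\rkN(\Delta_{n_i,k_i})$ is strictly below $2n_i$ when $U_{n_i,k_i}$ is a simplex (whose extension then needs no upper-bound box inequalities) or one of the exceptional hypersimplices $\Delta_{4,2}$, $\Delta_{5,2}$, using this surplus to pay for the $2$-sums. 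Verifying that this bookkeeping can always be made to work is the crux; the geometric ingredients—subadditivity of the nonnegative rank under Cartesian products, its invariance under projections and non-increase under hyperplane sections, and Theorem~\ref{thm:main}—are routine.
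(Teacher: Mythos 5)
Your proposal follows the same strategy the paper sketches in the paragraph preceding the corollary --- induct on a decomposition of $M$ into uniform matroids by direct and $2$-sums, using subadditivity of $\rkN$ under products and the fact that $P_{M_1 \oplus_2 M_2}$ is a hyperplane section of $P_{M_1}\times P_{M_2}$ --- and you have correctly put your finger on exactly where the arithmetic fails to close. As you note, a $2$-sum removes two ground-set elements, so the crude decomposition bound carries a deficit of $4$ per $2$-sum. That gap is genuine, and the repair you propose (harvesting slack from summands that are simplices or one of the exceptional hypersimplices $\Delta_{4,2}$, $\Delta_{5,2}$) is not available in general, because most summands contribute no slack at all.

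Concretely, take $M = U_{6,3} \oplus_2 U_{6,3}$, a connected $2$-level matroid on $n=10$ elements. By Theorem~\ref{thm:main}, $\rkN(\Delta_{6,3}) = 12 = 2\cdot 6$, so subadditivity plus the codimension-$1$ section only yields $\rkN(P_M) \leq 12+12 = 24$, whereas the corollary asserts $\rkN(P_M)\leq 20$. There is no slack anywhere to pay for the $2$-sum. Worse, eliminating the two basepoint coordinates shows
$P_M \cong \bigl\{(a,b) \in [0,1]^5 \times [0,1]^5 : \textstyle\sum_i a_i + \sum_i b_i = 5,\ 2 \leq \sum_i a_i \leq 3\bigr\}$,
which one can check has $22$ facets, so even the trivial facet bound $\rkN(P_M)\leq f(P_M)$ overshoots $2n$. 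Your inductive step therefore does not close at the $2$-sum case, and closing it would require an idea beyond what your proposal supplies; the paper itself gives only the same sketch before stating the corollary and is subject to the same arithmetic deficit.
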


\subsection{Extension complexity of combinatorial hypersimplices}
The extension complexity is not an invariant of the combinatorial type. That
is, two combinatorially isomorphic polytopes do not necessarily have the same
extension complexity. For example, the extension complexity of a hexagon is
either $5$ or $6$ depending on the incidences of the facet-defining
lines~\cite[Prop.~4]{polygons2}. On the other hand, the extension complexity
of any polytope combinatorially isomorphic to the $n$-dimensional cube is
always $2n$; cf.\ Corollary~\ref{cor:cube}. The close connection to simplices
and cubes and Theorem~\ref{thm:main} raises the following question for
\Defn{combinatorial} $(n,k)$-hypersimplices.

\begin{quest}\label{quest:comb}
    Is $\rkN(P)=2n$ for any combinatorial $(n,k)$-hypersimplex $P$ with $n\geq
    6$ and $2\leq k\leq n-2$?
\end{quest}
For $n=6$ and $k\in\{ 2,3\}$ this is true due to
Proposition~\ref{prop:small_cases} but we suspect that the answer is no for
some $n > 6$ and $k=2,n-2$. The rectangle covering number $\rc(P)$ of a
polytope~$P$ is a combinatorial invariant that gives a lower bound on
$\rkN(P)$; see Section~\ref{sec:rc}.  While the rectangle covering number of
the \emph{small} hypersimplices $\Delta_{6,2}$ and $\Delta_{6,3}$ is key to
our proof of Theorem~\ref{thm:main}, it is not strong enough to resolve
Question~\ref{quest:comb} (see Proposition~\ref{prop:rcbounds}).

We introduce the notions of $F$-, $G$-, and $FG$-genericity of combinatorial
hypersimplices, that are defined in terms of the relative position of certain
facets and that play a crucial role. We show that all $FG$-generic
hypersimplices are extension maximal (Theorem~\ref{thm:main_generic}).
Unfortunately, $FG$-genericity is not a property met by all hypersimplices,
which is confirmed by the existence of a non-$FG$-generic realization of
$\Delta_{6,2}$; see Proposition~\ref{prop:singular62}.  On the other hand, we
show that hypersimplices with $n \ge 6$ and $\lfloor\frac{n}{2}\rfloor \le k
\le \lceil\frac{n}{2}\rceil$ are $FG$-generic, which ensues the following.

\begin{cor}\label{cor:XCcombinatorial}
    If $P$ is a combinatorial $(n,k)$-hypersimplex with $n\geq 6$ and $2\leq
    k\leq \lceil\frac{n}{2}\rceil$, then
    \[ 
        \rkN(P) \ \geq \ 
        \begin{cases}
            n+2k+1 & \text{ if }k<\ffloor{n}{2},\\
            2n & \text{ otherwise.}
              \end{cases}
              \]
\end{cor}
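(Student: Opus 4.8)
The two halves of the inequality are proved by bounding the refined rectangle covering number from below, using $\rkN(P)\ge\rrc(P)$. First record the relevant combinatorics: a combinatorial $(n,k)$-hypersimplex $P$ has the same vertex--facet incidences as $\Delta_{n,k}$, hence exactly $2n$ facets, one for each of the inequalities $x_i\ge 0$ and $x_i\le 1$, and $\binom nk$ vertices. Since $\binom nk\ge\binom n2\ge 2n$ for $n\ge 6$, the trivial upper bound $\min\{\binom nk,\,2n\}$ for $\rkN(P)$ equals $2n$; thus, in the range under consideration, ``extension maximal'' means precisely $\rkN(P)=2n$.

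Assume first $\ffloor{n}{2}\le k\le\fceil{n}{2}$, which is the ``otherwise'' branch. By the result recalled immediately before the corollary, every combinatorial $(n,k)$-hypersimplex with $n\ge 6$ and $k$ in this interval is $FG$-generic, hence extension maximal by Theorem~\ref{thm:main_generic}, so $\rkN(P)=2n$. Nothing more is needed here.

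Now assume $2\le k<\ffloor{n}{2}$, the substantive case. A combinatorial $(n,k)$-hypersimplex in this range need not be $FG$-generic, so the plan is to fall back on one of the two one-sided genericity conditions (the paper's $F$- and $G$-genericity) entering the proof of Theorem~\ref{thm:main_generic}. There are two steps. The first is to show, from the incidence structure alone, that every combinatorial $(n,k)$-hypersimplex with $2\le k\le\fceil{n}{2}$ automatically satisfies the one-sided condition governing the $n$ facets $\{x_i=0\}$ --- this is where the hypothesis $k\le\fceil{n}{2}$ is used. The second is to rerun the rectangle covering argument behind Theorem~\ref{thm:main_generic} keeping only this weaker hypothesis: it should yield $n+2k+1$ pairwise incompatible covering obstructions --- roughly, $n$ of them in bijection with the facets $\{x_i=0\}$, and a further family of $2k+1$ built from the facets $\{x_j=1\}$ (each combinatorially an $(n-1,k-1)$-hypersimplex) together with the $\binom nk$ vertices of $P$. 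Since no rectangle can absorb two obstructions from this list, one gets $\rkN(P)\ge\rrc(P)\ge n+2k+1$.

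The main obstacle is the second step of the second case: one must extract from the definition of $FG$-genericity exactly the one-sided sub-condition that remains valid for all combinatorial $(n,k)$-hypersimplices with $k\le\fceil{n}{2}$, redo the covering bound using it alone, and carry out the bookkeeping that makes the obstruction count come out as $n+2k+1$ --- neither fewer, nor (when $k<\ffloor{n}{2}$) all of $2n$, in accordance with the suspicion voiced after Question~\ref{quest:comb} that some combinatorial $(n,2)$-hypersimplices fail to be extension maximal. As a closing remark one notes $n+2k+1\le 2n$ for all $k<\ffloor{n}{2}$, so that the two branches assemble into the single inequality stated.
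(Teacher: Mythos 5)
Your treatment of the ``otherwise'' branch ($\ffloor{n}{2}\le k\le\fceil{n}{2}$) is correct and matches the paper: Lemma~\ref{lem:generic} forces $FG$-genericity in this range, and Theorem~\ref{thm:main_generic} then gives $\rkN(P)=2n$. The problem is the other branch, $2\le k<\ffloor{n}{2}$, where you sketch a plan rather than a proof and the plan itself looks unworkable. First, you misremember the mechanism of Theorem~\ref{thm:main_generic}: it is proved by the geometric argument of Proposition~\ref{prop:large_cases} (bounding the polyhedron cut out by the $G$-inequalities and counting facets of an extension), not by a rectangle covering argument; the paper only invokes rectangle coverings for the small base cases in Proposition~\ref{prop:small_cases}. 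Second, and more seriously, an $\rrc$-based strategy is unlikely to deliver this bound uniformly over all combinatorial $(n,k)$-hypersimplices. The ordinary rectangle covering number is a combinatorial invariant, and Proposition~\ref{prop:rcbounds} together with the computation $\rc(\Delta_{10,2})\le 19$ show it can fall short of $2n$; the refined version $\rrc$ depends on the actual matrix entries, so it is not a function of the combinatorial type and cannot by itself certify a bound ``for every combinatorial realization.'' You explicitly flag this gap (``the main obstacle\dots''), so the argument is not completed.

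The missing idea in the paper is much simpler and avoids rectangle coverings entirely. If $k<\ffloor{n}{2}$, repeatedly pass to $F$-facets (each an $(n-1,k)$-hypersimplex, cf.~\eqref{eqn:FG} and Figure~\ref{fig:hypersimplex_chart}) until you reach a $2k$-dimensional face $Q$ of $P$ that is a combinatorial $(2k+1,k)$-hypersimplex. Now $Q$ sits squarely in the balanced range, so by the already-proved branch $\rkN(Q)=2(2k+1)$. Then Corollary~\ref{cor:face} gives
\[
\rkN(P)\ \ge\ \rkN(Q)+\dim P-\dim Q\ =\ 2(2k+1)+(n-1)-2k\ =\ n+2k+1.
\]
The case $k>\fceil{n}{2}$ is symmetric via $G$-facets. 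So rather than constructing fresh covering obstructions, the paper reuses the balanced case on a face and propagates the bound up one dimension per facet step via Lemma~\ref{lem:facet}/Corollary~\ref{cor:face}.
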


We do not know of any realization of a $(n,k)$-hypersimplex with $n \ge 6$ of
extension complexity less than $2n$, but we do not dare to conjecture that
every combinatorial $(n,k)$-hypersimplex with $n \ge 6$ and $2 \le k \le n$ is
extension maximal.

\subsection{Realization spaces of hypersimplices}
The \Defn{projective realization space} $\Rel_{n,k}$ of combinatorial
$(n,k)$-hypersimplices parametrizes the polytopes combinatorially isomorphic
to $\Delta_{n,k}$ up to projective transformation. (Projective) realization
spaces of polytopes are provably complicated objects. The \emph{universality
theorems} of Mn\"ev~\cite{Mnev1988} and
Richter-Gebert~\cite{RichterGebert1997} assert that realization spaces of
polytopes of dimension $\ge 4$ are as complicated as basic open semialgebraic
sets defined over the integers. 
In contrast, for a $3$-dimensional polytope~$P$ with $e \ge 9$ edges, it
follows from Steinitz' theorem that the projective realization space is
homeomorphic to an open ball of dimension $e - 9$; see
also~\cite[Thm.~13.3.3]{RichterGebert1997}. 

For our investigation of the extension complexity of combinatorial
hypersimplices, we study their realization spaces.  The observation that every
hypersimplex is either $F$- or $G$-generic (Lemma~\ref{lem:generic}) turns out
to be instrumental in our study. For $k=2$, we are able to give a full
description.

\begin{thm}\label{thm:dim_rel_space}
    For $n \geq 4$, $\Rel_{n,2}$ is {rationally} equivalent to the
    interior of a $\binom{n-1}{2}$-dimensional cube. In particular,
    $\Rel_{n,2}$ is homeomorphic to an open ball and hence contractible. 
\end{thm}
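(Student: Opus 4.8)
The plan is to exhibit an explicit parametrization of $\Rel_{n,2}$ by a product of open intervals. The combinatorial structure of $\Delta_{n,2}$ is that of the matroid base polytope of $U_{n,2}$: its facets come in two families, the $n$ ``cube facets'' $\{x_i = 0\}$ (equivalently $x_i = 0$ means vertex $e_j + e_l$ with $i \notin \{j,l\}$, so these are the facets $G_i$ containing all $e_j + e_l$ with $i \notin \{j,l\}$) and, for $n \ge 5$, the $n$ facets $\{x_i = 1\}$, i.e.\ the simplex-facets $F_i$ spanned by $\{e_i + e_l : l \ne i\}$. (For $n=4$ the facets $\{x_i=1\}$ and $\{x_j = 0\}$ coincide when $\{i,j\}$ is a complementary pair, which must be treated as a special small case, but the dimension count $\binom{3}{2} = 3$ and the known fact that $\Delta_{4,2}$ is a regular octahedron whose realization space is an open $3$-ball fits the statement.) I would first record this facet description and note that, by Lemma~\ref{lem:generic}, every realization is $F$- or $G$-generic, so one of the two families of facet hyperplanes is in general position; this is what makes an affine normalization possible.

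Next, I would use projective transformations to put a representative of each class into a normal form. Using the $F$-generic picture (the hyperplanes $F_1,\dots,F_n$ in general position), I can apply a projective transformation sending these $n$ hyperplanes to the $n$ coordinate hyperplanes plus one more, i.e.\ normalizing so that the vertices $e_i + e_j$ of the realized polytope $P$ are scalar multiples $\lambda_{ij}(e_i + e_j)$ for positive reals $\lambda_{ij}$, with the further freedom of a diagonal torus action rescaling the coordinate axes. Concretely, I would fix a spanning tree's worth of these vertices (say all $\lambda_{1j}$) to be $1$ using the torus action, and then the remaining data is the collection $\{\lambda_{ij} : 2 \le i < j \le n\}$, which is exactly $\binom{n-1}{2}$ parameters. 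The main work is then to show: (i) the combinatorial type is preserved precisely when each $\lambda_{ij}$ lies in an open interval (after the normalization, one should get a condition like $0 < \lambda_{ij} < \infty$ or $\lambda_{ij}$ in some interval cut out by the linear-independence/facet conditions, which can be rescaled to $(0,1)$), and (ii) this assignment $P \mapsto (\lambda_{ij})$ is a well-defined bijection onto that open box with rational inverse, i.e.\ a rational equivalence. Checking that the $G$-facets (the $\{x_i \ge 0\}$-type inequalities, now images under the normalization) remain valid and facet-defining exactly on the open box is the crunch of the argument: one writes down the supporting hyperplane of each such facet as a function of the $\lambda_{ij}$ (it is determined by an appropriate subset of the normalized vertices) and verifies the strict inequalities defining ``the remaining vertices are on the correct side'' are equivalent to the box conditions.

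I expect the main obstacle to be the bookkeeping in step~(i)–(ii): verifying that the semialgebraic conditions cutting out $\Rel_{n,2}$ inside the space of normalized vertex configurations really do collapse to independent one-variable interval conditions, with no residual coupling between the $\lambda_{ij}$. This is plausible because the $G$-facet through a given family of vertices $\{\lambda_{ij}(e_i+e_j)\}$ turns out to involve only a limited set of the parameters, so each facet-validity inequality isolates one coordinate; but making this precise requires a careful choice of which vertices pin down which facet hyperplane. Once the box description is in hand, the final sentence of the theorem is immediate: an open box is homeomorphic (indeed rationally equivalent) to an open ball, hence contractible. I would also remark that $F$- versus $G$-genericity gives two a priori different charts, and one must check they cover $\Rel_{n,2}$ and are compatible; for $k=2$ the symmetry $\Delta_{n,2}\cong\Delta_{n,n-2}$ and the self-dual flavor of the $F$/$G$ families should make this routine, and in fact the cleanest route may be to do the whole argument on the $G$-generic side (cube-facets in general position), which is the side that directly generalizes the cube's own realization space.
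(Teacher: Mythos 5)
Your plan is essentially the same as the paper's: normalize so that the $F$-facets bound the standard simplex $\Delta_{n-1}$ (justified by Lemma~\ref{lem:generic}), observe that each vertex of the realization must then lie in the relative interior of a unique edge of $\Delta_{n-1}$, and factor out the residual projective freedom to reduce the parameters from $\binom{n}{2}$ to $\binom{n-1}{2}$. However, there is a genuine gap where you explicitly flag one: you never establish that \emph{every} choice of interior edge points gives a combinatorial $(n,2)$-hypersimplex, which is exactly what makes the parametrizing set a full open box with no residual coupling. Without this, the normalized configurations could cut out a proper semialgebraic subset of the box and the theorem would not follow. The paper closes this gap with Theorem~\ref{thm:simplex_edges}, whose forward direction uses a clean truncation argument: since $\Delta_{n-1}$ is simple, $\conv\{p_{ij}\}$ is obtained by truncating each vertex $e_i$ by the hyperplane through $\{p_{ij} : j\neq i\}$, and an induction on $n$ shows the resulting combinatorics is always that of $\Delta_{n,2}$ regardless of where on the edges the cut planes pass. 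You would need to supply this (or an equivalent) argument.

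Two smaller points. First, your normalization ``vertices $\lambda_{ij}(e_i+e_j)$'' is not quite right: after normalizing the $F$-facets to coordinate hyperplanes, the vertex rays of the homogenization have the form $a\,e_i + b\,e_j$ with independent $a,b>0$, and the invariant datum is the ratio; the paper makes this precise via the $G$-matrix~\eqref{eq:MatDeltan2} with entries $\rho_{ij}$, the relation $\rho_{ij}=\rho_{ji}^{-1}$, and the reduction to~\eqref{eq:ProjDeltan2}. Second, your aside for $n=4$ is incorrect: the facets $\{x_i=1\}$ and $\{x_j=0\}$ of $\Delta_{4,2}$ never coincide (they are eight distinct triangular facets, giving the octahedron), so $n=4$ needs no special treatment and falls under the same truncation argument. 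The paper's route via the prism $\conv\{p_{12},\dots,p_{1n},e_2,\dots,e_n\}$ being projectively unique is also a cleaner way to kill the remaining projective freedom than the spanning-tree torus normalization you sketch, though the two are equivalent.
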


Rationally equivalent means that the homeomorphism as well as its inverse are
given by rational functions (c.f.~\cite[Sect.~2.5]{RichterGebert1997}).

A key tool in the context of the Universality Theorem is that the
projective realization of a facet of a high-dimensional polytope can not be
prescribed in general; see, for example,~\cite[Sect.~6.5]{Ziegler1995}. In contrast,
the shape of any single facet of a $3$-polytope can be prescribed~\cite{BarnetteGruenbaum1970}.
This description of $\Rel_{n,2}$ allows us to show that facets
of $(n,2)$-hypersimplices can be prescribed (Corollary~\ref{cor:n2prescribability}), but also allows us to
construct hypersimplices that are not
$FG$-generic, which implies that facets of hypersimplices cannot be prescribed
 in general (Corollary~\ref{cor:prescribability}).

For $2 < k < n-2$, the realization spaces are more involved and, in
particular, related to the algebraic variety of $n$-by-$n$ matrices with
vanishing principal $k$-minors that was studied by Wheeler~\cite{Wheeler15}.
In Theorem~\ref{thm:Rel_UB}, we show that certain facets of $\Delta_{n,k}$
completely determine the realization, which then gives an upper bound on the
dimension of the realization space.  However, we can currently not exclude
that $\Rel_{n,k}$ is disconnected and has components of different dimensions.

The extension complexity is invariant under (admissible) projective
transformations and hence $\rkN$ is well-defined on $\Rel_{n,k}$. The locus
$E_{n,k} \subseteq \Rel_{n,k}$ of extension maximal $(n,k)$-hypersimplices is
open and Theorem~\ref{thm:main} implies that $E_{n,k}$ is non-empty for $n \ge
6$ and $2 \le k \le n-2$. For $k=2$, we can say considerably more.

\begin{cor}\label{cor:dense2}
    For $n \ge 5$, the combinatorial $(n,2)$-hypersimplices with extension
    complexity $2n$ are dense in $\Rel_{n,2}$.
\end{cor}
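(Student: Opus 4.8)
The plan is to combine the explicit description of the realization space from Theorem~\ref{thm:dim_rel_space} with the fact that the extension-maximal locus $E_{n,2}$ is open. Concretely, Theorem~\ref{thm:dim_rel_space} identifies $\Rel_{n,2}$ with the interior of a $\binom{n-1}{2}$-dimensional cube via a rational equivalence, so it is an irreducible rational variety (in particular, connected and irreducible as a semialgebraic set). Since $E_{n,2}$ is open in $\Rel_{n,2}$, to prove density it suffices to prove that $E_{n,2}$ is nonempty: a nonempty open subset of an irreducible variety is automatically dense. Thus the corollary reduces to exhibiting, for every $n\ge 5$, at least one combinatorial $(n,2)$-hypersimplex with extension complexity exactly $2n$.

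For $n \ge 6$, the standard hypersimplex $\Delta_{n,2}$ itself has extension complexity $2n$ by Theorem~\ref{thm:main}, so $\Delta_{n,2} \in E_{n,2}$ and we are done in that range. The only remaining case is $n = 5$, where Theorem~\ref{thm:main} gives $\rkN(\Delta_{5,2}) = 9 < 10 = 2n$, so the standard realization is \emph{not} extension maximal. Here I would instead invoke the genericity machinery: by the discussion preceding Corollary~\ref{cor:XCcombinatorial}, hypersimplices with $\lfloor\frac{n}{2}\rfloor \le k \le \lceil\frac{n}{2}\rceil$ are $FG$-generic, and all $FG$-generic hypersimplices are extension maximal (Theorem~\ref{thm:main_generic}). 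For $n=5$ and $k=2$ we have $\lfloor\frac52\rfloor = 2$, so \emph{some} realization of the combinatorial $(5,2)$-hypersimplex is $FG$-generic, hence extension maximal; this shows $E_{5,2} \ne \emptyset$. Alternatively one can argue directly that a generic perturbation of $\Delta_{5,2}$ inside $\Rel_{5,2}$ attains the maximal rectangle covering number $10$, using Proposition~\ref{prop:rcbounds}-type estimates for the refined rectangle covering number.

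The main obstacle is the case $n=5$: the density statement there is not a formal consequence of Theorem~\ref{thm:main}, and one genuinely needs the realization-space picture together with either the $FG$-genericity result or a direct rectangle-covering computation on a perturbed realization. Once nonemptiness of $E_{n,2}$ is secured for all $n \ge 5$, density follows purely formally from openness of $E_{n,2}$ and irreducibility (equivalently, the connectedness and full-dimensionality) of $\Rel_{n,2}$ supplied by Theorem~\ref{thm:dim_rel_space}, since a nonempty open subset of the interior of a cube is dense in it. \qed
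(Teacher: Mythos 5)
There are two genuine gaps in your argument, one at each step.

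First, for $n=5$ you invoke Theorem~\ref{thm:main_generic} to conclude that an $FG$-generic $(5,2)$-hypersimplex is extension maximal, but that theorem explicitly requires $n\geq 6$. In fact Lemma~\ref{lem:generic} shows that \emph{every} combinatorial $(5,2)$-hypersimplex is $FG$-generic (since $n-2 < 2k < n+2$ holds for $n=5,\,k=2$), and yet Proposition~\ref{prop:small_cases} gives $\rkN(\Delta_{5,2}) = 9 < 10$. So $FG$-genericity cannot imply extension maximality here; your argument would prove the false statement that $\rkN(\Delta_{5,2})=10$. Your fallback --- ``use Proposition~\ref{prop:rcbounds}-type estimates for the refined rectangle covering number'' --- also does not work as stated: the ordinary rectangle covering number $\rc$ is a combinatorial invariant, and since $\rc(\Delta_{5,2}) = 9$ it equals $9$ on all of $\Rel_{5,2}$, while Proposition~\ref{prop:rcbounds} only bounds $\rc$, not $\rrc$. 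The paper actually handles $n=5$ in Theorem~\ref{thm:52} by exhibiting a specific realization~\eqref{eqn:52special}, running a computer-aided computation of the \emph{generic refined} rectangle covering number, and then observing that the defining inequalities~\eqref{eqn:grrc} cut out an algebraic subset of $\Rel_{5,2}$ that is proper (since that realization lies outside it) and hence of measure zero. None of this is a formal consequence of the tools you cite.

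Second, the reduction ``nonempty $+$ open $+$ irreducible $\Rightarrow$ dense'' is incorrect in the topology you are using. $E_{n,2}$ is Euclidean-open, and a nonempty Euclidean-open subset of a connected (even irreducible) real variety need not be dense --- the interval $(0,1)\subset\R$ is the obvious counterexample. The irreducibility heuristic only applies to Zariski-open subsets, and you have not argued that $E_{n,2}$ is Zariski-open. The paper's proof for $n\geq 6$ sidesteps this: it identifies an explicit dense open subset, namely the $FG$-generic locus, which is dense because $G$-genericity is the non-vanishing of a determinant (Lemma~\ref{lem:G_generic} applied to~\eqref{eq:ProjDeltan2}), i.e., a polynomial non-vanishing locus on the cube parametrizing $\Rel_{n,2}$; this locus is contained in $E_{n,2}$ by Theorem~\ref{thm:main_generic}. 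Your use of Theorem~\ref{thm:main} to get a single extension-maximal point is not needed and does not supply density.
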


Our results on $FG$-generic hypersimplices, which are characterized by the
non-vanishing of a determinantal condition on $\Rel_{n,k}$, strongly suggest
that Corollary~\ref{cor:dense2} extends to all the cases.

\begin{conj}\label{conj:dense}
    For $n\geq 5$ and $2\leq k\leq n-2$, the combinatorial hypersimplices of
    nonnegative rank~$2n$ form a dense open subset of $\Rel_{n,k}$.
\end{conj}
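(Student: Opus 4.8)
The set $E_{n,k}\subseteq\Rel_{n,k}$ of extension maximal combinatorial $(n,k)$-hypersimplices is open, so the only thing to prove is that $E_{n,k}$ is dense. Since every $FG$-generic hypersimplex is extension maximal (Theorem~\ref{thm:main_generic}), the plan is to instead prove the (a priori stronger) statement that the locus $U_{n,k}\subseteq\Rel_{n,k}$ of $FG$-generic realizations is dense. By definition $U_{n,k}$ is the non-vanishing locus of a single determinantal polynomial, hence it is Zariski-open in $\Rel_{n,k}$; it is dense precisely when this determinant vanishes on no irreducible component of $\Rel_{n,k}$. The two sub-tasks are therefore: (i) determine the irreducible components of $\Rel_{n,k}$, and (ii) exhibit one $FG$-generic realization on each of them.

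For $k=2$ both tasks are already settled. Theorem~\ref{thm:dim_rel_space} identifies $\Rel_{n,2}$ with the interior of a cube, so it is irreducible, and Corollary~\ref{cor:dense2} is exactly the conjecture in this case. Note that here $U_{n,2}$ is a proper subset of $\Rel_{n,2}$ by Proposition~\ref{prop:singular62}, so one genuinely needs the open dense $FG$-generic locus rather than all of $\Rel_{n,2}$. For $\lfloor n/2\rfloor\le k\le\lceil n/2\rceil$ the standard hypersimplex $\Delta_{n,k}$ is $FG$-generic (as recorded before Corollary~\ref{cor:XCcombinatorial}), so $U_{n,k}\neq\emptyset$ and only the irreducibility of $\Rel_{n,k}$ would remain.

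For the remaining values of $k$ I would attack the component structure from the algebraic side. By Theorem~\ref{thm:Rel_UB} a realization of $\Delta_{n,k}$ is pinned down by a distinguished family of facets, which ties $\Rel_{n,k}$ to Wheeler's variety of $n\times n$ matrices with vanishing principal $k$-minors; the hope is to inherit irreducibility, or at least an explicit list of components, from results about that variety. In parallel one can try an inductive construction, building a realization of $\Delta_{n,k}$ by lifting compatible realizations of its facet hypersimplices $\Delta_{n-1,k}$ and $\Delta_{n-1,k-1}$, perturbing to make the $FG$-genericity determinant nonzero, and verifying that the resulting family is dense; this would mirror, at the level of realization spaces, the construction of $2$-level matroids from uniform ones by direct and $2$-sums.

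The hard part is exactly the obstruction flagged in the paper: $\Rel_{n,k}$ may be reducible with components of differing dimension, and one must rule out that the $FG$-genericity determinant is identically zero on some such component — controlling a determinantal condition on each piece of an only partially understood variety is the crux. A secondary difficulty, specific to $k=2$ and $k=n-2$ where $E_{n,k}$ need not equal all of $\Rel_{n,k}$, is that density must be argued through the $FG$-generic (or an explicit rational) parametrization rather than by a dimension count, since isolated or lower-dimensional strata of non-extension-maximal realizations are not a priori excluded.
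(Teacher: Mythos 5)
This statement is a \emph{conjecture}, and the paper does not prove it; the closing remarks of Section~\ref{sec:52} explain precisely why it remains open. Your reduction mirrors that discussion: openness of the nonnegative rank $2n$ locus from Theorem~\ref{thm:main_generic}, density via density of the $FG$-generic locus, and the component structure of $\Rel_{n,k}$ as the outstanding obstruction. You correctly identify that the $k=2$ case is settled by Corollary~\ref{cor:dense2}.

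One small point you under-sell: for $\lfloor n/2\rfloor\le k\le\lceil n/2\rceil$, Lemma~\ref{lem:generic} shows that \emph{every} combinatorial $(n,k)$-hypersimplex is $FG$-generic, not merely the standard one. Thus the locus of nonnegative rank $2n$ is all of $\Rel_{n,k}$ in that range, and density is immediate with no appeal to irreducibility. Your write-up suggests irreducibility would still be needed there, which it is not.

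For the remaining $k$, your diagnosis of the crux — that $\Rel_{n,k}$ is tied via Theorem~\ref{thm:Rel_UB} to Wheeler's variety of vanishing principal $k$-minors, which can be reducible with components of differing dimensions, so the $G$-genericity determinant of Lemma~\ref{lem:G_generic} could vanish identically on some component — is exactly the gap the authors flag and do not close. Your inductive-lifting alternative (building from facet hypersimplices and perturbing) is a plausible direction but is not carried out and would need a lifting/perturbation lemma that respects the combinatorial type, which the paper does not supply either. In short, your proposal is a sound reduction and a correct identification of where the argument stalls, but it is not a proof, and the paper offers none.
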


\subsection{Structure of the paper}
Theorem~\ref{thm:main} is proved in Sections~\ref{sec:geom} and~\ref{sec:rc}.
In Section~\ref{sec:geom} we investigate the discrete geometry of extensions
and we set up an induction that deals with the \emph{large} hypersimplices
$\Delta_{n,k}$ with $n > 6$. In particular, we devise general tools for upper
bounding the extension complexity. For the \emph{small} hypersimplices
$\Delta_{6,2}$ and $\Delta_{6,3}$, we make use of rectangle covering numbers
in Section~\ref{sec:rc}. We show that most of the geometric tools of
Section~\ref{sec:geom} have combinatorial counterparts for rectangle covering
numbers. Section~\ref{sec:relspaces} is devoted to the study of combinatorial
hypersimplices and the associated realization spaces. In Section~\ref{sec:n2}
we focus on the combinatorial $(n,2)$-hypersimplices.

\section{The geometry of extensions and large hypersimplices}\label{sec:geom}

In this section we develop some useful tools pertaining to the geometry of
extensions.  These will be used to give an inductive argument for the
\emph{large} hypersimplices $\Delta_{n,k}$ with $n > 6$ and $1 < k < n-1$. The
\emph{small} hypersimplices are treated in the next section.

For a polytope~$P$, we write~$v(P)$ for the number of vertices of~$P$
and~$f(P)$ for the number of facets. Moreover, $\ext{P}$ will typically denote
an extension of~$P$, and the linear projection that takes~$\ext{P}$ to~$P$
is denoted by~$\pi$. We start with the simple observation that the nonnegative rank is
strictly monotone with respect to taking faces.

\begin{lem}\label{lem:facet}
    Let $P$ be a polytope and $F \subset P$ a facet. Then
    \[
        \rkN(P) \ \ge \ \rkN(F) + 1.
    \]
\end{lem}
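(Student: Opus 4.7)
The plan is to take an optimal extension $\pi\colon \ext{P} \to P$ with $f(\ext{P}) = \rkN(P)$, and to exhibit a face of $\ext{P}$ that serves as an extension of $F$ and that is missed by at least one facet of~$\ext{P}$. The strict inequality will then come from that single "missing" facet.

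First I would write $F = P \cap H$ where $H = \{x : \langle a,x\rangle = b\}$ is a supporting hyperplane of $P$. Because $\pi$ is affine, $\tilde{H} := \pi^{-1}(H) = \{y : \langle a,\pi(y)\rangle = b\}$ is an affine hyperplane in the ambient space of $\ext{P}$, and $\ext{F} := \ext{P} \cap \tilde{H}$ is therefore a face of $\ext{P}$. A short check shows $\pi(\ext{F}) = F$: the inclusion $\pi(\ext{F}) \subseteq F$ is immediate, while for $x \in F$ any preimage $y \in \ext{P}$ automatically satisfies $\langle a,\pi(y)\rangle = b$, so $y \in \ext{F}$. Hence $\pi|_{\ext{F}}$ realizes $\ext{F}$ as an extension of $F$, giving $f(\ext{F}) \ge \rkN(F)$.

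Second, I would compare $f(\ext{F})$ with $f(\ext{P})$. Every facet of $\ext{F}$ is the intersection of $\ext{F}$ with a facet of $\ext{P}$ that does not contain $\ext{F}$, so
\[
    f(\ext{F}) \ \le \ f(\ext{P}) - \#\{\text{facets of } \ext{P} \text{ containing } \ext{F}\}.
\]
The key point is that $\ext{F}$ is a \emph{proper} face of $\ext{P}$: if $\ext{P}$ were contained in $\tilde{H}$, then $P = \pi(\ext{P})$ would lie in $H$, contradicting that $F$ is a facet (and thus a proper face) of $P$. Therefore at least one facet of $\ext{P}$ contains $\ext{F}$, yielding $f(\ext{F}) \le f(\ext{P}) - 1$. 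Chaining the inequalities gives $\rkN(F) + 1 \le f(\ext{F}) + 1 \le f(\ext{P}) = \rkN(P)$.

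There is no serious obstacle here; the statement is essentially a clean bookkeeping argument about faces of extensions. The only subtle point worth double-checking is that the face of $\ext{P}$ selected by pulling back the supporting hyperplane of $F$ is genuinely proper, which is what supplies the $+1$. Everything else is just the observation that faces of extensions are extensions of faces, together with the elementary count that a proper face is contained in at least one facet.
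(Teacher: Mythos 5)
Your proof is correct and follows essentially the same route as the paper's: pull back $F$ to a proper face $\ext{F}$ of a minimal extension $\ext{P}$, note that $\ext{F}$ is itself an extension of $F$ whose facets all arise from facets of $\ext{P}$ not containing $\ext{F}$, and gain the $+1$ from the at least one facet of $\ext{P}$ that does contain the proper face $\ext{F}$. The only difference is that you spell out why $\pi^{-1}(F)\cap\ext{P}$ is a face and surjects onto $F$, which the paper leaves implicit.
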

\begin{proof}
    Let $\ext{P}$ be a minimal extension of $P$. The preimage $\ext{F} =
    \pi^{-1}(F) \cap \ext{P}$ is an extension of~$F$. Every facet of $\ext{F}$
    is the intersection of a facet of $\ext{P}$ with $\ext{F}$. Moreover,
    since $\ext{F}$ is a proper face of $\ext{P}$, there are at least $c \ge
    1$ facets of $\ext{P}$ that contain $\ext{F}$ and hence do not contribute
    facets to $\ext{F}$. It follows that
    \[
        \rkN(P)  \ = \ f(\ext{P}) \ \ge \ f(\ext{F}) + c \ \ge \ \rkN(F) + 1,
    \]
    which proves the claim.
\end{proof}

By induction, this extends to lower dimensional faces.
\begin{cor}\label{cor:face}
    Let $P$ be a polytope and $F \subset P$ a face. Then
    \[
        \rkN(P) \ \ge \ \rkN(F) + \dim(P) - \dim(F).
    \]
\end{cor}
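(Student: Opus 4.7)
The plan is to prove the statement by induction on the codimension $c := \dim(P) - \dim(F)$, repeatedly applying Lemma~\ref{lem:facet}. For $c = 0$ we have $F = P$ and the inequality is trivial, while for $c = 1$ the face $F$ is a facet of $P$ and the claim is exactly Lemma~\ref{lem:facet}.

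For the inductive step, suppose $c \ge 2$ and the corollary holds for all pairs of face-polytope with smaller codimension. The key geometric fact I would use is that the face lattice of a polytope is graded: for any face $F \subsetneq P$ of dimension $d < \dim(P)$, there exists a face $F' \subseteq P$ with $\dim(F') = d+1$ and $F \subseteq F'$. (One way to see this: take any facet $G$ of $P$ containing $F$; then $F$ is a face of the polytope $G$, and inside $G$ one can iterate facet-containment up to $F'$ of dimension $d+1$.) Given such an $F'$, the face $F$ is a facet of $F'$, so Lemma~\ref{lem:facet} yields
\[
    \rkN(F') \ \ge \ \rkN(F) + 1.
\]
Moreover, $F'$ is a face of $P$ of codimension $c-1$, so by the induction hypothesis applied to $F' \subset P$,
\[
    \rkN(P) \ \ge \ \rkN(F') + (c-1).
\]
Chaining these two inequalities gives $\rkN(P) \ge \rkN(F) + c = \rkN(F) + \dim(P) - \dim(F)$, as required.

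There is no real obstacle here; the argument is essentially a telescoping of Lemma~\ref{lem:facet} along a maximal chain $F = F_0 \subsetneq F_1 \subsetneq \cdots \subsetneq F_c = P$ in the face lattice with $\dim(F_i) = \dim(F) + i$. The only point worth making explicit is the existence of such a chain, which is the standard fact that the face lattice of a polytope is graded by dimension.
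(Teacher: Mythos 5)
Your proof is correct and is exactly the induction the paper has in mind: the paper dispatches this corollary with the single remark ``By induction, this extends to lower dimensional faces,'' i.e.\ telescoping Lemma~\ref{lem:facet} along a saturated chain of faces. Your explicit mention of the gradedness of the face lattice is a fine (and standard) justification of the step the paper leaves implicit.
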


We can strengthen this observation if we take into consideration more than one
facet.

\begin{lem}\label{lem:2distinct}
    Let $P$ be a polytope and let $F_1$ and $F_2$ be two disjoint facets of
    $P$. Then
    \[
        \rkN(P) \ \ge \ \min\left\{\rkN(F_1),\rkN(F_2)\right\} + 2.
    \]
\end{lem}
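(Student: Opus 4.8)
The plan is to refine the proof of Lemma~\ref{lem:facet}, extracting a second unit from the fact that we now have \emph{two} disjoint facets at our disposal. First I would fix a minimal extension $\ext{P}$ with projection $\pi$, so that $\rkN(P) = f(\ext{P})$, and set $\ext{F}_i := \pi^{-1}(F_i) \cap \ext{P}$ for $i = 1,2$. Exactly as in Lemma~\ref{lem:facet}, each $\ext{F}_i$ is a nonempty proper face of $\ext{P}$ and is an extension of $F_i$, hence $f(\ext{F}_i) \ge \rkN(F_i)$; moreover $\ext{F}_1 \cap \ext{F}_2 = \emptyset$ since $\pi(\ext{F}_i) = F_i$ and $F_1 \cap F_2 = \emptyset$. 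It then suffices to prove the combinatorial inequality
\[
    \min\{f(\ext{F}_1),\, f(\ext{F}_2)\} \ \le \ f(\ext{P}) - 2,
\]
because together with $f(\ext{F}_i) \ge \rkN(F_i)$ this yields $\rkN(P) = f(\ext{P}) \ge \min\{\rkN(F_1),\rkN(F_2)\} + 2$.

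To prove this inequality I would split on the number $c_i \ge 1$ of facets of $\ext{P}$ containing $\ext{F}_i$. If $c_i \ge 2$ for some $i$, then the counting argument of Lemma~\ref{lem:facet} already gives $f(\ext{F}_i) \le f(\ext{P}) - c_i \le f(\ext{P}) - 2$, since every facet of $\ext{F}_i$ is cut out by a facet of $\ext{P}$ not containing $\ext{F}_i$. Otherwise $c_1 = c_2 = 1$; since a nonempty proper face of a polytope equals the intersection of the facets containing it, $\ext{F}_i$ then coincides with the unique facet $G_i$ of $\ext{P}$ that contains it. Now $G_1$ and $G_2$ are disjoint facets of $\ext{P}$, and the facets of $G_1$ are among the sets $G_1 \cap H$ with $H$ a facet of $\ext{P}$ different from $G_1$; the choice $H = G_2$ contributes nothing because $G_1 \cap G_2 = \emptyset$, so $f(\ext{F}_1) = f(G_1) \le f(\ext{P}) - 2$, as desired.

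I expect the only genuine obstacle to be the case $c_1 = c_2 = 1$: one has to notice that the preimages $\ext{F}_i$ are then forced to be facets of the extension, and that it is precisely their disjointness that upgrades the ``$-1$'' of Lemma~\ref{lem:facet} to a ``$-2$''. The rest is routine bookkeeping with the standard facts that a proper face is the intersection of the facets containing it and that the facets of a facet $G$ of $\ext{P}$ arise as intersections of $G$ with the other facets of $\ext{P}$.
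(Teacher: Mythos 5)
Your proof is correct and follows essentially the same route as the paper's: reduce via the facet-count $c_i$ to the case $c_1=c_2=1$, observe that the preimages are then disjoint facets of $\ext{P}$, and extract the extra ``$+2$'' from the fact that neither can contribute a facet to the other. The only (harmless) difference is that you bound $\min\{f(\ext{F}_1),f(\ext{F}_2)\}$ directly instead of first reducing to $\rkN(F_1)=\rkN(F_2)$ as the paper does.
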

\begin{proof}
    If $\rkN(F_1) > \rkN(F_2)$, the claim follows from Lemma~\ref{lem:facet}.
    Hence, we can assume that $\rkN(F_1) = \rkN(F_2) = k$.  Extending the
    argument of Lemma~\ref{lem:facet}, let $\ext{P}$ be a minimal extension of
    $P$ and $\ext{F}_i$ the preimage of $F_i$ for $i=1,2$. Let $c_i$ be the
    number of facets of $\ext{P}$ containing $\ext{F}_i$. Since $f(\ext{P}) \ge
    k + c_i$, the relevant case is $c_1 = c_2 = 1$.  Now, $\pi(\ext{F}_1
    \cap \ext{F}_2) \subseteq F_1 \cap F_2 = \emptyset$ implies that
    $\ext{F}_1$ and $\ext{F}_2$ are disjoint facets of $\ext{P}$. Hence,
    $\rkN(P)  = f(\ext{P}) \ \ge \ k + 2$.
\end{proof}

We cannot replace $\min$ with $\max$ in Lemma~\ref{lem:2distinct}: The convex
hull of the $12$ columns of the matrix
\[
    \left(
    \begin{array}{rrrrrrrrrrrr}
        1 & -1 & -1 &  1 &  2 &  2 & -2 & -2 &  1 & -1 &  1 & -1 \\
        2 &  2 & -2 & -2 &  1 & -1 & -1 &  1 &  1 &  1 & -1 & -1 \\
        1 &  1 &  1 &  1 &  1 &  1 &  1 &  1 & -1 & -1 & -1 & -1 \\
        \hline
        1 &  1 &  1 &  1 & -1 & -1 & -1 & -1 & -1 & -1 & -1 & -1
    \end{array}
    \right)
\]
gives rise to a $4$-dimensional polytope $Q$ combinatorially isomorphic to
product of a triangle and a quadrilateral, and consequently has $7$ facets. 
If we project onto the first three coordinates we obtain a $3$-dimensional
polytope $P$ with two parallel facets, $F_1$ and $F_2$, that are an octagon
and a square, with nonnegative ranks $6$ and $4$, respectively. Thus,
$\rkN(P) \le 7 < \max\{\rkN(F_1),\rkN(F_2)\} + 2 = 8$. Figure~\ref{fig:sqoct}
gives an idea of the geometry underlying $Q$.

\begin{figure}[htbp]
\includegraphics[width=.7\linewidth]{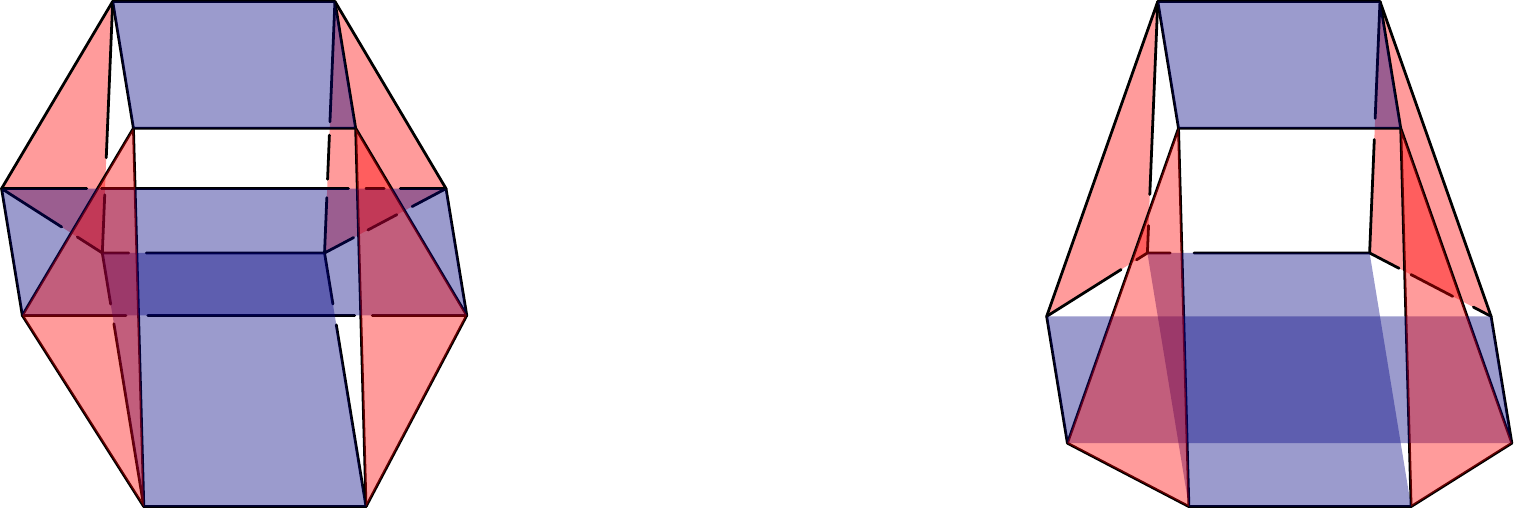}
\caption{ The left figures gives a sketch (not a Schlegel diagram) of the
geometric idea underlying the construction of $Q$. It is a union of three
facets that yield the projection on the right.  We highlighted the structure
as a product of polygons, that makes it more visible how the two square faces
of $Q$ yield the octogonal face of $P$.}
\label{fig:sqoct}
\end{figure}

Combining Lemma~\ref{lem:facet} and Lemma~\ref{lem:2distinct} yields the
following result pertaining to Conjecture~\ref{conj:prod}.
\begin{cor}\label{cor:prod_simplex}
    Let $P$ be a non-empty convex polytope and $k \ge 1$. Then
    \[
        \rkN(P \times \Delta_{k}) \ = \ \rkN(P) + k + 1.
    \]
\end{cor}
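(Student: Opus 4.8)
The plan is to prove the two matching inequalities. The upper bound $\rkN(P \times \Delta_k) \le \rkN(P) + k + 1$ is just subadditivity of the nonnegative rank under Cartesian products, noted above: if $\ext{P}$ is a minimal extension of $P$, then $\ext{P} \times \Delta_k$ is an extension of $P \times \Delta_k$, and since the $k$-simplex is its own minimal extension with $k+1$ facets, this extension has $f(\ext{P}) + (k+1) = \rkN(P) + k + 1$ facets.

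For the lower bound I would argue by induction on $k$, feeding Lemma~\ref{lem:facet} and Lemma~\ref{lem:2distinct}. For the base case $k = 1$, the prism $P \times \Delta_1$ has the two parallel — hence disjoint — facets $P \times \{0\}$ and $P \times \{1\}$, each affinely isomorphic to $P$; so Lemma~\ref{lem:2distinct} gives $\rkN(P \times \Delta_1) \ge \rkN(P) + 2$. For the inductive step, note that $\Delta_{k-1}$ is a facet of $\Delta_k$, so $P \times \Delta_{k-1}$ is a facet of $P \times \Delta_k$; Lemma~\ref{lem:facet} together with the induction hypothesis then yields
\[
    \rkN(P \times \Delta_k) \ \ge \ \rkN(P \times \Delta_{k-1}) + 1 \ \ge \ \rkN(P) + k + 1 .
\]
Combined with the upper bound this gives equality. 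One can also collapse the induction into a single step: any edge $e$ of $\Delta_k$ gives a face $P \times e \cong P \times \Delta_1$ of $P \times \Delta_k$ of codimension $k-1$, so Corollary~\ref{cor:face} applied together with the base case produces the same bound.

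There is essentially no obstacle. The heart of the matter is that, for the prism $P \times \Delta_1$, the two facets appearing in Lemma~\ref{lem:2distinct} are \emph{both} copies of $P$, so the known weakness of that lemma — that it needs $\min$ rather than $\max$ — does not bite here. The only routine checks are that $P \times \Delta_{k-1}$ really is a facet of $P \times \Delta_k$ (faces of a product are products of faces, and $\Delta_{k-1}$ has codimension $1$ in $\Delta_k$) and that $P \times \{0\}$ and $P \times \{1\}$ are disjoint copies of $P$. Since $\Delta_k$ is simultaneously vertex-minimal and facet-minimal, this argument also shows that $\rkN$ behaves additively on products with a simplex, which is the promised evidence for Conjecture~\ref{conj:prod}.
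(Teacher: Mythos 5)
Your proof is correct and follows essentially the same route as the paper: the upper bound via the product of a minimal extension of $P$ with $\Delta_k$, the base case $k=1$ from the two disjoint prism facets and Lemma~\ref{lem:2distinct}, and the inductive step from Lemma~\ref{lem:facet} applied to the facet $P\times\Delta_{k-1}$. Your remark that the two facets in the base case are both copies of $P$, so the $\min$ in Lemma~\ref{lem:2distinct} causes no loss, correctly identifies why the argument works.
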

\begin{proof}
    Let $\ext{P}$ be a minimal extension of $P$ with $\rkN(P)$ facets. Since
    the number of facets of a product add up, $\ext{P} \times \Delta_k$ is an
    extension of $P \times \Delta_k$ with $\rkN(P)+k+1$ facets.  Thus, we need
    to show that $\rkN(P)+k+1$ is also a lower bound.

    For $k=1$, the polytope $P \times \Delta_1$ is a prism over $P$ with two
    distinct facets isomorphic to $P$ and the claim follows from
    Lemma~\ref{lem:2distinct}. If $k > 1$, note that $P \times \Delta_{k-1}$
    is a facet of $P \times \Delta_{k}$, and an application of Lemma~\ref{lem:facet}
    yields the claim by induction on $k$.
\end{proof}

Another byproduct is a simple proof that every combinatorial cube is extension
maximal (see \cite[Proposition 5.9]{FKPT13}).

\begin{cor}\label{cor:cube}
    If $P$ is combinatorially equivalent to the $n$-dimensional cube $C_n =
    [0,1]^n$, then $\rkN(P) = 2n$.
\end{cor}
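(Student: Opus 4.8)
The plan is a short induction on $n$, using Lemma~\ref{lem:2distinct} for the lower bound and the trivial facet bound for the upper bound. For the upper bound, note that any polytope combinatorially equivalent to $C_n$ has exactly $2n$ facets, and the nonnegative rank of a polytope is at most its number of facets, so $\rkN(P) \le 2n$ is immediate.

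For the lower bound, I would argue by induction on $n$. The base case $n = 1$ is a segment, which is the simplex $\Delta_1$, so $\rkN(P) = 2 = 2\cdot 1$. For the inductive step, recall that in $C_n = [0,1]^n$ the two facets $\{x : x_i = 0\}$ and $\{x : x_i = 1\}$ are disjoint, and each is combinatorially equivalent to $C_{n-1}$. Hence a polytope $P$ combinatorially equivalent to $C_n$ has two disjoint facets $F_1$ and $F_2$, each combinatorially equivalent to the $(n-1)$-cube. By the inductive hypothesis, $\rkN(F_1) = \rkN(F_2) = 2(n-1)$, so Lemma~\ref{lem:2distinct} gives
\[
    \rkN(P) \ \ge \ \min\{\rkN(F_1), \rkN(F_2)\} + 2 \ = \ 2(n-1) + 2 \ = \ 2n.
\]
Combining the two bounds yields $\rkN(P) = 2n$.

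There is no real obstacle here; the only points to check are that the combinatorial $n$-cube genuinely has a pair of disjoint facets each combinatorially an $(n-1)$-cube (which is a standard fact about the face lattice of the cube, preserved under combinatorial equivalence) and that the base case is correctly the segment. One could alternatively phrase the same argument by observing that $P$, being combinatorially a product, has two "opposite" facets and invoking the prism case of Corollary~\ref{cor:prod_simplex} in spirit, but the direct appeal to Lemma~\ref{lem:2distinct} is cleaner since it does not require any metric product structure on $P$.
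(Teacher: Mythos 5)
Your proof is correct and follows essentially the same approach as the paper: establish $\rkN(P)\le 2n$ by counting facets, then prove $\rkN(P)\ge 2n$ by induction on $n$, using the pair of disjoint facets combinatorially equivalent to $(n-1)$-cubes together with Lemma~\ref{lem:2distinct}. The paper's proof is identical in structure and content.
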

\begin{proof}
    Since $f(P) = f(C_n) = 2n$, we only need to prove $\rkN(P) \ge 2n$.  For
    $n=1$, $P$ is a $1$-dimensional simplex for which the claim is true. For
    $n \ge 2$ observe that $P$ has two disjoint facets $F_1,F_2$ that are
    combinatorially equivalent to $(n-1)$-cubes. By induction and
    Lemma~\ref{lem:2distinct} we compute $\rkN(P)  \ge  \rkN(C_{n-1}) + 2  =
    2n$.
\end{proof}

With these tools, we are ready to prove Theorem~\ref{thm:main} for the cases
with $n>6$. The case $n=6$ and $1 < k < n-1$ will be treated in
Proposition~\ref{prop:small_cases} in the next section. A key property,
inherited from cubes, that allows for an inductive treatment of hypersimplices
is that for $1 < k < n-1$, the presentation~\eqref{eqn:ineq} purports that 
\begin{equation}\label{eqn:FG}
\begin{aligned}
    F_i \ &:= \ \Delta_{n,k} \cap \{x_i = 0\} \ \cong \ \Delta_{n-1,k}, 
    \text{ and } \\
    G_i \ &:= \ \Delta_{n,k} \cap \{x_i = 1\} \ \cong \ \Delta_{n-1,k-1}, 
\end{aligned}
\end{equation}
are disjoint facets for any $1 \le i \le n$. We call these the
\Defn{$\boldsymbol F$-facets} and  \Defn{$\boldsymbol G$-facets},
respectively.

\begin{prop}\label{prop:large_cases}
    Assume that $\rkN(\Delta_{6,2}) = \rkN(\Delta_{6,3}) = 12$. Then
    $\rkN(\Delta_{n,k}) = 2n$ for all $n > 6$ and $1 < k < n-1$.
\end{prop}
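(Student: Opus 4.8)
The plan is to prove $\rkN(\Delta_{n,k}) = 2n$ by induction on $n$, starting from the base case $n = 6$ (which is hypothesized, with $k \in \{2,3\}$ covering both $k$ and $n-k$). The upper bound $\rkN(\Delta_{n,k}) \le 2n$ holds because $\Delta_{n,k} = [0,1]^n \cap \{x_1 + \cdots + x_n = k\}$ by \eqref{eqn:ineq}, so the $n$-dimensional cube is an extension with $2n$ facets (or one argues directly that \eqref{eqn:ineq} gives $2n$ inequalities in the hyperplane). So the real content is the lower bound $\rkN(\Delta_{n,k}) \ge 2n$ for $n \ge 7$.

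For the inductive step, I would exploit the disjoint facets exhibited in \eqref{eqn:FG}. Fix any coordinate $i$: the $F$-facet $F_i \cong \Delta_{n-1,k}$ and the $G$-facet $G_i \cong \Delta_{n-1,k-1}$ are disjoint facets of $\Delta_{n,k}$. By Lemma~\ref{lem:2distinct},
\[
    \rkN(\Delta_{n,k}) \ \ge \ \min\{\rkN(\Delta_{n-1,k}), \rkN(\Delta_{n-1,k-1})\} + 2.
\]
If both $\Delta_{n-1,k}$ and $\Delta_{n-1,k-1}$ are proper hypersimplices (i.e.\ $2 \le k \le n-2$ after decrement, which happens when $3 \le k \le n-3$), then for $n - 1 \ge 6$ the inductive hypothesis gives both equal to $2(n-1)$, so $\rkN(\Delta_{n,k}) \ge 2(n-1) + 2 = 2n$. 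The remaining cases are $k = 2$ and $k = n-2$ (which are affinely isomorphic, so it suffices to treat $k = 2$): here $G_i \cong \Delta_{n-1,1}$ is a simplex of dimension $n-2$ with $\rkN = n-1$, while $F_i \cong \Delta_{n-1,2}$. For $n \ge 7$, $n - 1 \ge 6$ so by induction $\rkN(F_i) = 2(n-1) = 2n - 2$, whereas $\rkN(G_i) = n - 1 < 2n - 2$; thus $\min = n-1$ and Lemma~\ref{lem:2distinct} only yields $\rkN(\Delta_{n,k}) \ge n + 1$, which is too weak.

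The main obstacle is therefore the case $k = 2$ (equivalently $k = n-2$), where the two disjoint facets have very unbalanced nonnegative ranks and Lemma~\ref{lem:2distinct} is inadequate. To handle this I would use the $F$-facets more cleverly: instead of pairing $F_i$ with the small facet $G_i$, note that for $i \ne j$ the two $F$-facets $F_i$ and $F_j$ are \emph{not} disjoint, but one can still extract information. A better route is to observe that $\Delta_{n,2}$ has the $F$-facet $F_1 \cong \Delta_{n-1,2}$, and among the facets of an optimal extension $\ext{P}$ of $\Delta_{n,2}$, the preimage $\ext{F}_1$ is a facet-defining face; by the refined counting in the proof of Lemma~\ref{lem:facet}, $f(\ext{P}) \ge f(\ext{F}_1) + c_1$ where $c_1$ is the number of facets of $\ext{P}$ containing $\ext{F}_1$. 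One then argues that $c_1 \ge 3$ in this situation — because the facets $G_1, \dots, G_n$ of $\Delta_{n,2}$ (the $n$ simplices), together with the way they are glued around $F_1$, force at least three facets of $\ext{P}$ to meet $\ext{F}_1$. Combined with $f(\ext{F}_1) \ge \rkN(\Delta_{n-1,2}) = 2(n-1)$ by induction, this gives $f(\ext{P}) \ge 2(n-1) + 3 > 2n$, a contradiction unless... — more carefully, one wants exactly $c_1 \ge 2$ to get $2n$, and I expect the geometry of \eqref{eqn:ineq} (the facet $\{x_1 = 0\}$ of the cube is met by the opposite facet $\{x_1 = 1\}$ only "at infinity" within the hyperplane, but the other $2(n-1)$ cube facets all meet it) to supply this. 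Alternatively, and perhaps more robustly, I would invoke the general tools for upper/lower bounding extension complexity developed in Section~\ref{sec:geom} (referenced in the paper's structure outline) which presumably includes a statement that lets one combine a facet's extension complexity with the count of facets through it; the $k = 2$ case is exactly where those tools, rather than the bare Lemmas~\ref{lem:facet} and~\ref{lem:2distinct}, are needed. In the write-up I would first dispatch $3 \le k \le n-3$ by the clean two-facet induction above, then treat $k \in \{2, n-2\}$ separately with the sharper facet-counting argument, using $n \ge 7$ to ensure $\Delta_{n-1,2}$ already satisfies the theorem.
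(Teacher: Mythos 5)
Your reduction for $3 \le k \le n-3$ via Lemma~\ref{lem:2distinct} matches the paper exactly, and you correctly identify that $k=2$ (equivalently $k=n-2$) is where that lemma fails because the disjoint facets $F_i\cong\Delta_{n-1,2}$ and $G_i\cong\Delta_{n-1,1}$ have very different nonnegative ranks. However, your proposed fix for $k=2$ does not close the gap. You suggest showing that the preimage $\ext{F}_1$ is contained in $c_1\ge 2$ (or $\ge 3$) facets of $\ext{P}$ and applying $f(\ext{P})\ge f(\ext{F}_1)+c_1$. But the only case that is not already settled is the one where $\ext{F}_i$ \emph{is} a facet of $\ext{P}$ for every $i$ (if some $\ext{F}_i$ fails to be a facet, then indeed $c_i\ge 2$ and induction gives $\ge 2(n-1)+2 = 2n$, exactly as the paper observes). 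But when $\ext{F}_1$ is a facet, $c_1=1$ by definition, and $f(\ext{P})\ge f(\ext{F}_1)+1\ge 2(n-1)+1=2n-1$ falls one short. There is no ``gluing'' argument that forces $c_1\ge 2$ here, because the hard case is precisely the one where $c_1=1$ for all $i$. Your fallback — ``invoke the general tools of Section~\ref{sec:geom}'' — is not a proof.

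What the paper actually does in the $k=2$ case, assuming each $\ext{F}_i=\{\ell_i=0\}\cap\ext P$ is a facet of $\ext P$ for $i=1,\dots,n$, is a global argument that is quite different in flavor from local facet-counting. Consider the polyhedron $\ext{Q}=\{y:\ell_i(y)\ge 0,\ i=n+1,\dots,M\}$ cut out by the \emph{remaining} inequalities. The hyperplanes supporting the $G$-facets of $\Delta_{n,2}$ bound a full-dimensional simplex $Q$ (this is the key feature of $k=2$: there are only $n$ $G$-facets in dimension $n-1$). One shows that the preimage of each such hyperplane supports a face of $\ext{Q}$ — using that $\ext{G}_i$ cannot lie in any $\ext{F}_j$ since $G_i\not\subseteq F_j$ — which gives $\pi(\ext{Q})\subseteq Q$; combined with the observation that the lineality directions of $\ext Q$ are killed by the fact that $\ext P = \ext Q\cap\{\ell_i\ge 0,\ i\le n\}$ is bounded, this forces $\ext{Q}$ to be a bounded polytope in $\R^m$ and hence to have at least $m+1\ge n$ facets. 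Then $f(\ext{P})=n+f(\ext{Q})\ge 2n$. This boundedness argument, not a sharper count of facets through $\ext F_1$, is the missing ingredient in your proposal.
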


\begin{proof}
    Let $n \ge 7$.  For $2 < k < n-2$, the pairs of disjoint
    facets~\eqref{eqn:FG} allow us to use Lemma~\ref{lem:2distinct} together
    with induction on $n$ and $k$ to establish the result. Hence, the relevant
    cases are $n \ge 7$ and $k = 2$ (which is equivalent to $k=n-2$).
    
    For $k = 2$, let
    \[
        \ext{P} \ = \ \{ y \in \R^m : \ell_i(y) \ge 0  \text{ for }
        i=1,\dots,M \}
    \]
    be an extension of $\Delta_{n,k}$ given by affine linear forms
    $\ell_1,\dots,\ell_M$ and $M = \rkN(\Delta_{n,k})$. For convenience, we
    can regard $\Delta_{n,k}$ as a full-dimensional polytope in the affine
    hyperplane \mbox{$\{ x\in\R^n : x_1 + \cdots + x_n = k \} \cong \R^{n-1}$.
    Let $\pi : \R^m \rightarrow \R^{n-1}$} the linear projection that takes
    $\ext{P}$ to $\Delta_{n,k}$. If for some $1 \le i \le n$, the preimage
    $\ext{F}_i = \pi^{-1}(F_i) \cap \ext{P}$ is not a facet then $f(\ext{P})
    \ge \rkN(F_i) + 2 = 2n$ by induction and we are done. So, we have to
    assume that $\ext{F}_i = \{ y \in \ext{P} : \ell_i(y) = 0 \}$ is a facet
    of $\ext{P}$ for all $i=1,\dots,n$. 

    It is sufficient to show that the polyhedron $\ext{Q} := \{ y \in \R^m :
    \ell_i(y) \ge 0 \text{ for } i = n+1,\dots,M \}$ is bounded and hence has
    $f(\ext{Q}) \ge m+1 \ge n$ facets. Since $f(\ext{P}) = n + f(\ext{Q})$
    this implies the result.  The key observation is that the polyhedron $Q
    \subset \R^{n-1}$ bounded by the hyperplanes defining the facets $G_i$ of
    $\Delta_{n,k}$ is a full-dimensional simplex and hence bounded.  We claim
    that $\pi(\ext{Q}) \subseteq Q$.  For this it is sufficient to show that
    if $H_i$ is the unique hyperplane containing $G_i$, then $\pi^{-1}(H_i)$
    supports a face of $\ext{Q}$. By construction, $\pi^{-1}(H_i)$ supports
    the face $\ext{G}_i := \pi^{-1}(G_i) \cap \ext{P}$ of $\ext{P}$.  Now, if
    $\ext{G}_i \subseteq \ext{F}_j$ for some $1 \le j \le n$, this would imply
    $G_i \subseteq F_j$. This, however, cannot happen as $G_i =
    \pi(\ext{G}_i)$ and $F_j$ are distinct facets of $\Delta_{n,k}$.  Thus,
    $\ext{G}_i = \{ y \in \ext{P} : \ell_j(y) = 0 \text{ for } j \in J\}$ for
    some $J \subseteq \{ n+1,\dots,M\}$ and consequently $H_i$ supports a face
    of $\ext{Q}$.  Moreover, $\ext{Q} \subseteq \pi^{-1}(Q)$ and hence, the
    lineality space of $\ext{Q}$ is contained in $\ker \pi$. However the
    hyperplanes $\{ y : \ell_i(y) = 0 \}$ with $1\leq i\leq n$ are parallel to
    $\ker \pi$, because they are preimages of the hyperplanes supporting the
    facets $F_i$.  Therefore, $\ext{Q}$ is bounded since we assumed that
    $\ext{P} = \ext{Q} \cap \{ y : \ell_i(y) \ge 0 \text{ for } i =
    1,\dots,n\}$ is bounded.
\end{proof}

\section{Rectangle covering numbers and small hypersimplices}\label{sec:rc}

In this section we treat the small hypersimplices $4 \le n \le 6$ and $1 < k <
n-1$. We will do this by way of rectangle covering numbers.  The rectangle
covering number, introduced in~\cite{FKPT13}, is a very elegant, combinatorial
approach to lower bounds on the nonnegative rank of a polytope.  For a
polytope 
\[
    P \ = \ \{ x \in \R^d : \ell_1(x) \ge 0, \dots,\ell_M(x) \ge 0\} \ = \
    \conv(v_1,\dots,v_N),
\]
where $\ell_1(x),\dots,\ell_M(x)$ are affine linear forms,
the \Defn{slack matrix} is the nonnegative matrix $S_P \in \R_{\ge0}^{M \times
N}$  with $(S_P)_{ij} = \ell_i(v_j)$.  A \Defn{rectangle} of $S_P$ is an index
set $R = I \times J$ with $I \subseteq [M]$, $J \subseteq [N]$ such that
$(S_P)_{ij} > 0$ for all $(i,j) \in R$. The \Defn{rectangle covering number}
$\rc(S_P)$ is the smallest number of rectangles $R_1,\dots,R_s$ such that
$(S_P)_{ij} > 0$ if and only if $(i,j) \in \bigcup_t R_t$. As explained
in~\cite[Section~2.4]{FKPT13}
\[
    \rc(S_P) \ \le \  \rkN(P).
\]

There are strong ties between the geometry of extensions and rectangle
covering numbers. In particular our geometric tools from
Section~\ref{sec:geom} have independent counterparts for rectangle covering
numbers. Note that although the results are structurally similar they do not
imply each other and even the proofs are distinct.

\begin{lem}\label{lem:rc}
    Let $P$ be a polytope and $F \subset P$ a facet. Then
    \[
        \rc(S_P) \ \ge \ \rc(S_F) + 1.
    \]
    Moreover, if there is a facet $G \subset P$ disjoint from $F$, then 
    \[
        \rc(S_P) \ \ge \ \min\{\rc(S_F),\rc(S_G)\} + 2.
    \]
\end{lem}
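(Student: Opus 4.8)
The plan is to mimic the geometric proofs of Lemma~\ref{lem:facet} and Lemma~\ref{lem:2distinct} at the level of slack matrices, exploiting the fact that the slack matrix of a facet~$F$ of~$P$ is essentially a submatrix of $S_P$. Concretely, write $P = \{x : \ell_1(x) \ge 0,\dots,\ell_M(x) \ge 0\} = \conv(v_1,\dots,v_N)$ and suppose $F$ is cut out by $\ell_1 = 0$. Let $J_F \subseteq [N]$ be the indices of vertices of $P$ lying on $F$; then the rows of $S_F$ are indexed (up to positive scaling, which does not change the support pattern) by those $\ell_i$ with $i \ne 1$ whose restriction to $F$ is not identically zero, and the columns by $J_F$. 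So $S_F$ is obtained from $S_P$ by deleting column-set $[N]\setminus J_F$, deleting row $1$ (which is all zeros on $J_F$), and deleting any further rows that happen to vanish on all of $J_F$.

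For the first inequality I would argue: take a minimal rectangle covering $R_1,\dots,R_s$ of $S_P$ with $s = \rc(S_P)$. Restricting each $R_t$ to the columns $J_F$ and to the rows surviving in $S_F$ gives rectangles of $S_F$ (a sub-rectangle of a rectangle is a rectangle, possibly empty) that together cover the support of $S_F$, so $\rc(S_F) \le s$. To get the strict gain of $+1$, I need to exhibit a rectangle $R_t$ in the cover that contributes nothing after restriction, i.e.\ whose column-part is disjoint from $J_F$. The entry $(S_P)_{1,j}$ is positive exactly for $j \notin J_F$, and since $F$ is a facet (hence nonempty and proper) there is at least one such $j$, so row~$1$ has nonempty support; any rectangle covering a positive entry $(1,j)$ of row~$1$ has column-part inside $[N]\setminus J_F$ and therefore disappears upon restriction to $S_F$. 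At least one rectangle of the cover meets row~$1$, so $\rc(S_F) \le s - 1$, giving $\rc(S_P) \ge \rc(S_F) + 1$.

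For the second inequality, assume $G$ is a facet disjoint from $F$, cut out by $\ell_2 = 0$, with column-index set $J_G$; disjointness means $J_F \cap J_G = \emptyset$. If $\rc(S_F) \ne \rc(S_G)$ the bound follows from the first part applied to whichever of $F$, $G$ has the smaller number, so assume $\rc(S_F) = \rc(S_G) = k$ and let $R_1,\dots,R_s$ be a minimal cover of $S_P$. A rectangle whose column-part meets $J_F$ has column-part disjoint from row $2$'s support (which lies in $[N] \setminus J_G \supseteq$ nothing forced — I will instead use: a rectangle covering a positive entry of row~$1$ has columns in $[N]\setminus J_F$, which is disjoint from $J_F$; symmetrically for row~$2$ and $J_G$). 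The point is that since $J_F$ and $J_G$ are disjoint, the rectangle(s) used to cover the support of row~$1$ (columns outside $J_F$) and those covering the support of row~$2$ (columns outside $J_G$) cannot be the same rectangle, because a single rectangle has a single column-set which cannot simultaneously avoid $J_F$ and avoid $J_G$ while — wait, it could avoid both. The correct argument: restrict the cover to $S_F$ to get $\le k$ of the $R_t$ contributing, and separately restrict to $S_G$; a rectangle $R_t$ contributes to the $S_F$-cover only if its column-part meets $J_F$, hence (as its rows then must be positive on some vertex of $F$) it cannot meet row~$2$ alone — more cleanly, I will count: at least one $R_t$ has column-part meeting $J_F$ yet not contained in... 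Let me instead follow the slack analogue of Lemma~\ref{lem:2distinct} directly: among $R_1,\dots,R_s$, at least $\rc(S_F) = k$ are needed to cover the part of $S_P$ supported on columns $J_F$ (formally, restricting to $S_F$ uses at least $k$ nonempty restricted rectangles, so at least $k$ of the $R_t$ have column-part meeting $J_F$); call this set $\mathcal{A}$, $|\mathcal{A}| \ge k$. Similarly let $\mathcal{B}$ be the $R_t$ with column-part meeting $J_G$, $|\mathcal{B}| \ge k$. A rectangle in $\mathcal{A}$ has all its rows positive on some column in $J_F$, in particular those rows are not identically zero on $F$; row~$1$ is zero on $F$, so $1 \notin I(R_t)$ for $R_t \in \mathcal{A}$, and likewise $2 \notin I(R_t)$ for $R_t \in \mathcal{B}$. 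But row~$1$ has positive entries (on columns outside $J_F$, which includes columns of $J_G$? not necessarily) — the clean finish is: the rectangles covering the positive entries of row~$1$ form a nonempty set $\mathcal{C}_1$ with $1 \in I(R_t)$, hence $\mathcal{C}_1 \cap \mathcal{A} = \emptyset$; and since row~$1$ is zero on $J_F$ but a rectangle of $\mathcal{C}_1$ need not be disjoint from $J_G$, I will additionally invoke that one may choose $j_1 \in [N]\setminus J_F$ with $(1,j_1) > 0$ and note any covering rectangle of $(1,j_1)$ lies outside $\mathcal{A}$; symmetrically a covering rectangle of some $(2,j_2) > 0$ with $j_2 \notin J_G$ lies outside $\mathcal{B}$. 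The main obstacle is precisely this bookkeeping — ensuring the "extra two" rectangles are genuinely distinct from the $k$ forced ones and from each other; I expect to handle it by picking a vertex $v$ of $P$ not on $F$ and a vertex $w$ of $P$ not on $G$ (possible since $F,G$ are proper faces), noting $(1,v)$-entry... — and then writing it so that $\rc(S_P) \ge k + 2$. Thus $\rc(S_P) \ge \min\{\rc(S_F),\rc(S_G)\} + 2$.
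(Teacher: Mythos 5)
Your argument for the first inequality is correct and is essentially the paper's argument: at least $\rc(S_F)$ rectangles are needed to cover $S_F$, and the rectangle(s) covering the positive entries of the $F$-row have column set disjoint from $J_F$ (since the $F$-row vanishes there) and hence are distinct from the $\rc(S_F)$ forced ones.

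For the second inequality, however, your write-up does not reach a conclusion, and you say so yourself. The gap is real, not just a matter of bookkeeping: with the sets you defined, a rectangle covering a positive entry of the $G$-row (row $2$) can perfectly well lie in $\mathcal{A}$ (the $G$-row is positive on all of $J_F$), and a rectangle could even cover positive entries of both row $1$ and row $2$ simultaneously if its columns lie in $[N]\setminus(J_F\cup J_G)$. So a straightforward disjoint count of ``$k$ forced + one for row $1$ + one for row $2$'' does not go through. The extra idea you are missing, and which the paper uses, is a case distinction on how many rectangles cover row $1$: if at least two, you are done at once (both lie outside $\mathcal{A}$). If a \emph{single} rectangle $R_1$ covers all positive entries of row $1$, then its column set must contain all of $[N]\setminus J_F$, in particular all of $J_G$; since every row of $S_G$ has at least one zero among the $J_G$ columns (every facet of $G$ misses some vertex of $G$), $R_1$ cannot contain any row of $S_G$, so $R_1$ covers nothing in $S_G$; and since the $G$-row vanishes on $J_G$, $R_1$ cannot contain row $2$ either. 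Running the symmetric argument for row $2$ against $S_G$ and $\mathcal{B}$ then produces either $\ge 2$ new rectangles or a unique $R_2$ with column set containing $J_F$, which covers nothing in $S_F$ and cannot contain row $1$. In the remaining case one gets $R_1\ne R_2$, $R_1,R_2\notin\mathcal{A}$ and $|\mathcal{A}|\ge r$, hence $\rc(S_P)\ge r+2$. In short: it is \emph{uniqueness} of the covering rectangle that forces its column set to be large, and that largeness is what lets you rule out overlaps with $\mathcal{A}$, $\mathcal{B}$, and the other row; without it, the count you were attempting cannot be closed.
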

\begin{proof}
    In the first case, part of the slack matrix of $S_P$ is of the form
    \[
        \renewcommand\arraystretch{.85}
        \left[ \begin{array}{c@{\,}c@{\,}c|c}
                 0& \cdots & 0 & a \\
                \hline
                & &          & \ast \\
                & S_F &          & \vdotsHacked \\
                &  &          & \ast \\
        \end{array} \right],
    \]
    and since $F$ is a facet, $a > 0$. There are at least $\rc(S_F)$
    rectangles necessary to cover $S_F$. None of these rectangles can cover
    $a$ as this is obstructed by the zero row above $S_F$.

    For the second case, we may assume that $r = \rc(S_F) = \rc(S_G)$.
    Similarly, we can assume that parts of $S_P$ look like 
    \[
       \renewcommand\arraystretch{.85}
        \left[
        \begin{array}{c@{\,}c@{\,}c|c@{\,}c@{\,}c}
                0& \cdots & 0 & a_1 &  \cdots &  a_l \\
                b_1 & \cdots & b_k & 0 &  \cdots &  0 \\
                \hline
                &  &          &   \ast & \cdots & \ast        \\ 
                & S_F &          &  \vdotsHacked &    &
                \vdotsHacked        \\
                &  &          & \ast  & \cdots  & \ast        \\
		\hline
               \ast & \cdots & \ast         &   &  &         \\   
                \vdotsHacked & & \vdotsHacked        &   & S_G &         \\
                \ast  & \cdots  & \ast         &   &  &         \\
        \end{array}
    \right]
    \]
    with $a_1,\dots,a_l,b_1,\dots,b_k > 0$. There are $r$ rectangles necessary
    to cover $S_F$. None of these rectangles can cover the first row. If the
    first row is covered with $\ge 2$ rectangles, we are done. If, however, a
    single rectangle covers the first row, then it cannot cover any
    row of $S_G$. Indeed, every row of $S_G$ corresponds to a facet of $G$ and
    contains at least one vertex of $G$. Hence, every row of $S_G$ has one
    zero entry. Since also $S_G$ needs at least 
    $r$ rectangles to be covered, by the
    same token we obtain that the second row must be covered by a
    unique rectangle which does not extend to $S_F$ or $S_G$. Consequently,
    at least $r+2$ rectangles are necessary.
\end{proof}

The example from Section~\ref{sec:geom} shows that similar to
Lemma~\ref{lem:2distinct}, we cannot replace $\min$ with $\max$. It can be
checked that the rectangle covering number of an octagon is $6$.

As direct consequence we obtain a lower bound on rectangle covering numbers (cf.~\cite[Prop.~5.2]{FKPT13}).

\begin{cor}\label{cor:lb_rc}
    Let $P$ be a $d$-polytope, then $\rc(S_P)\geq d+1$.
\end{cor}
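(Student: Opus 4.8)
The plan is to derive the inequality $\rc(S_P)\ge d+1$ by induction on the dimension $d$, using the second part of Lemma~\ref{lem:rc} in exactly the same way that Corollary~\ref{cor:cube} used Lemma~\ref{lem:2distinct}. First I would dispose of the base case: if $d=1$ then $P$ is a segment, its slack matrix is (up to permutation) $\begin{psmallmatrix}0 & a\\ b & 0\end{psmallmatrix}$ with $a,b>0$, and no single rectangle can cover both positive entries, so $\rc(S_P)=2=d+1$. For the inductive step I would pick any facet $F\subset P$; a facet of a $d$-polytope is a $(d-1)$-polytope, so by the induction hypothesis $\rc(S_F)\ge d$, and the first part of Lemma~\ref{lem:rc} gives $\rc(S_P)\ge\rc(S_F)+1\ge d+1$. (Note this only needs the first, weaker inequality in Lemma~\ref{lem:rc}, so the disjoint-facet statement is not actually required here — though one could alternatively phrase the induction through a pair of disjoint facets when they exist.)

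One subtlety worth addressing explicitly is that the bound $\rc(S_P)\ge d+1$ should hold regardless of which inequality description of $P$ is chosen, since $\rc(S_P)$ a priori depends on the chosen $\ell_i$. The clean way around this is to work with an irredundant description, where the rows of $S_P$ are exactly the facets and every row of $S_F$ appears as a restriction of a row of $S_P$; adding redundant rows only adds rows to the slack matrix and cannot decrease the rectangle covering number, so the bound for the irredundant description implies it in general. I would state this reduction in one sentence at the start of the proof.

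The only real point of care — and the main obstacle, such as it is — is making sure the matrix picture in the proof of Lemma~\ref{lem:rc} genuinely applies to $F$ as a facet of the full-dimensional polytope $P$: one needs that $P$ has at least one facet (true since $d\ge1$ and $P$ is a genuine polytope, not a point), that the row of $S_P$ indexed by the facet-defining inequality of $F$ restricts to the all-zero row over the vertices of $F$, and that this row has a positive entry at some vertex of $P$ not on $F$ (which holds precisely because $F$ is a proper face). All of these are immediate, so the proof is short. I would write roughly:

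\begin{proof}
    We may assume the description of $P$ is irredundant, so that its rows
    correspond to the facets of $P$; adding redundant inequalities only
    appends rows to the slack matrix and hence cannot decrease the rectangle
    covering number. We argue by induction on $d$. For $d=1$, the polytope
    $P$ is a segment with two vertices and two facets, and its slack matrix
    has, up to permutation of rows and columns, the form
    $\left(\begin{smallmatrix} 0 & a \\ b & 0\end{smallmatrix}\right)$ with
    $a,b>0$. No rectangle contains both positive entries, so
    $\rc(S_P)=2=d+1$.  For $d\ge 2$, let $F\subset P$ be any facet. Then $F$
    is a $(d-1)$-polytope, so $\rc(S_F)\ge d$ by induction, and
    Lemma~\ref{lem:rc} yields $\rc(S_P)\ge \rc(S_F)+1\ge d+1$.
\end{proof}
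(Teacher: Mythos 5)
Your proof is correct and is exactly the intended argument: the paper presents this corollary as a ``direct consequence'' of Lemma~\ref{lem:rc} without spelling out the induction, and your induction on $d$ using the facet inequality $\rc(S_P)\ge\rc(S_F)+1$ is the natural way to fill that in. The observations about irredundant descriptions and the $d=1$ base case are fine (and the irredundancy point, while harmless, is not really an issue since the paper's slack matrix is defined with rows indexed by facet-defining inequalities and redundant rows only help).
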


It was amply demonstrated in~\cite{KaibelW15,FMPT} that the rectangle covering
number is a very powerful tool.  We use it to compute the nonnegative rank of
small hypersimplices. For a given polytope $P$ with slack matrix $S = S_P$ the
decision problem of whether there is a rectangle covering with $r$ rectangles
can be phrased as a satisfiability problem: For every rectangle $R_l$ and
every $(i,j)$ with $S_{ij} >0$ we designate a Boolean variable $X^l_{ij}$. If
$X^l_{ij}$ is true, this signifies that $(i,j) \in R_l$. Every $(i,j)$ has to
occur in at least one rectangle. Moreover, for $(i,j)$ and $(i',j')$ if
$S_{ij} \cdot S_{i'j'} > 0$ and $S_{ij'} \cdot S_{i'j} = 0$, then $(i,j)$ and
$(i',j')$ cannot be in the same rectangle. The validity of the resulting
Boolean formula can then be verified using a SAT solver. For the
hypersimplices $\Delta_{n,k}$ with $1<k<n-1$, the sizes of the slack matrix is
$2n \times \binom{n}{k}$.  For $n \le 6$ these sizes are manageable and the
satisfiability problem outlined above can be decided by a computer.  For
example, for $(n,k) = (6,3)$ this yields $1320$ Boolean variables and $55566$
clauses in a conjunctive normal form presentation. The attached
\texttt{python} script produces a SAT instance for all $(n,k,r)$ and we used
\texttt{lingeling}~\cite{lingeling} for the verification. This gives a
computer-aided proof for the small cases which also completes our proof for
Theorem~\ref{thm:main}.

\begin{prop}\label{prop:small_cases}
    For $n \le 6$, $\rc(\Delta_{n,k}) = \rkN(\Delta_{n,k})$ for all $1 \le k
    \le n$. In particular, $\rkN(\Delta_{4,2})=6$,
    $\rkN(\Delta_{5,2})=\rkN(\Delta_{5,3})=9$, and
    $\rkN(\Delta_{6,2})=\rkN(\Delta_{6,3})=12$.
\end{prop}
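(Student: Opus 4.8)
The plan is to use the universally valid inequality $\rc(S_P) \le \rkN(P)$ and, for each pair $(n,k)$ with $n \le 6$, to pin down a target value $r_{n,k}$ by proving separately that $\rc(\Delta_{n,k}) \ge r_{n,k}$ and $\rkN(\Delta_{n,k}) \le r_{n,k}$. The resulting sandwich
\[
  r_{n,k} \ \le \ \rc(\Delta_{n,k}) \ \le \ \rkN(\Delta_{n,k}) \ \le \ r_{n,k}
\]
then forces $\rc(\Delta_{n,k}) = \rkN(\Delta_{n,k}) = r_{n,k}$, which is exactly the claimed identity together with the listed numerical values. Since $\Delta_{n,n-k} \cong \Delta_{n,k}$, it suffices to consider $1 \le k \le \lfloor n/2 \rfloor$; for $k = 1$ the hypersimplex is the simplex $\Delta_{n-1}$, whose slack matrix is positive precisely on its diagonal, so every rectangle is a single cell and $\rc(\Delta_{n-1}) = n = \rkN(\Delta_{n-1})$. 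The remaining small hypersimplices are $\Delta_{4,2}$, $\Delta_{5,2} \cong \Delta_{5,3}$, $\Delta_{6,2}$, and $\Delta_{6,3}$, for which I would take $r_{n,k} = 6, 9, 12, 12$ respectively.

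For the upper bounds I would start from the trivial estimate $\rkN(P) \le \min\{v(P), f(P)\}$. By~\eqref{eqn:ineq} a proper hypersimplex has $2n$ facets, so $\rkN(\Delta_{6,2}), \rkN(\Delta_{6,3}) \le 12$; and $\Delta_{4,2}$ has $6$ vertices, so $\rkN(\Delta_{4,2}) \le 6$. The only small hypersimplex for which this trivial bound is not tight is $\Delta_{5,2}$, with $\min\{v,f\} = 10$; for this one I would exhibit an explicit extension with $9$ facets, yielding $\rkN(\Delta_{5,2}) = \rkN(\Delta_{5,3}) \le 9$. This single construction is the only place in the argument where an explicit geometric extension is required.

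The lower bounds $\rc(\Delta_{n,k}) \ge r_{n,k}$ are where the computer enters, via the SAT reduction described above. For each of the four proper small hypersimplices I would write down the slack matrix $S = S_{\Delta_{n,k}} \in \R_{\ge 0}^{2n \times \binom{n}{k}}$, fix $r = r_{n,k} - 1$, and generate the CNF instance on Boolean variables $X^l_{ij}$ (for $1 \le l \le r$ and $S_{ij} > 0$): a coverage clause $\bigvee_{l=1}^{r} X^l_{ij}$ for every positive entry, and a conflict clause $\neg X^l_{ij} \vee \neg X^l_{i'j'}$ whenever $S_{ij} S_{i'j'} > 0$ but $S_{ij'} S_{i'j} = 0$. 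A satisfying assignment is exactly a covering of $S$ by $r$ rectangles, so a verified unsatisfiability answer proves $\rc(\Delta_{n,k}) \ge r + 1 = r_{n,k}$; feeding the instances produced by the attached \texttt{python} script to \texttt{lingeling} should return UNSAT in all four cases. Combined with the upper bounds the sandwich closes, and together with Proposition~\ref{prop:large_cases} this completes the proof of Theorem~\ref{thm:main} as well.

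The main obstacle is twofold, and both parts are the non-routine ones. First, the rigor of the argument rests entirely on the four SAT refutations — the largest instance, for $(6,3)$, has $1320$ variables and $55566$ clauses, comfortably within reach of a modern solver but not something verifiable by hand — so one should carefully audit the generated slack matrices, the CNF translation, and the solver's output, ideally via an independently checkable refutation certificate. Second, one must actually construct the $9$-facet extension of $\Delta_{5,2}$; this step is unavoidable precisely because $\Delta_{5,2}$ is the unique small hypersimplex whose rectangle covering number strictly undercuts the trivial upper bound $\min\{v,f\}$. One could in principle shrink the SAT search by quotienting out the transitive $S_n$-action on $\Delta_{n,k}$, but at these sizes that refinement is unnecessary.
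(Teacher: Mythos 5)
Your plan matches the paper's proof in structure: a two-sided sandwich $\rc \le \rkN$ with lower bounds from SAT-verified rectangle covers and upper bounds from $\min\{v,f\}$, closed for $\Delta_{5,2}$ by an explicit $9$-facet extension. The one substantive item you flag but do not supply is precisely that extension; the paper gives it concretely (delete two non-adjacent vertices of $\Delta_{5,2}$ to get a $4$-polytope with $8$ vertices and $7$ facets, then take a $2$-fold pyramid), and without some such construction the $\Delta_{5,2}$ case is not finished. A minor cosmetic difference: for $\Delta_{4,2}$ the paper avoids the SAT computation by noting it is the octahedron, dual to the $3$-cube, and invoking Corollary~\ref{cor:cube} together with polar-invariance of $\rkN$; your SAT route is also fine, and indeed establishes the claimed equality $\rc = \rkN$ directly, but then you should also note that one must separately check $\rc(\Delta_{4,2}) = 6$ and $\rc(\Delta_{5,2}) = 9$ (not just lower bounds) to get the full statement $\rc = \rkN$, which your sandwich does yield.
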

\begin{proof}
    The hypersimplex $\Delta_{4,2}$ is a $3$-dimensional polytope with $6$
    vertices and, more precisely, affinely isomorphic to the octahedron. Since
    the nonnegative rank is invariant under taking polars,
    Corollary~\ref{cor:cube} asserts that the nonnegative rank is indeed $6$.
    The polytope $\Delta_{5,2}$ is a $4$-dimensional polytope with $10$
    vertices and facets.  Its nonnegative rank is $9$. It was computed
    in~\cite[Table 3]{OVW14} under the ID 6014. Alternatively it can be
    computed with the \texttt{python} script in the appendix. Using, for
    example, {\tt polymake}~\cite{polymake}, removing two non-adjacent
    vertices of $\Delta_{5,2}$ yields a $4$-dimensional polytope $Q$ with $8$
    vertices and $7$ facets. Taking a $2$-fold pyramid over $Q$ gives an
    extension of $\Delta_{5,2}$ with $9$ facets. Finally, $\Delta_{6,2}$ and
    $\Delta_{6,3}$ are $5$-polytopes with $12$ facets and the SAT approach
    using the attached \textit{python} script yields the matching lower bound
    on the rectangle covering number.
\end{proof}

The hypersimplex $\Delta_{5,2}$ is special. We will examine it more closely in 
Section~\ref{sec:52} and we will, in particular, show that up to a set of
measure zero all realizations have the expected nonnegative rank $10$.
  
It is tempting to think that Proposition~\ref{prop:large_cases} might hold on
the level of rectangle covering numbers. Indeed, such a result would imply
that all \emph{combinatorial} hypersimplices are extension maximal.  As can be
checked with the \texttt{python} script in the appendix,
Proposition~\ref{prop:small_cases} extends at least to $n=8$.  In fact, the
results above imply that $\rc(\Delta_{n,k})=2n$ when $\max\{2,n-6\}\leq k\leq
\min\{n-2,6\}$.  However, the same script also shows that
$\rc(\Delta_{10,2})\leq 19$ and the following result (a corollary of \cite[Lemma~3.3]{FKPT13}) 
shows just how deceiving
the situation is in small dimensions. 

\begin{prop}\label{prop:rcbounds}
    The rectangle covering number of the $(n,k)$-hypersimplex satisfies
    \[
        n \ \leq \ \rc(\Delta_{n,k}) \ \leq \  n+\lceil\mathsf{e}\,(k+1)^2\log(n)\rceil.
    \]
\end{prop}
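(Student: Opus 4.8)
The statement contains two bounds of very different difficulty, and I would treat them separately. The lower bound is immediate from Corollary~\ref{cor:lb_rc}: the polytope $\Delta_{n,k}$ is $(n-1)$-dimensional, so $\rc(\Delta_{n,k}) \ge (n-1)+1 = n$. For the upper bound one may assume $2 \le k \le n-2$, since for $k \in \{1,n-1\}$ the hypersimplex is an $(n-1)$-simplex with $\rc = n$, well below the claimed value. Fix such a $k$ and use the box presentation~\eqref{eqn:ineq}. Then the $0/1$-support of the slack matrix of $\Delta_{n,k}$ can be read off directly: the $2n$ rows are the facets $F_i,G_i$ of~\eqref{eqn:FG}, the columns are the $k$-subsets $B \subseteq [n]$, the $(F_i,B)$-entry is positive exactly when $i \in B$, and the $(G_i,B)$-entry is positive exactly when $i \notin B$.

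I would bound $\rc(\Delta_{n,k})$ by covering the positive entries in the $F$-rows and in the $G$-rows by two separate families of rectangles, since a rectangle inside one row-block is also a rectangle of the full matrix and stacking the two covers yields a cover of $S_{\Delta_{n,k}}$. The $F$-block is handled by the $n$ \emph{stars}: for each $i \in [n]$, the set $\{F_i\} \times \{B : i \in B\}$ is a rectangle (every listed entry equals $1$), and their union is precisely the positive part of the $F$-block. Thus $n$ rectangles suffice there, which is where the leading term $n$ comes from.

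For the $G$-block I would invoke the probabilistic method, in the spirit of \cite[Lemma~3.3]{FKPT13}. First note that for every $J \subseteq [n]$ the pair $\{G_i : i \in J\} \times \{B \in \binom{[n]}{k} : B \cap J = \emptyset\}$ is a rectangle, because $i \in J$ and $B \cap J = \emptyset$ force $i \notin B$. Now choose $r := \lceil \mathsf{e}(k+1)^2\log n\rceil$ independent random sets $J_1,\dots,J_r \subseteq [n]$, each obtained by putting every element of $[n]$ into it independently with probability $p := \frac{1}{k+1}$, and let $R_t$ be the associated rectangle. For a fixed positive $G$-entry $(G_i,B)$, that is, with $i \notin B$, the events $i \in J_t$ and $J_t \cap B = \emptyset$ depend on disjoint sets of coordinates, hence
\[
    \Pr[(G_i,B) \in R_t] \ = \ \Pr[i \in J_t]\cdot\Pr[J_t \cap B = \emptyset] \ = \ p(1-p)^{k} \ > \ \frac{1}{\mathsf{e}(k+1)},
\]
using $(1-\frac{1}{k+1})^{k} > \mathsf{e}^{-1}$. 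Since the $G$-block has at most $n\binom{n}{k} < n^{k+1}$ positive entries (the strict inequality $\binom{n}{k} < n^{k}$ being valid for $k \ge 2$), a union bound shows that the probability that some positive $G$-entry avoids all of $R_1,\dots,R_r$ is at most $n\binom{n}{k}\bigl(1-\frac{1}{\mathsf{e}(k+1)}\bigr)^{r} < n^{k+1}\mathsf{e}^{-r/(\mathsf{e}(k+1))} \le 1$, since $r \ge \mathsf{e}(k+1)^2\log n$. Hence some choice of $R_1,\dots,R_r$ covers the whole $G$-block, and combined with the $n$ stars this gives a rectangle cover of $S_{\Delta_{n,k}}$ of size $n + r = n + \lceil \mathsf{e}(k+1)^2\log n\rceil$.

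The only real content here is the probabilistic covering of the $G$-block; the dimension lower bound, the reduction to $2 \le k \le n-2$, the $F$-stars, and the subadditivity of $\rc$ under row-stacking are all routine. The point to get right is the choice $p = \frac{1}{k+1}$ and the resulting estimate $p(1-p)^k > \frac{1}{\mathsf{e}(k+1)}$, which is exactly what produces the constant $\mathsf{e}$ and the factor $(k+1)^2$ (one factor $k+1$ from $1/p$, one from $\log\binom{n}{k} \le k\log n$); I do not anticipate a serious obstacle. (Running the same argument with the roles of $F$ and $G$ exchanged and using $\Delta_{n,k} \cong \Delta_{n,n-k}$ even gives the sharper bound $n + \lceil \mathsf{e}(\min\{k,n-k\}+1)^2\log n\rceil$, but this refinement is not needed for the stated proposition.)
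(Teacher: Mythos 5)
Your proof is correct and follows essentially the same approach as the paper: lower bound from the dimension (Corollary~\ref{cor:lb_rc}), upper bound by splitting the slack matrix into the $F$-block and $G$-block, covering the $F$-block with $n$ row-rectangles, and applying the probabilistic argument of \cite[Lemma~3.3]{FKPT13} with $p=\frac{1}{k+1}$ to the $G$-block. The only difference is cosmetic: you phrase the final step as a union bound on the probability that some positive $G$-entry is uncovered, whereas the paper bounds the logarithm of the expected number of uncovered entries; these are the same first-moment argument.
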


\begin{proof}
    The lower bound follows from Corollary~\ref{cor:lb_rc}.  For the upper
    bound, consider the matrix {$\Gnk{n}{k}$} whose columns are the $0/1$
    vectors with $k$ zeros.  If $2\leq k\leq n-2$, the rows of the slack
    matrix of $\Delta_{n,k}$ induced by the $G_i$ facets provide a copy of
    $\Gnk{n}{k}$, and the rows induced by $F_i$ facets a copy of
    $\Gnk{n}{n-k}$.  Thus, 
    \[
        \rc(\Delta_{n,k}) \ \leq \  \rc(\Gnk{n}{n-k})+\rc(\Gnk{n}{k}).
    \]
    Observing that $\rc(\Gnk{n}{n-k})$ is trivially bounded from above by $n$
    (take a rectangle for each row), it suffices to see that
    $\rc(\Gnk{n}{k})\leq \lceil\mathsf{e}\,(k+1)^2\log(n)\rceil$, which is
    shown in~\cite[Lemma~3.3]{FKPT13}.
    
    We reproduce their nice argument for completeness. The rows and columns of
    $\Gnk{n}{k}$ are indexed by the sets $[n]$ and $\binom{[n]}{n-k}$,
    respectively.  The non-zero elements are the pairs $(x,S) \in [n] \times
    \binom{[n]}{n-k}$ with $x\in S$. The inclusion-maximal rectangles are of
    the form 
    \[
        R_I \ := \ \left\{(x,S)\in[n]\times \binom{[n]}{n-k}\ : \ x\in I
        \text{ and } I \subseteq S\right\},
    \] 
    for $I\subseteq [n]$. We can pick an $I$ at random by selecting every
    element in $I$ independently with probability $p=\frac{1}{k+1}$.  The
    probability then that an entry $(x,S)$ with $x\in S$ is covered by $R_I$
    is $p(1-p)^k$. Hence, if we choose $r =
    \lceil\mathsf{e}\,(k+1)^2\log(n)\rceil$ such rectangles $R_I$
    independently, then probability that an entry is not covered by any of the
    rectangles is $(1-p(1-p)^k)^r$.

    The total number of non-zero entries of $\Gnk{n}{k}$ is $(n-k)\cdot
    \binom{n}{k}<n^{k+1}$. Therefore, the 
    logarithm of the expected number of non-zero entries of $\Gnk{n}{k}$ that are not covered by any rectangle is at most
    \begin{align*}\log\left((1-p(1-p)^k)^rn^{k+1}\right)&=r\log\left(1-p(1-p)^k\right)+(k+1)\log(n)\\
							&\leq - r p(1-p)^k +(k+1)\log(n)=- r \frac{k^k}{(k+1)^{k+1}} +(k+1)\log(n).
    \end{align*}
    If this is negative, then there is at least one covering with $r$ rectangles. That is, whenever
    \begin{align*} r  &>\frac{(k+1)^{k+2}}{k^k}\log(n).
    \end{align*}
    Observing that
    \begin{align*}
    \frac{(k+1)^{k+2}}{k^k}\log(n) \ = \
    (k+1)^2\left(\frac{k+1}{k}\right)^k\log(n) \ < \ 
    \mathsf{e}\,(k+1)^2\log(n)
    \end{align*}
    concludes the proof.    
\end{proof}

\section{Combinatorial hypersimplices and realization spaces}
\label{sec:relspaces}

Typically, the extension complexity is not an invariant of the combinatorial
isomorphism class of a polytope (see, for example, the situation with
hexagons~\cite{polygons2}). However, Corollary~\ref{cor:cube} states
that every combinatorial cube, independent of its realization, has the same
extension complexity. The proximity to cubes and the results in
Sections~\ref{sec:geom} and~\ref{sec:rc} raised the hope that this extends
to all hypersimplices. A \Defn{combinatorial $(n,k)$-hypersimplex} is any
polytope whose face lattice is isomorphic to that of $\Delta_{n,k}$. One
approach would have been through rectangle covering numbers but
Proposition~\ref{prop:rcbounds} refutes this approach in the strongest possible
sense.

We extend the notions of $F$- and $G$-facets from~\eqref{eqn:FG} to
combinatorial hypersimplices. The crucial property that we used in the proof
of Proposition~\ref{prop:large_cases} was that in the standard realization of
$\Delta_{n,k}$, the polyhedron bounded by hyperplanes supporting the
$G$-facets is a full-dimensional simplex. We call a combinatorial hypersimplex
\Defn{$\mathbf{G}$-generic} if the hyperplanes supporting the $G$-facets are
not projectively concurrent, that is, if the hyperplanes supporting
combinatorial $(n-1,k-1)$-hypersimplices do not meet in a point and are not
parallel to a common line.  We define the notion of
\Defn{$\mathbf{F}$-generic} hypersimplices likewise and we simply write
\Defn{$\mathbf{FG}$-generic} if a hypersimplex is $F$- and $G$-generic.

Now, if a combinatorial hypersimplex $P$ is $G$-generic, then there is an
admissible projective transformation that makes the polyhedron induced by the
$G$-facets bounded. To find such a transformation, one can proceed as follows:
translate $P$ so that it contains $0$ in the interior, then take the polar
$P^\circ$ and translate it so that the origin belongs to the interior of the
convex hull of the $G$-vertices. This is possible because $G$-genericity
implies that these vertices span a full-dimensional simplex. Taking the polar
again yields a polytope $P'$ that is projectively equivalent to $P$.  Since
projective transformations leave the extension complexity invariant, the proof
of Proposition~\ref{prop:large_cases}, almost verbatim, carries over to
$FG$-generic hypersimplices.

Indeed, with the upcoming Lemma~\ref{lem:generic}, it is straightforward to
verify that $F$-facets of an $FG$-generic $(n,k)$-hypersimplex with $2k\geq n$
are again $FG$-generic; and the same works with $G$-facets when $2k\leq n$.
Hence, one can apply the inductive reasoning of the $k=2$ case of
Proposition~\ref{prop:large_cases} and together with
Proposition~\ref{prop:small_cases}, this proves the following theorem.

\begin{thm}\label{thm:main_generic}
    If $P$ is an $FG$-generic combinatorial $(n,k)$-hypersimplex with $n\geq
    6$ and $2\leq k\leq n-2$, then $\rkN(P)=2n$.
\end{thm}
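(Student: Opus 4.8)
The plan is to bootstrap Theorem~\ref{thm:main_generic} from Proposition~\ref{prop:large_cases} and Proposition~\ref{prop:small_cases} by an induction on $n$, running essentially the same argument as in the proof of Proposition~\ref{prop:large_cases} but with the hyperplanes supporting the standard $F$- and $G$-facets replaced by the hyperplanes supporting the $F$- and $G$-facets of an arbitrary combinatorial realization. The base case $n=6$ (with $2\le k\le 4$) is handled by Proposition~\ref{prop:small_cases}, since for $n\le 6$ the nonnegative rank of every combinatorial hypersimplex equals its rectangle covering number, which is a combinatorial invariant; so there $\rkN(P)=2n$ regardless of the realization.

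For the inductive step, suppose $P$ is an $FG$-generic combinatorial $(n,k)$-hypersimplex with $n\ge 7$. Assume $2k\ge n$ (the case $2k\le n$ is symmetric, using $G$-facets in place of $F$-facets and $\Delta_{n,k}\cong\Delta_{n,n-k}$). The first step is the reduction already sketched in the excerpt: since $P$ is $G$-generic, there is an admissible projective transformation bringing $P$ to a realization $P'$ in which the polyhedron cut out by the hyperplanes supporting the $G$-facets is bounded; as the extension complexity and the combinatorial type are projectively invariant, we may assume $P=P'$. The second step invokes Lemma~\ref{lem:generic} to conclude that each $F$-facet of $P$ is itself an $FG$-generic combinatorial $(n-1,k)$-hypersimplex — this is where the hypothesis $2k\ge n$ is used, guaranteeing $2k\ge n>n-1$ so the inductive hypothesis applies to the facet and gives $\rkN(F_i)=2(n-1)=2n-2$. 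The third step is to rerun the argument of Proposition~\ref{prop:large_cases} verbatim: take a minimal extension $\ext P=\{y:\ell_i(y)\ge 0,\ i=1,\dots,M\}$, $M=\rkN(P)$; if for some $i$ the preimage $\ext F_i=\pi^{-1}(F_i)\cap\ext P$ fails to be a facet of $\ext P$, then by Lemma~\ref{lem:facet}-style counting $f(\ext P)\ge \rkN(F_i)+2=2n$ and we are done; otherwise $\ell_1,\dots,\ell_n$ cut out facets $\ext F_1,\dots,\ext F_n$ mapping onto the $F$-facets, and one shows that $\ext Q:=\{y:\ell_i(y)\ge 0,\ i=n+1,\dots,M\}$ is bounded. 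Boundedness follows exactly as before: $\pi(\ext Q)$ is contained in the polyhedron $Q$ bounded by the $G$-hyperplanes, which is bounded by $G$-genericity; the hyperplanes supporting the faces $\ext G_i$ of $\ext Q$ are preimages of the $G$-hyperplanes because $G_i$ is distinct from every $F_j$ as facets of the combinatorial hypersimplex; and the lineality space of $\ext Q$ lies in $\ker\pi$, which is killed by the $\ell_i$ with $i\le n$, so $\ext P=\ext Q\cap\{\ell_i\ge 0:i\le n\}$ bounded forces $\ext Q$ bounded. Hence $f(\ext Q)\ge m+1\ge n$ and $\rkN(P)=f(\ext P)=n+f(\ext Q)\ge 2n$. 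The reverse inequality $\rkN(P)\le f(P)=2n$ is automatic.

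The main obstacle — and the reason this does not follow instantly from Proposition~\ref{prop:large_cases} — is keeping track of what combinatorial data survives when we pass to a facet and making sure the inductive hypothesis is applicable there. Concretely, one must verify that the $F$- and $G$-facets of a combinatorial hypersimplex are indeed governed by the combinatorial incidence pattern of Equation~\eqref{eqn:FG} (so that, e.g., $F_i$ is combinatorially an $(n-1,k)$-hypersimplex and $G_i$ an $(n-1,k-1)$-hypersimplex, and $F_i$, $G_i$ are disjoint), and that $FG$-genericity is inherited by $F$-facets precisely when $2k\ge n$. This is the content of Lemma~\ref{lem:generic} (together with the observation that every hypersimplex is $F$- or $G$-generic), and once that lemma is in hand the geometric core of the argument is an almost word-for-word transcription of Proposition~\ref{prop:large_cases}, with "the hyperplane supporting the standard facet $\{x_i=1\}$" systematically replaced by "the hyperplane supporting the $G$-facet $G_i$ of the given realization".
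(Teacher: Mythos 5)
Your proof follows the paper's approach closely: induct on $n$ with base case $n=6$ (where Proposition~\ref{prop:small_cases} applies to every combinatorial hypersimplex because the rectangle covering number depends only on the face lattice), and for $n\ge 7$ apply a projective transformation that bounds the polyhedron cut out by the $G$-hyperplanes and then rerun the $k=2$ argument of Proposition~\ref{prop:large_cases}. The paper presents exactly this inductive scheme, citing Lemma~\ref{lem:generic} and calling the genericity transfer to facets ``straightforward''.

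The one place where your exposition is slightly off (and which the paper also leaves compressed) is the attribution of the $FG$-genericity of the $F$-facets entirely to Lemma~\ref{lem:generic}. For an $F$-facet $F_i\cong\Delta_{n-1,k}$ with $2k\ge n$, Lemma~\ref{lem:generic} does yield $G$-genericity, since $2k\ge n>n-3=(n-1)-2$; your ``$2k\ge n>n-1$'' should read ``$2k\ge n>n-3$''. But it does \emph{not} yield $F$-genericity of $F_i$ in the range $2k>n$: that requires the separate observation that $F$-genericity is inherited by $F$-facets. Concretely, in the homogenized picture $F$-genericity of $P$ says the linear forms $f_1,\dots,f_n$ are linearly independent; restricting to $f_i^\perp$, a relation $\sum_{j\ne i}c_jf_j|_{f_i^\perp}=0$ lifts to $\sum_{j\ne i}c_jf_j=cf_i$, forcing all $c_j=0$, so the restrictions stay independent and $F_i$ is $F$-generic. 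That small geometric argument, combined with Lemma~\ref{lem:generic}, is what makes $F$-facets $FG$-generic when $2k\ge n$ (dually, $G$-facets when $2k\le n$). Making it explicit would tighten your write-up.
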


The following lemma states that \emph{every} combinatorial hypersimplex is
either $F$-generic or $G$-generic.

\begin{lem}\label{lem:generic}
    Every combinatorial $(n,k)$-hypersimplex is
    \begin{enumerate}[\rm (i)]
        \item $F$-generic if $2k<n+2$, and 
        \item $G$-generic if $2k>n-2$.
    \end{enumerate}
    In particular, every combinatorial $(n,k)$-hypersimplex is $FG$-generic
    for $n-2<2k<n+2$. 
\end{lem}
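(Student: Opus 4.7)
\textbf{Proof proposal for Lemma~\ref{lem:generic}.}
My plan is to prove only~(i), since the duality $\Delta_{n,k}\cong\Delta_{n,n-k}$ realized by $x\mapsto\mathbf{1}-x$ interchanges the $F$- and $G$-facets, so~(ii) is exactly~(i) applied to $(n,n-k)$: the substitution transforms the hypothesis $2k>n-2$ into $2(n-k)<n+2$. The ``in particular'' statement is then the intersection of the two ranges.

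For~(i), fix any full-dimensional realization $P\subset\R^{n-1}$ and, for each $F$-facet $F_i$, let $\widehat{\ell}_i^F$ be the linear form on $\R^n$ obtained by homogenizing the affine equation of the hyperplane supporting $F_i$, normalized so that $\widehat{\ell}_i^F>0$ on the lift of the interior of $P$. Projective concurrence of the $n$ supporting hyperplanes of the $F$-facets is equivalent to the linear dependence of $\widehat{\ell}_1^F,\dots,\widehat{\ell}_n^F$ in $(\R^n)^*$: $n$ linear forms in $n$ variables vanish at a common projective point iff they are linearly dependent. Since $P$ is full-dimensional, its lifted vertices span $\R^n$, so any linear relation among the $\widehat{\ell}_i^F$ can be detected by evaluating at every vertex.

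The core of the argument is a sign-pigeonhole step. Suppose $\sum_{i=1}^n \lambda_i\widehat{\ell}_i^F=0$ with $\lambda\neq\mathbf{0}$. By the combinatorial isomorphism, the lifted vertex $v_S$ associated to a $k$-subset $S\subseteq[n]$ satisfies $\widehat{\ell}_i^F(v_S)=0$ for $i\notin S$ and $c_S^i:=\widehat{\ell}_i^F(v_S)>0$ for $i\in S$. Evaluating the relation at $v_S$ yields $\sum_{i\in S}\lambda_i c_S^i=0$ for every $k$-subset $S$. Partitioning $[n]=I_+\sqcup I_-\sqcup I_0$ according to the sign of $\lambda_i$, and using that the $c_S^i$ are strictly positive, this equation can hold only if either $S\subseteq I_0$ or $S$ meets both $I_+$ and $I_-$.

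The combinatorial conclusion is immediate. Assume without loss of generality $I_+\neq\emptyset$. If $|I_+\cup I_0|\geq k$, then picking any $i\in I_+$ and extending to a $k$-subset $S\subseteq I_+\cup I_0$ containing $i$ gives an $S$ that meets $I_+$ but misses $I_-$, contradicting the dichotomy above. Hence $|I_+\cup I_0|<k$, i.e., $|I_-|\geq n-k+1$; in particular $I_-\neq\emptyset$, so the symmetric argument forces $|I_+|\geq n-k+1$. Summing, $n\geq|I_+|+|I_-|\geq 2(n-k+1)$, equivalently $2k\geq n+2$, contradicting the hypothesis $2k<n+2$. The only step that really uses anything nontrivial is the reduction from the algebraic relation among the $\widehat{\ell}_i^F$ to the family of vertex equations, and this is justified by the full-dimensionality of $P$; the rest is a clean sign-counting argument.
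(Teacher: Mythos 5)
Your proof is correct, and it takes a genuinely different route from the paper's. The paper argues in the polar: there, $G$-genericity becomes the statement that the convex hull $Q$ of the $n$ polar vertices corresponding to the $G$-facets is full-dimensional; the vertex--facet incidences show every $k$-subset of these vertices spans a face, so $Q$ is $k$-neighborly, and the paper then invokes Gr\"unbaum's theorem that a $k$-neighborly $d$-polytope with $2k>d$ is a simplex. You instead stay on the primal side: you translate projective concurrence of the $n$ supporting hyperplanes into a linear dependence $\sum_i\lambda_i\widehat{\ell}_i^F=0$ of the homogenized facet forms (which is exactly the determinantal criterion the paper records separately in Lemma~\ref{lem:G_generic}), evaluate at every vertex, and run a sign/pigeonhole argument on the support pattern of the slack matrix: each $k$-subset $S$ must either avoid the support of $\lambda$ or meet both $I_+$ and $I_-$, which forces $|I_+|,|I_-|\ge n-k+1$ and hence $2k\ge n+2$. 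This is self-contained and elementary --- in effect you have inlined a proof of precisely the instance of Gr\"unbaum's neighborliness theorem that the paper cites, since your sign partition of $\lambda$ is the Radon-type dependence hiding inside that result --- at the cost of being a bit longer. Both proofs dispose of the other half of the statement by the same $k\leftrightarrow n-k$ duality (you reduce (ii) to (i), the paper reduces (i) to (ii); immaterial). One tiny remark: the appeal to full-dimensionality of $P$ is not needed for the direction you actually use, since a vanishing linear combination trivially vanishes at every lifted vertex.
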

\begin{proof}
    The two statements are dual under the affine equivalence $\Delta_{n,k} \cong
    \Delta_{n,n-k}$. Hence, we only prove the second statement. For this, let
    $P^\circ$ be polar to a combinatorial $(n,k)$-hypersimplex with $2k >
    n-2$. Thus, $P^\circ$ is a polytope of dimension $n-1$ with $2n$ vertices
    $f_1,\dots,f_n$ and $g_1,\dots,g_n$ corresponding to the $F$- and
    $G$-facets. In this setting, $G$-genericity means that the polytope $Q =
    \conv(g_1,\dots,g_n)$ is of full dimension $n-1$.
    From the combinatorics of $(n,k)$-hypersimplices, we infer that for every
    $I \subseteq [n]$ with $|I| = k$, the set 
    \[
        \conv \left( \{g_i : i \in I\} \cup \{f_i : i \not\in I \} \right)
    \]
    is a face of $P^\circ$ and hence $\conv(g_i : i \in I)$ is a face of $Q$.
    Thus $Q$ is a \emph{$k$-neighborly} polytope with $2k \ge n-1 \ge \dim Q$.
    It follows from~\cite[Thm.~7.1.4]{Grunbaum} that $Q$ is a simplex and
    thus of dimension $n-1$.
\end{proof}

Although we will later see that not every combinatorial hypersimplex is
$FG$-generic (cf. Proposition~\ref{prop:singular62}), this has some
immediate consequences for the extension complexity of combinatorial
hypersimplices.  The following corollary can be deduced from
Figure~\ref{fig:hypersimplex_chart}, using Corollary~\ref{cor:face} to
navigate along the arrows to the (thick) diagonal.

\begin{repcor}{cor:XCcombinatorial}
    If $P$ is a combinatorial $(n,k)$-hypersimplex with $n\geq 6$, then
    \[
        \renewcommand\arraystretch{1.2}
        \rkN(P) \ \geq \  
        \left\{
        \begin{array}{rrr@{}c@{}l}
            n+2k+1,& \text{ if} & 2 \ \le \ & k & \ < \ \ffloor{n}{2},\\
            2n,& \text{ if}  & \ffloor{n}{2} \ \le \ &k & \ \le \
                                        \fceil{n}{2}, \text{ and}\\
            n+2(n-k)+1,& \text{ if} & \fceil{n}{2} \ < \ &k & \ \le \  n-2.
        \end{array}
        \right.
    \]
\end{repcor}
\begin{proof}
    For $\ffloor{n}{2} \le k \le \fceil{n}{2}$, we get the result as a
    combination of Theorem~\ref{thm:main_generic} with
    Lemma~\ref{lem:generic}.  If $P$ is a combinatorial $(n,k)$-hypersimplex
    with $k<\ffloor{n}{2}$, then $P$ has a $2k$-dimensional face $Q$
    isomorphic to $\Delta_{2k+1,k}$ (by successively taking $F$-facets). By
    the previous case, $\rkN(Q)=2(2k+1)$. By Corollary~\ref{cor:face},
    $\rkN(P)\geq n+2k+1$. The case $k>\fceil{n}{2}$ follows symmetrically.
\end{proof}

\begin{figure}[htpb]
 
\makeatletter

\pgfdeclareshape{rectangle with diagonal fill}
{
    \inheritsavedanchors[from=rectangle]
    \inheritanchorborder[from=rectangle]
    \inheritanchor[from=rectangle]{north}
    \inheritanchor[from=rectangle]{north west}
    \inheritanchor[from=rectangle]{north east}
    \inheritanchor[from=rectangle]{center}
    \inheritanchor[from=rectangle]{west}
    \inheritanchor[from=rectangle]{east}
    \inheritanchor[from=rectangle]{mid}
    \inheritanchor[from=rectangle]{mid west}
    \inheritanchor[from=rectangle]{mid east}
    \inheritanchor[from=rectangle]{base}
    \inheritanchor[from=rectangle]{base west}
    \inheritanchor[from=rectangle]{base east}
    \inheritanchor[from=rectangle]{south}
    \inheritanchor[from=rectangle]{south west}
    \inheritanchor[from=rectangle]{south east}

    \inheritbackgroundpath[from=rectangle]
    \inheritbeforebackgroundpath[from=rectangle]
    \inheritbehindforegroundpath[from=rectangle]
    \inheritforegroundpath[from=rectangle]
    \inheritbeforeforegroundpath[from=rectangle]

    \behindbackgroundpath{%
        \pgfextractx{\pgf@xa}{\southwest}%
        \pgfextracty{\pgf@ya}{\southwest}%
        \pgfextractx{\pgf@xb}{\northeast}%
        \pgfextracty{\pgf@yb}{\northeast}%
        \ifpgf@diagonal@lefttoright
            \def\pgf@diagonal@point@a{\pgfpoint{\pgf@xa}{\pgf@yb}}%
            \def\pgf@diagonal@point@b{\pgfpoint{\pgf@xb}{\pgf@ya}}%
        \else
            \def\pgf@diagonal@point@a{\southwest}%
            \def\pgf@diagonal@point@b{\northeast}%
        \fi
        \pgfpathmoveto{\pgf@diagonal@point@a}%
        \pgfpathlineto{\northeast}%
        \pgfpathlineto{\pgfpoint{\pgf@xb}{\pgf@ya}}%
        \pgfpathclose
        \ifpgf@diagonal@lefttoright
            \color{\pgf@diagonal@top@color}%
        \else
            \color{\pgf@diagonal@bottom@color}%
        \fi
        \pgfusepath{fill}%
        \pgfpathmoveto{\pgfpoint{\pgf@xa}{\pgf@yb}}%
        \pgfpathlineto{\southwest}%
        \pgfpathlineto{\pgf@diagonal@point@b}%
        \pgfpathclose
        \ifpgf@diagonal@lefttoright
            \color{\pgf@diagonal@bottom@color}%
        \else
            \color{\pgf@diagonal@top@color}%
        \fi
        \pgfusepath{fill}%
    }
}

\newif\ifpgf@diagonal@lefttoright
\def\pgf@diagonal@top@color{white}
\def\pgf@diagonal@bottom@color{gray!30}

\def\pgfsetdiagonaltopcolor#1{\def\pgf@diagonal@top@color{#1}}%
\def\pgfsetdiagonalbottomcolor#1{\def\pgf@diagonal@bottom@color{#1}}%
\def\pgfsetdiagonallefttoright{\pgf@diagonal@lefttorighttrue}%
\def\pgfsetdiagonalrighttoleft{\pgf@diagonal@lefttorightfalse}%

\tikzoption{diagonal top color}{\pgfsetdiagonaltopcolor{#1}}
\tikzoption{diagonal bottom color}{\pgfsetdiagonalbottomcolor{#1}}
\tikzoption{diagonal from left to right}[]{\pgfsetdiagonallefttoright}
\tikzoption{diagonal from right to left}[]{\pgfsetdiagonalrighttoleft}
\makeatother

\begin{tikzpicture}[scale=1.1]
\foreach \n in {3,...,9} {
  \foreach \j [evaluate=\j as \k using int(\j-1)] in {2,...,\n} {
  \pgfmathparse{int(2*\k-\n)}
  \ifnum\pgfmathresult>1
	\node[rectangle, draw, very thick, fill=red!25, draw=red] (\n\k) at (2*\n,\n/2-\k) {$\Delta_{\n,\k}$};
  \else
	  \ifnum\pgfmathresult<-1
		 \node[rectangle, draw, very thick, fill=blue!25, draw=blue] (\n\k) at (2*\n,\n/2-\k) {$\Delta_{\n,\k}$};
 	 \else
    		\node[rectangle with diagonal fill, draw, very thick, diagonal top color=blue!25, diagonal bottom color=red!25,draw=black] (\n\k) at (2*\n,\n/2-\k) {$\Delta_{\n,\k}$};
  	\fi
  \fi
  }
}

  \foreach \n [evaluate=\n as \m using int(\n+1)] in {3,...,8} {
  \foreach \j [evaluate=\j as \k using int(\j-1)] in {2,...,\n} {
     \draw [latex-,red, very thick] (\n\k) -- node[midway,fill=white] {$G$} (\m\j);
     \draw [latex-,blue, very thick] (\n\k) -- node[midway,fill=white] {$F$}(\m\k);
  }
}

\end{tikzpicture}
\caption{The smaller hypersimplices. Arrows represent the structure of the $F$- and $G$-facets. Those in the upper half are $F$-generic, those in the lower half are $G$-generic and those in the middle are $FG$-generic.}\label{fig:hypersimplex_chart}
\end{figure}

\subsection{Realization spaces of hypersimplices}
A combinatorial $(n,k)$-hypersimplex is a polytope $P \subset \R^{n-1}$ given
by $2n$ linear inequalities $f_i(\x) = f_{i0} + \sum_j f_{ij}x_j \ge 0$ and
$g_i(\x) = g_{i0} + \sum_j g_{ij}x_j \ge 0$ for $i=1,\dots,n$ such that $P$ is
combinatorially isomorphic to $\Delta_{n,k}$ under the correspondence
\begin{align*}
    F_i = \{ \x \in \Delta_{n,k} : x_i = 0\} &\quad \longrightarrow \quad \{ \x
    \in P : f_i(\x) = 0\}, \text{ and} \\
    G_i = \{ \x \in \Delta_{n,k} : x_i = 1\} &\quad \longrightarrow \quad \{ \x
    \in P : g_i(\x) = 0\}.
\end{align*}
Of course, the inequalities are unique only up to a
positive scalar and hence the group $(\R^{2n}_{>0},\cdot)$ acts on ordered
collections of linear inequalities furnished by all combinatorial
$(n,k)$-hypersimplices in $\R^{n-1}$. We only want to consider realizations that
are genuinely distinct and it is customary to identify two realizations of
$\Delta_{n,k}$ that differ by an affine transformation or an (admissible) projective transformation; see,
for example, \cite[Sect.  2.1]{RichterGebert1997}
or~\cite[Sect.~8.1]{OrientedMatroids1993}. We do the latter. However, care has to be taken as
the projective linear group does not act on the realization space. To that
end, we identify $P$ with its \Defn{homogenization} 
\[
    \homog(P) \ := \ \cone( \{1 \} \times P)  \ = \
    \left\{ \binom{x_0}{\x} \in \R^n : 
    \begin{array}{r@{ \ \ge \ }l}
    g_{0i}x_0 + \dots + g_{ni}x_n & 0 \\
    f_{0i}x_0 + \dots + f_{ni}x_n & 0 \\
    \end{array}
    \text{ for } i=1,\dots,n
    \right\}.
\]
Under this identification, one verifies that two $(n,k)$-hypersimplices $P$ and
$P'$ are projectively equivalent if and only if $\homog(P)$ and $\homog(P')$
are linearly isomorphic. The \Defn{projective realization space} $\Rel_{n,k}$
of combinatorial $(n,k)$-hypersimplices is the set of matrices
$(g_1,\dots,g_n,f_1,\dots,f_n) \in \R^{n \times 2n}$ that yield cones
isomorphic to the homogenization of the $(n,k)$-hypersimplex modulo the action
of $\mathrm{GL}_n \times \R^{2n}_{>0}$. We write $g_i^\perp$ and $f_i^\perp$
for the facet defining hyperplanes of $\homog(P)$ corresponding to $g_i$ and
$f_i$, respectively.

Let us fix $1 < k \le \frac{n}{2}$ and let $P$ be a combinatorial
$(n,k)$-hypersimplex. By Lemma~\ref{lem:generic}, the $F$-facets are generic
and hence bound a simplex $Q$ up to projective transformation. That is,
\[
    \homog(Q) \ = \ \{ \x \in \R^n : 
    f_{0i}x_0 + \dots + f_{ni}x_n \ge 0 \text{ for } i=1,\dots,n \} \ \cong \
    \R^n_{\ge 0}
\]
by a linear transformation. Hence, we can choose a matrix representing
$\homog(P)$ of the form
\begin{equation}\label{eqn:G_proj}
    \left(
   \begin{array}{cccc|ccc}
                | &| & &| & 1 & & \\
                g_1&g_2&\cdots&g_n & & \ddotsHacked &\\
                | &| & & | & & &1\\
    \end{array}
    \right).
\end{equation}
Modulo positive column and row scaling, the matrix $(g_1,\dots,g_n)$ uniquely determines
$P$ up to projective transformations. 
Indeed, by using suitable positive column scaling on the $f_i$'s, 
the effect of positively scaling rows of~\eqref{eqn:G_proj} leaves
the identity matrix of~\eqref{eqn:G_proj} invariant. 

For example, a representative of the standard realization of $\Delta_{n,k}$ is given by the $G$-matrix:
\begin{equation}\label{eqn:standardGmatrix}
\left(
       \begin{array}{c c c c}
	-k+1 & 1 & \cdots  & 1\\
	1 & -k+1 & \cdots  & 1\\
	\vdots & \vdots& \ddots  & \vdots   \\ 
	1 & 1& \cdots  & -k+1	
       \end{array}\right).
\end{equation}

The $G$-matrix also gives us a condition for $G$-genericity. The hyperplanes
to $G$-facets are projectively concurrent if there is a nonzero element in
the kernel of the $G$-matrix. This proves the following useful criterion.

\begin{lem}\label{lem:G_generic}
    Let $P$ be a combinatorial $(n,k)$-hypersimplex with $k \le \frac{n}{2}$.
    Then $P$ is $G$-generic if and only if $\det(g_1,\dots,g_n) \neq 0$.
\end{lem}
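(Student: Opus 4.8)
The plan is to translate $G$-genericity into a spanning condition on the vectors $g_1,\dots,g_n$ by passing to the homogenization, and then to observe that this spanning condition is precisely the non-vanishing of the determinant of the $G$-matrix.

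First I would invoke the hypothesis: since $k\le\frac n2$ we have $2k\le n<n+2$, so Lemma~\ref{lem:generic}(i) applies and $P$ is $F$-generic. Consequently, as in the discussion preceding~\eqref{eqn:G_proj}, we may pick a representative of $\homog(P)$ in which the $F$-normals form the identity block and the $G$-facets lie in the linear hyperplanes $g_i^\perp=\{x\in\R^n:\langle g_i,x\rangle=0\}$, $i=1,\dots,n$. Because $G$-genericity is a projective invariant of $P$, and because $\det(g_1,\dots,g_n)$ changes only by a positive factor under the admissible positive row and column rescalings (as noted just before the statement), it is enough to establish the equivalence for this particular representative.

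Next I would unwind the definition of $G$-genericity in homogeneous coordinates. A proper point $p\in\R^{n-1}$, respectively a direction $d\in\R^{n-1}$, corresponds to the nonzero vector $(1,p)$, respectively $(0,d)$, in $\R^n$; and such a vector lies in $g_i^\perp$ exactly when the affine hyperplane $\{g_i(\x)=0\}$ passes through $p$, respectively is parallel to the line spanned by $d$. Hence the hyperplanes supporting the $G$-facets are projectively concurrent — i.e.\ they meet in a point or are all parallel to a common line — if and only if $\bigcap_{i=1}^{n} g_i^\perp\neq\{\0\}$. This intersection is nontrivial if and only if $g_1,\dots,g_n$ fail to span $\R^n$, that is, if and only if $\det(g_1,\dots,g_n)=0$. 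Negating gives the claim: $P$ is $G$-generic if and only if $\det(g_1,\dots,g_n)\neq 0$.

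There is no real obstacle here; the proof is essentially this dictionary between affine and homogeneous coordinates. The only point deserving a line of care is the verification that the two clauses in the definition of projective concurrency (a common proper point versus a common direction at infinity) are jointly captured by the single linear-algebraic condition $\bigcap_i g_i^\perp\neq\{\0\}$, which is exactly what the homogenization achieves.
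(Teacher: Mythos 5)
Your proof is correct and is essentially the same argument the paper gives, just written out in full: the paper's one-line justification ("the hyperplanes to $G$-facets are projectively concurrent if there is a nonzero element in the kernel of the $G$-matrix") is exactly the dictionary you spell out between projective concurrency of the affine hyperplanes and nontriviality of $\bigcap_i g_i^\perp$ in the homogenization, which in turn is equivalent to $\det(g_1,\dots,g_n)=0$. Your added remarks — that $F$-genericity from Lemma~\ref{lem:generic}(i) licenses the normal form~\eqref{eqn:G_proj}, and that the vanishing of the determinant is invariant under the admissible positive rescalings — are good hygiene but do not change the route.
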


Notice that the principal $k$-minors of~\eqref{eqn:standardGmatrix} vanish. 
This is common to all
combinatorial hypersimplices.  Indeed, the combinatorics of hypersimplices
dictates that for any $I \subseteq [n]$ with $|I| = k$, the intersection of
the hyperplanes $\{g_i^\perp : i \in I\}$ and $\{f_i^\perp : i \not\in I\}$
with $\homog(P)$ is a face of dimension $1$.  By our choice of
$f_1,\dots,f_n$, this is equivalent to
\begin{equation}\label{eqn:G_pm}
    \rk(G_{II}) \ = \ k-1 \text{ for all } I \subseteq [n] \text{ with } |I| =
    k.
\end{equation}

Notice that these are the only equality constrains for $\Rel_{n,k}$.
The remaining conditions are only strict inequalities coming vertex-facet-nonincidences.

The \emph{complex} variety of $n$-by-$n$ matrices with vanishing principal
$k$-minors was studied by Wheeler in~\cite{Wheeler15}, which turns out to be a
rather complicated object. For example, it is not known if the variety is
irreducible and even the dimension is only known in certain cases. The
following result yields an upper bound on the dimension of $\Rel_{n,k}$.

\begin{thm}\label{thm:Rel_UB}
     For $2<k< n-2$, every $P \in \Rel_{n,k}$ is completely identified by
     a realization of $\Delta_{n-1,k-1}$ unique up to affine transformation.
    In particular, for $2\leq k\leq n-2$, the dimension of $\Rel_{n,k}$ is at
    most $\binom{n-1}{2}$.
\end{thm}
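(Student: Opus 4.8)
The statement packages a reconstruction claim with a dimension estimate, and the estimate follows from the reconstruction by induction, so the substance is the reconstruction. By the affine isomorphism $\Delta_{n,k}\cong\Delta_{n,n-k}$ we may assume $k\le\frac n2$; then $\Delta_{n-1,k-1}$ is again a proper hypersimplex, and by Lemma~\ref{lem:generic} both $P$ and each of its $G$-facets $G_n\cong\Delta_{n-1,k-1}$ are $F$-generic. Fix such a facet $G_n$. Since it lies in an affine hyperplane, its realization is canonically defined only up to affine transformations, i.e.\ it is a point $[G_n]$ of the \emph{affine} realization space of $\Delta_{n-1,k-1}$, whose dimension is $\dim\Rel_{n-1,k-1}+(n-2)$. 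The plan is to reconstruct $P$ from this datum; injectivity of $[P]\mapsto[G_n]$ then gives $\dim\Rel_{n,k}\le\dim\Rel_{n-1,k-1}+(n-2)$, and the claimed bound $\binom{n-1}{2}$ falls out of the identity $\binom{n-2}{2}+(n-2)=\binom{n-1}{2}$ together with the induction below.

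For the reconstruction I would fix homogeneous coordinates on $\R^n$ as follows. The $F$-genericity of the facet $G_n$ means exactly that the $n$ hyperplanes $g_n^\perp, f_1^\perp,\dots,f_{n-1}^\perp$ are in general position; so we may take $g_n=e_n$ and $f_j=e_j$ for $j<n$, while $f_n=(a_1,\dots,a_{n-1},1)$ for unknowns $a_j$. Then $g_n^\perp=\{x_n=0\}$, and inside $g_n^\perp$ the facet $G_n$ is cut out by the coordinate hyperplanes $\{y_j=0\}$ — the traces $F_j\cap G_n$ — together with hyperplanes $\{h_i\cdot y=0\}$ for $i<n$, where $H=(h_1,\dots,h_{n-1})$ is a $G$-matrix of the realization of $G_n$. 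Each remaining $G$-facet hyperplane $g_i^\perp$ ($i<n$) must contain the ridge $G_i\cap G_n$, hence lies in the pencil through $g_i^\perp\cap g_n^\perp$; thus $g_i=(h_i,\lambda_i)$ for a single unknown $\lambda_i$. So $\homog(P)$ is recovered from $H$ together with the $2(n-1)$ transverse parameters $\lambda_1,\dots,\lambda_{n-1},a_1,\dots,a_{n-1}$, up to the one-parameter torus $(\lambda,a)\mapsto(r\lambda,r^{-1}a)$ that rescales the last coordinate.

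It remains to constrain $(\lambda,a)$. The only equality constraints on $\Rel_{n,k}$ are the rank conditions~\eqref{eqn:G_pm}, i.e.\ that $k$ of the $g_i^\perp$ together with the complementary $f_j^\perp$ meet $\homog(P)$ in a ray. Those indexed by a set $I\ni n$ hold automatically: the relevant matrix is block triangular over the block $H_{I\setminus\{n\},\,I\setminus\{n\}}$ and the scalar $1$, so it has rank $(k-2)+1=k-1$, using that $H$ realizes $\Delta_{n-1,k-1}$. For $I\subseteq[n-1]$ with $|I|=k$, cofactor expansion along the coordinate columns turns the condition into the scalar equation $\det(H_{II})=\sum_{p,q\in I}\lambda_p\,(\operatorname{adj} H_{II})_{qp}\,a_q$, bilinear in $(\lambda,a)$; equivalently, these equations say precisely that the $n\times n$ matrix $\bigl(\begin{smallmatrix}H&a\\ \lambda^{\top}&1\end{smallmatrix}\bigr)$ has vanishing principal $(k+1)$-minors on all index sets containing the last index. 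The core of the argument is to show that, given $H$, this system — $\binom{n-1}{k}$ equations, hugely over-determined for $2<k<n-2$ — cuts the transverse parameters down to a variety of dimension at most $n-1$ (and, for the sharper uniqueness statement, exactly to the data recording the hyperplane at infinity of $\aff(G_n)$). I would do this by choosing, for each index, a nested family of sets $I$ along which the equations become triangular and solving successively for the $\lambda_i$ and the $a_i$; this is where the combinatorial pattern of admissible $k$-subsets of a hypersimplex enters, and it is intimately tied to Wheeler's study~\cite{Wheeler15} of matrices with vanishing principal $k$-minors. This is the step I expect to be the main obstacle.

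Granting this, the dimension bound follows by induction on $k$. For $k=2$ — and symmetrically for $k=n-2$, via $\Rel_{n,k}\cong\Rel_{n,n-k}$ — Theorem~\ref{thm:dim_rel_space} gives $\dim\Rel_{n,2}=\binom{n-1}2$. For $2<k\le\frac n2$ one has $2<k<n-2$, so $\dim\Rel_{n,k}\le\dim\Rel_{n-1,k-1}+(n-2)$; since $2\le k-1<\frac{n-1}2$, the inductive hypothesis yields $\dim\Rel_{n-1,k-1}\le\binom{n-2}2$, and $\binom{n-2}2+(n-2)=\binom{n-1}2$ closes the induction. The remaining range $\frac n2<k\le n-2$ reduces to the above via $\Rel_{n,k}\cong\Rel_{n,n-k}$. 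Hence $\dim\Rel_{n,k}\le\binom{n-1}2$ for all $2\le k\le n-2$.
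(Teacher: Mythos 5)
Your route is genuinely different from the paper's: you set up explicit homogeneous coordinates, express $\homog(P)$ through the $G$-matrix $H$ of a fixed $G$-facet together with $2(n-1)$ transverse parameters $(\lambda,a)$, and try to show that the vanishing principal $k$-minor conditions~\eqref{eqn:G_pm} pin these parameters down. But you also flag the gap yourself, and it is real: the claim that the $\binom{n-1}{k}$ bilinear equations in $(\lambda,a)$ cut the transverse parameters to a variety of dimension at most $n-1$ is not proven, only gestured at via a ``nested family of $I$'s along which the equations triangularize.'' Nothing in the proposal establishes that such a triangularization exists, that the relevant adjugate entries are nonzero so that the resulting scalar equations actually solve for $\lambda_i$ or $a_i$, or that the over-determined system is consistent in the way you need. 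This is not a detail to be filled in routinely --- the complex variety of matrices with vanishing principal $k$-minors is poorly understood (Wheeler's work shows it has components of unexpected dimensions, and even irreducibility is open), which is exactly why a naive ``solve the minors successively'' argument is dangerous here. As it stands the substance of the theorem is unproven.

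The paper sidesteps the determinantal calculation entirely with a synthetic argument. After a projective transformation making $\aff(F_1)$ and $\aff(G_1)$ parallel (which fixes $G_1$ up to affine equivalence), the ridges $F_1\cap F_i$, $F_1\cap G_i$ become parallel to $G_1\cap F_i$, $G_1\cap G_i$, so $G_1$ determines the facet normal directions of $F_1$. A theorem of Shephard (a polytope whose $2$-faces are all triangles is Minkowski-indecomposable, combined with Ziegler's Prop.~7.12) then shows that a realization of a hypersimplex is determined up to positive homothety by its facet directions --- all $2$-faces of a hypersimplex are triangles. Hence $G_1$ determines $F_1$ up to homothety, and therefore $P=\conv(F_1\cup G_1)$ up to projective transformation; the inductive dimension count you wrote down then goes through with the reconstruction step actually supplied. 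If you want to salvage your coordinate approach, the missing lemma is essentially a linear-algebraic reformulation of this indecomposability fact, and I would expect proving it directly from the minor equations to be at least as hard as invoking Shephard.
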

\begin{proof}
    Let $P$ be a combinatorial $(n,k)$-hypersimplex. By a suitable projective
    transformation, we can assume that the facet-defining hyperplanes of $F_1$
    and $G_1$ are parallel. This assumption fixes the intersection of
    $\aff(G_1)$ with the hyperplane at infinity and hence fixes $G_1$ up to an
    affine transformation. 
    
    Since $F_1$ and $G_1$ lie in parallel hyperplanes, the corresponding
    facets of $F_1$ and $G_1$ are parallel (because they are induced by the
    intersection of the same supporting hyperplanes of $\Delta_{n,k}$ with
    these two parallel hyperplanes).  
    
    A result of Shephard (see~\cite[Thm.~15.1.3, p.321]{Grunbaum}) states that
    if all the $2$-dimensional faces of a polytope $R \subset \R^d$ are
    triangles, then for any representation $R = R_1 + R_2$ of $R$ as
    Minkowski sum, there are $t_i \in
    \R^d$ and $\lambda_i \ge 0$ such that  $R_i = t_i + \lambda_i R$ for
    $i=1,2$. Now, if $Q$ and $Q'$ are \emph{normally equivalent} polytopes, i.e.\
    combinatorially equivalent and corresponding facets are parallel, and $Q$
    has only triangular $2$-faces, then, by~\cite[Prop.~7.12]{Ziegler1995},
    all $2$-faces of $Q+Q'$ are triangles as well. It follows that $Q$ and
    $Q'$ are positively homothetic.

    Since every face of a hypersimplex is a hypersimplex and $2$-dimensional
    hypersimplices are triangles, this shows that realizations of
    hypersimplices are determined up to positive homothety once their facet
    directions are determined.  In particular, this shows that given $G_1$,
    $F_1$ is determined up to a positive homothety. Hence, given $G_1$, $P$ is
    unique up to projective transformations.
    
    The bound on the dimension follows by induction on $k$. We will see in
    Theorem~\ref{thm:dim_rel_space} that $\dim\Rel_{n,2}=\binom{n-1}{2}$,
    settling the base case.  The affine group of $\R^d$ is a codimension~$d$
    subgroup of the projective group. Hence, by induction, 
    \[
        \dim\Rel_{n,k} \ \leq \  \dim\Rel_{n-1,k-1}+(n-2) \ \leq \ 
        \binom{n-2}{2}+(n-2) \ = \ \binom{n-1}{2}.\qedhere
    \] 
\end{proof}

\section{The $(n,2)$-hypersimplices}\label{sec:n2}

Although realization spaces are notoriously difficult objects and it is
generally difficult to access different realizations of a given polytope, in
the case of $(n,2)$-hypersimplices we have a simple construction and a nice
geometrical interpretation. Let us denote by $\Delta_{n-1} =
\conv(e_1,\dots,e_n) \subset \R^n$ the \Defn{standard simplex} of dimension
$n-1$.

\begin{thm}\label{thm:simplex_edges}
    For $n \ge 4$, let $p_{ij}$ be a point in the relative interior of the
    edge $[e_i,e_j] \subset \Delta_{n-1}$ for $1 \le i < j \le n$. Then
    \[
        P \ := \ \conv\{ p_{ij} : 1 \le i < j \le n \}
    \]
    is a combinatorial $(n,2)$-hypersimplex. Up to projective transformation,
    every combinatorial $(n,2)$-hypersimplex arises this way.
\end{thm}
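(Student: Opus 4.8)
The plan is to verify both directions: that $P=\conv\{p_{ij}\}$ is always a combinatorial $(n,2)$-hypersimplex, and that every combinatorial $(n,2)$-hypersimplex arises this way. For the first direction, I would write $p_{ij}=\lambda_{ij}e_i+(1-\lambda_{ij})e_j$ with $\lambda_{ij}\in(0,1)$ and exhibit, for each $\ell\in[n]$, an explicit hyperplane of the form $\{x_\ell = 0\}$ (in the coordinates of $\R^n$, restricted to the affine hull $\{\sum_i x_i = 1\}$) and one of the form $\sum_{i} c_i x_i = c_\ell$ that cut out the would-be $F$-facet $F_\ell = \conv\{p_{ij}: \ell\notin\{i,j\}\}$ and $G$-facet $G_\ell = \conv\{p_{ij}: \ell\in\{i,j\}\}$. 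The key point is that $x_\ell\ge 0$ is valid on all of $\Delta_{n-1}$ hence on $P$, and the vertices $p_{ij}$ with $\ell\notin\{i,j\}$ are exactly the ones lying on $\{x_\ell=0\}$ (since $\lambda_{ij}\in(0,1)$ means $p_{ij}$ has support exactly $\{i,j\}$); this gives the $F$-facets. For the $G$-facets one needs a supporting hyperplane through the $2(n-1)$ points $p_{i\ell}$ ($i\ne\ell$): I would take the hyperplane $\aff\{p_{i\ell}: i\ne \ell\}$, check it has dimension $n-2$ (the $p_{i\ell}$ are affinely independent since the $e_i$ are and each $p_{i\ell}$ is a genuine interior point of the edge), and check it supports $P$, e.g. by noting all other vertices $p_{ij}$ with $\ell\notin\{i,j\}$ lie strictly on one side. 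Having produced $2n$ facets realizing the right incidence pattern, I would invoke that a polytope with $2n$ vertices and $2n$ facets whose vertex-facet incidences match those of $\Delta_{n,2}$ is combinatorially a $(n,2)$-hypersimplex — here it is cleanest to observe the abstract incidence data (each vertex $p_{ij}$ lies on $F_\ell$ for $\ell\notin\{i,j\}$ and on $G_i,G_j$) literally coincides with that of $\Delta_{n,2}$, which (being a simple-facet configuration) determines the face lattice.

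For the converse, let $P'$ be an arbitrary combinatorial $(n,2)$-hypersimplex. By Lemma~\ref{lem:generic}, since $2k=4<n+2$ for $n\ge 4$, $P'$ is $F$-generic, so the $n$ hyperplanes supporting its $F$-facets are not projectively concurrent and bound a simplex up to projective transformation. Apply an admissible projective transformation taking that simplex to the standard simplex $\Delta_{n-1}=\conv(e_1,\dots,e_n)$, with the facet hyperplane $\{x_\ell=0\}$ of $\Delta_{n-1}$ corresponding to the $F$-facet $F_\ell$ of $P'$. Then each vertex of $P'$ lies on exactly $n-2$ of the $F$-facets (combinatorially, vertex $\{i,j\}$ of $\Delta_{n,2}$ lies on $F_\ell$ for all $\ell\ne i,j$), hence on exactly $n-2$ of the coordinate hyperplanes $\{x_\ell=0\}$, so it has support of size exactly $2$, i.e. it lies on a line $\aff(e_i,e_j)$ intersected with $\Delta_{n-1}$ — which is the edge $[e_i,e_j]$, since the vertex is in the closed simplex. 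Moreover the vertex is in the relative interior of that edge, because it is not on $F_i$ or $F_j$ (combinatorially $\{i,j\}$ is not incident to $F_i$ or $F_j$), so its $i$th and $j$th coordinates are both nonzero. This exhibits $P'$ as $\conv\{p_{ij}\}$ with $p_{ij}\in\operatorname{relint}[e_i,e_j]$.

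The main obstacle I anticipate is the supporting-hyperplane claim for the $G$-facets in the forward direction: one must check that $\aff\{p_{i\ell}:i\ne\ell\}$ genuinely supports $P$ (all remaining vertices strictly on one side) and is not, say, the affine hull of a lower-dimensional face — equivalently that $P$ does not collapse. A clean way around explicit inequality-juggling is to argue that $P$ is $F$-generic by construction (the $F$-facet hyperplanes are the coordinate hyperplanes, which bound the simplex $\Delta_{n-1}$), and that $P$ has the same number of vertices as $\Delta_{n,2}$ with each lying on the prescribed $F$-facets; combined with $n$-dimensionality of $\aff(P)$ this pins down enough of the face structure, and the $G$-facets then appear as the remaining facets forced by the fact that $P$ is the convex hull of exactly these $\binom{n}{2}$ points. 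Alternatively, and perhaps most economically, once the converse direction is established one knows the map $(p_{ij})\mapsto\conv\{p_{ij}\}$ is surjective onto $\Rel_{n,2}$; to get that every such $P$ is *a* combinatorial hypersimplex it then suffices to exhibit one $(p_{ij})$ giving $\Delta_{n,2}$ (take $p_{ij}=\tfrac12(e_i+e_j)$, which is the standard realization up to affine transformation) together with a connectedness/openness argument — but since the edge parameters $(\lambda_{ij})\in(0,1)^{\binom{n}{2}}$ form a connected set and the combinatorial type is locally constant where $P$ stays full-dimensional with all $\binom{n}{2}$ points as vertices, one concludes the combinatorial type is constant throughout. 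I would present the direct facet computation as the primary argument and keep the connectedness remark in reserve.
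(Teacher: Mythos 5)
Your converse direction is essentially the paper's: apply Lemma~\ref{lem:generic} to get $F$-genericity, normalize the $F$-simplex to $\Delta_{n-1}$ by a projective transformation, and read off from the vertex--$F$-facet incidences that each vertex has support of size exactly two and hence sits in the relative interior of an edge. That part is fine.

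For the forward direction you take a different, more computational route than the paper, and it has a genuine gap. The paper observes that $P$ is obtained by \emph{truncating} all vertices of the simple polytope $\Delta_{n-1}$ by the (unique) hyperplanes through $\{p_{ij}: j\neq i\}$; because $\Delta_{n-1}$ is simple, truncation is combinatorially controlled: the new facets are vertex-figure simplices $\cong\Delta_{n-2}$, the old facets become truncated $(n-2)$-simplices, and an induction on $n$ then identifies these truncated facets with $\Delta_{n-1,2}$, settling the face lattice with no inequality juggling. You instead try to exhibit the $2n$ facet inequalities directly and match incidences. Two problems. First, you flag but do not resolve the supporting-hyperplane claim for the $G$-facets — this \emph{can} be done by normalizing $H_\ell=\{\sum a_kx_k=1\}$ with $a_\ell>1$ (forcing $a_m<1$ for $m\neq\ell$, so $p_{ij}$ with $i,j\neq\ell$ satisfies $\sum a_kx_k<1$), but you don't carry it out. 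Second, and more seriously, even after you have $2n$ valid facet-defining inequalities with the right incidences you have not shown that these are \emph{all} the facets of $P$; matching the incidence matrix of $\Delta_{n,2}$ requires knowing there are no further rows, i.e.\ no extra facets. Your argument never rules this out, whereas the truncation picture gives it for free (a simple $d$-polytope with $n$ facets has, after truncating all $n$ vertices, exactly $2n$ facets). Your ``connectedness'' fallback is also circular as stated: local constancy of the combinatorial type is exactly the kind of stability assertion that presupposes the facet structure you are trying to establish. Also two small slips: for each $\ell$ there are $n-1$ (not $2(n-1)$) points $p_{i\ell}$, and $P$ has $\binom{n}{2}$ (not $2n$) vertices.
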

\begin{proof}
    Since $\Delta_{n-1}$ is a simple polytope, the polytope $P$ is the result
    of truncating every vertex $e_i$ of $\Delta_{n-1}$ by the unique
    hyperplane spanned by $\{p_{ij} : j \neq i\}$. Hence, $P$ has
    $\binom{n}{2}$ vertices and every $p_{ij}$ is incident to exactly two
    facets isomorphic to $\Delta_{n-1,1} \cong \Delta_{n-2}$. If $n=4$, then
    it is easily seen that $P$ is an octahedron. For $n > 4$, we get by
    induction on $n$ that the remaining facets $P \cap \{ x : x_i = 0 \}$ are
    isomorphic to $\Delta_{n-1,2}$, which implies the first claim.

    For the second statement, let $P$ be a combinatorial $(n,2)$-hypersimplex.
    We know from Lemma~\ref{lem:generic} that the
    $F$-facets bound a projective simplex. By a
    suitable projective transformation, we may assume that this is exactly
    $\Delta_{n-1}$.  Each vertex of $P$ lies in all the $F$-facets except for
    two. So every vertex of $P$ lies in the relative interior of a unique edge
    of $\Delta_{n-1}$.
\end{proof}

Note that the representation given in Theorem~\ref{thm:simplex_edges} is not
unique up to projective transformation.  The simplest way to factor out the
projective transformations is to perform a first truncation at the vertex
$e_1$ of $\Delta_{n-1}$. Then $\conv\{p_{12},\dots, p_{1n}, e_2,\dots, e_n\}$
is a prism over a simplex, which is projectively unique
(cf.~\cite[Ex.~4.8.30]{Grunbaum}). It then only remains to choose
$\binom{n-1}{2}$ points in the interior of every edge of the base of the
prism. Each choice produces a projectively distinct $(n,2)$-hypersimplex, and
every $(n,2)$-hypersimplex arises this way, up to projective transformation.
This completes the proof of Theorem~\ref{thm:dim_rel_space}.

\begin{repthm}{thm:dim_rel_space}
    For $n \geq 4$, $\Rel_{n,2}$ is {rationally} equivalent to the
    interior of a $\binom{n-1}{2}$-dimensional cube. In particular,
    $\Rel_{n,2}$ is homeomorphic to an open ball and hence contractible. 
\end{repthm}

To recover the description as an $n\times n$ matrix, we can proceed as
follows. Set the (projective) simplex $\Delta_F$ bounded by the $F$-facets to
be the standard simplex. Now, for every (oriented) edge $[e_i,e_j]$ of
$\Delta_F$, consider the ratio
$\rho_{ij}=\frac{\|e_i-p_{ij}\|}{\|e_j-p_{ij}\|}$ for $i \neq j$.  It is not
hard to see that the diagonal entries of the $G$-matrix are negative and that,
if we scale its columns so that they are $-1$, then we are left with the
matrix
\begin{equation}\label{eq:MatDeltan2}
\left(
       \begin{array}{c c c c}
	-1 & \rho_{12} & \cdots  & \rho_{1n}\\
	\rho_{21} & -1 & \cdots  & \rho_{2n}\\
	\vdots & \vdots& \ddots  & \vdots   \\ 
	\rho_{n1} & \rho_{n2}& \cdots  & -1	
       \end{array}\right);
\end{equation}
containing $-1$ in the diagonal and the ratios $\rho_{ij}$ in the remaining entries. Notice how the condition on the vanishing $2\times 2$ principal minors coincides with the relation $\rho_{ij}=\rho_{ji}^{-1}$. By Theorem~\ref{thm:simplex_edges}, any choice of positive ratios fulfilling this relation gives rise to a realization of an $(n,2)$-hypersimplex.

All non-diagonal entries are positive by construction. Multiplying the $i$th
column by $\rho_{i1}$ and the $i$th row by $\rho_{1i}$ for $2\leq i\leq n$,
which corresponds to a projective transformation, we are left with an
equivalent realization. Relabelling the ratios, it is of the form:
\begin{equation}\label{eq:ProjDeltan2}
\left(
       \begin{array}{c c c c}
	-1 & \phantom{-}1 & \cdots  & \phantom{-}1\\
	\phantom{-}1 & -1 & \cdots  & \rho_{2n}\\
	\phantom{-}\vdots & \phantom{-}\vdots& \ddots  & \phantom{-}\vdots   \\ 
	\phantom{-}1 & \rho_{n2}& \cdots  & -1	
       \end{array}\right).
\end{equation}
Choosing a realization of $\Delta_{n,2}$ up to projective transformation amounts to choosing $\binom{n-1}{2}$ positive ratios and we recover the description of Theorem~\ref{thm:dim_rel_space}.
Actually, this is the transformation that we also used before, which transforms the truncated simplex into a prism over a simplex. The $\binom{n-1}{2}$ remaining ratios correspond to the edges of the basis of the prism.

Notice that, although a similar argumentation provides the realization space up to affine transformation, that setup is slightly more delicate. While a choice of a point on each of the 
$\binom{n}{2}$ edges of a standard simplex gives a unique affine realization of $\Delta_{n,2}$, not every affine realization can be obtained this way: The $F$-simplex might be unbounded for some realizations of $\Delta_{n,2}$. Hence, there are several ``patches'' in the affine realization space, each one corresponding to a different relative position of the $F$-simplex with respect to the hyperplane at infinity. For every patch, realizations are parametrized by the position of the points in the edges of $\Delta_F$.

\begin{ex}
 To show an example, we will work with $\Delta_{3,2}$, which we look at as two nested projective simplices, the second being the convex hull of a point in each edge of the first. Even though this is not strictly a hypersimplex, it provides simpler figures than $\Delta_{4,2}$. Figure~\ref{fig:Deltan2} depicts four such realizations.
  \begin{figure}[htpb]
  \includegraphics[width=\linewidth]{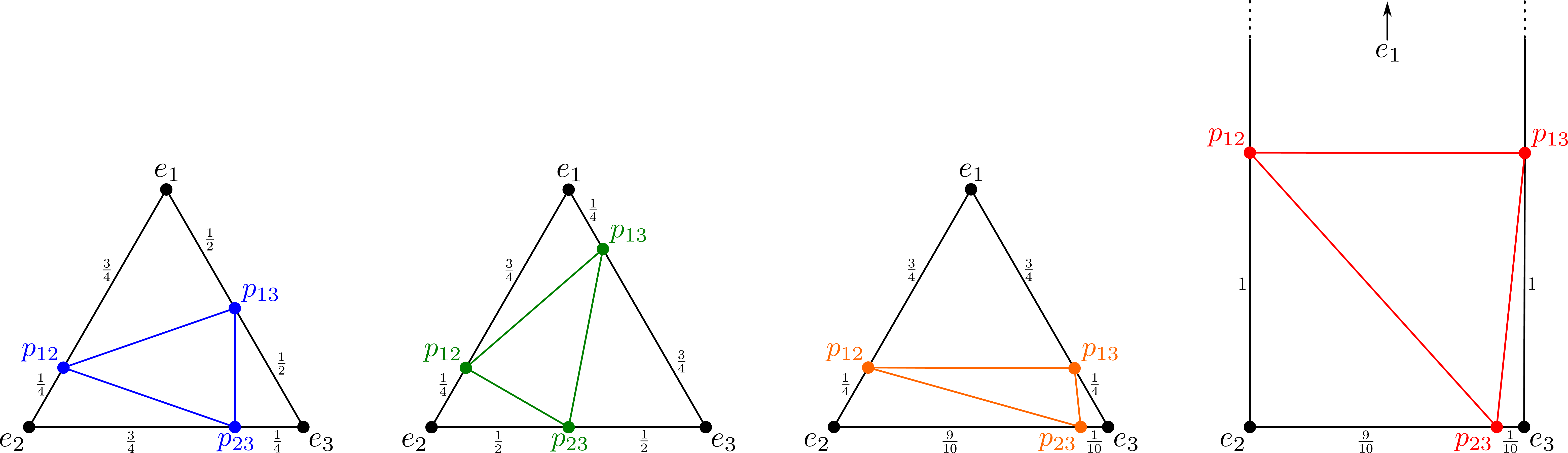}
  \caption{Four (projectively equivalent) realizations of $\Delta_{3,2}$ as two nested (projective) simplices.}\label{fig:Deltan2}
 \end{figure}

 In the first three, the outer simplex is a standard simplex. Computing the ratios of~\eqref{eq:MatDeltan2} we recover the matrices
 \begin{align*}
  \left(
       \begin{array}{c c c}
	-1 & \phantom{-}3 & \phantom{-}1\\
	\phantom{-}\frac13 & -1 & \phantom{-}3\\
	\phantom{-}1 & \phantom{-}\frac13 & -1	
       \end{array}\right)\text{, } \qquad\qquad
  \left(
       \begin{array}{c c c}
	-1 & \phantom{-}3 & \phantom{-}\frac13\\
	\phantom{-}\frac13 & -1 & \phantom{-}1\\
	\phantom{-}3 & \phantom{-}1 & -1	
       \end{array}\right)
       \qquad \text{ and } \qquad
  \left(
       \begin{array}{c c c}
	-1 & \phantom{-}3 & \phantom{-}3\\
	\phantom{-}\frac13 & -1 & \phantom{-}9\\
	\phantom{-}\frac13 & \phantom{-}\frac19 & -1	
       \end{array}\right).
 \end{align*}
We can transform the first into the second (resp.\ third) by multiplying the
third row by $3$ (resp.\ $\frac13$) and the third column by $\frac13$ (resp.\ 
$3$), which means that they represent projectively equivalent realizations. To
fix a unique projective representative, we send $e_1$ to infinity, and impose
that $p_{12},p_{13},e_2,e_3$ form a prism over a standard simplex (in this
case a square), as in the fourth figure. The ratios that we get on the base of
the prism give the entries of the matrix corresponding to
\eqref{eq:ProjDeltan2}:
\[
  \left(
       \begin{array}{c c c}
	-1 & \phantom{-}1 & \phantom{-}1\\
	\phantom{-}1 & -1 & \phantom{-}9\\
	\phantom{-}1 & \phantom{-}\frac19 & -1	
       \end{array}\right).
\]
In this example, this projective transformation corresponds to multiplying the second row and the third column of the first matrix by $3$, and its third row and second column by $\frac13$.
\end{ex}

With the aid of this description, we can produce our first example of a non
$FG$-generic hypersimplex. The following is due to Francisco Santos (personal
communication), who found nicer coordinates than our original example.

\begin{prop}\label{prop:singular62}
    There are $(6,2)$-hypersimplices that are not $G$-generic.
\end{prop}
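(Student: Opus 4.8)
The plan is to exhibit an explicit $6\times 6$ matrix of the form \eqref{eq:MatDeltan2} — that is, with $-1$ on the diagonal, positive off-diagonal entries, and satisfying $\rho_{ij}=\rho_{ji}^{-1}$ — whose determinant vanishes, while still being a legitimate point of $\Rel_{6,2}$. By Theorem~\ref{thm:simplex_edges}, \emph{any} assignment of positive ratios $\rho_{ij}$ with $\rho_{ij}\rho_{ji}=1$ comes from choosing a relative-interior point $p_{ij}$ on each edge $[e_i,e_j]$ of $\Delta_5$, hence automatically yields a combinatorial $(6,2)$-hypersimplex $P$; there is nothing to check on the combinatorial side. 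So the whole task reduces to finding such a ratio matrix $G=(g_1,\dots,g_6)$ with $\det G = 0$, and then invoking Lemma~\ref{lem:G_generic} (valid here since $k=2\le \tfrac n2=3$): $P$ fails to be $G$-generic precisely because $\det(g_1,\dots,g_6)=0$.

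First I would write down the candidate matrix — the coordinates attributed to Francisco Santos — and verify the three easy conditions: all diagonal entries equal $-1$, all off-diagonal entries are strictly positive reals, and the $2\times 2$ principal minors vanish (equivalently $\rho_{ij}\rho_{ji}=1$ for every pair), which is exactly the constraint \eqref{eqn:G_pm} for $k=2$ that characterizes membership in $\Rel_{6,2}$. These are immediate term-by-term checks. Then I would compute the determinant of the $6\times6$ matrix and confirm it is zero — a routine but slightly lengthy calculation, best organized either by row/column operations exploiting the symmetry $\rho_{ij}=1/\rho_{ji}$, or simply by exhibiting an explicit nonzero kernel vector $v$ with $Gv=0$, which makes the verification a handful of dot products rather than a cofactor expansion. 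Displaying the kernel vector is the cleanest presentation.

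Having $\det G = 0$, Lemma~\ref{lem:G_generic} directly gives that the corresponding hypersimplex $P\in\Rel_{6,2}$ is not $G$-generic, which is the assertion. (One should note in passing that $P$ is still genuinely a combinatorial $(6,2)$-hypersimplex — not degenerate — because it arises from truncating $\Delta_5$ at points in the \emph{relative interiors} of its edges, per Theorem~\ref{thm:simplex_edges}; the failure is purely projective, the supporting hyperplanes of the $G$-facets being concurrent.) The main obstacle is not conceptual but a matter of bookkeeping: finding ratios satisfying both $\rho_{ij}\rho_{ji}=1$ and the single polynomial equation $\det(g_1,\dots,g_6)=0$ simultaneously, with all entries positive. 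Since Santos has supplied clean coordinates, the remaining work is just to transcribe them and certify the vanishing determinant, ideally by presenting the explicit kernel vector so the reader can check $Gv=\mathbf 0$ by hand.
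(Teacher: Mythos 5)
Your proposal is correct and follows essentially the same route as the paper: exhibit an explicit ratio matrix with $-1$ diagonal, positive off-diagonal entries, vanishing $2\times 2$ principal minors, and zero determinant, then invoke Lemma~\ref{lem:G_generic}. The paper uses Santos's matrix with entries $2\sqrt{6}\pm 5$ and simply states that the determinant vanishes; your suggestion to certify this by exhibiting a kernel vector is a minor stylistic improvement, not a different argument.
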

\begin{proof}
    The following matrix corresponds to a non-$G$-generic realization of
    $\Delta_{6,2}$
    \[
\left(
       \begin{array}{c c c c c c}
	-1 & \phantom{-}2\sqrt{6}+5 & -2 \sqrt{6} +5 & \phantom{-}1 & \phantom{-}1 & \phantom{-}1\\
	-2\sqrt{6}+5 & -1 & \phantom{-}2\sqrt{6}+5 & \phantom{-}1& \phantom{-}1& \phantom{-}1\\
	\phantom{-}2\sqrt{6}+5 & -2\sqrt{6}+5 & -1 & \phantom{-}1& \phantom{-}1 & \phantom{-}1 \\
	\phantom{-}1& \phantom{-}1& \phantom{-}1& -1& \phantom{-}1& \phantom{-}1\\
	\phantom{-}1& \phantom{-}1& \phantom{-}1& \phantom{-}1& -1& \phantom{-}1\\
	\phantom{-}1& \phantom{-}1& \phantom{-}1& \phantom{-}1& \phantom{-}1& -1
       \end{array}\right).
\]
    It can be checked that the determinant is zero and
    Lemma~\ref{lem:G_generic} finishes the proof.
\end{proof}

\begin{cor}\label{cor:prescribability}
Not every combinatorial $(n,k)$-hypersimplex is a facet of a combinatorial $(n+1,k+1)$-hypersimplex.
\end{cor}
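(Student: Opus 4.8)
The plan is to exhibit a single counterexample: the non-$G$-generic realization $P$ of $\Delta_{6,2}$ produced in Proposition~\ref{prop:singular62} cannot occur as a facet of any combinatorial $(7,3)$-hypersimplex, which settles the statement with $(n,k)=(6,2)$. (Here ``$P$ is a facet of $\hat P$'' is read in the natural way as ``$\hat P$ has a facet projectively equivalent to $P$'', consistent with the prescribability question behind Corollary~\ref{cor:n2prescribability}.)

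First I would record the combinatorial bookkeeping. The facets of a combinatorial $(7,3)$-hypersimplex are the combinatorial $(6,3)$-hypersimplices (the $F$-facets, with $\binom{6}{3}=20$ vertices) and the combinatorial $(6,2)$-hypersimplices (the $G$-facets, with $\binom{6}{2}=15$ vertices). Since $P$ has $\binom{6}{2}=15$ vertices, if $P$ were a facet of a combinatorial $(7,3)$-hypersimplex $\hat P$, then $P$ must be projectively equivalent to a $G$-facet $G_i=\homog(\hat P)\cap g_i^\perp$, where $g_1^\perp,\dots,g_7^\perp$ and $f_1^\perp,\dots,f_7^\perp$ denote the $G$- and $F$-facet hyperplanes of $\homog(\hat P)\subset\R^7$.

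The core of the argument is then to show that \emph{every} $G$-facet of $\hat P$ is $G$-generic, which contradicts the choice of $P$. By Lemma~\ref{lem:generic}, $\hat P$ is $G$-generic (indeed $FG$-generic, since $7-2<2\cdot 3<7+2$), so by Lemma~\ref{lem:G_generic} the functionals $g_1,\dots,g_7$ form a basis of $(\R^7)^*$. Fixing $i$ and setting $V:=g_i^\perp$, the $G$-facets of the $(6,2)$-hypersimplex $G_i$ are the ridges $G_i\cap G_j$ for $j\neq i$, and their supporting hyperplanes inside the $6$-dimensional space $V$ are $g_j^\perp\cap V$. The restriction map $(\R^7)^*\to V^*$ is surjective with kernel $\langle g_i\rangle$, hence it sends the basis $\{g_1,\dots,g_7\}$ to $0$ together with a basis $\{\,g_j|_V : j\neq i\,\}$ of $V^*$; therefore $\bigcap_{j\neq i}(g_j^\perp\cap V)=\{0\}$ in $V$, i.e.\ these supporting hyperplanes are not projectively concurrent. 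Since $2\le 6/2$, Lemma~\ref{lem:G_generic} then certifies that $G_i$ is $G$-generic, and as $G$-genericity is invariant under projective transformations, $P\cong G_i$ would be $G$-generic as well, contradicting Proposition~\ref{prop:singular62}. Thus no such $\hat P$ exists. (Alternatively one may invoke the remark that $G$-facets of an $FG$-generic $(n,k)$-hypersimplex with $2k\le n$ are again $FG$-generic, applied to $\hat P=\Delta_{7,3}$.)

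The hard part will be making the third step precise: one must check that the definition of $G$-genericity for the facet $G_i$ — phrased via projective concurrence of the hyperplanes supporting \emph{its own} $G$-facets — really does translate into the linear-algebra statement about the restricted functionals $g_j|_V$, keeping the dehomogenization and the change of ambient space straight, and that $G$-genericity is genuinely a projective invariant so that the projective equivalence built into ``$P$ is a facet of $\hat P$'' is harmless. Everything else is routine combinatorics of the face lattice of $\Delta_{7,3}$ and elementary linear algebra.
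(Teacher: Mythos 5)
Your proof is correct and follows the same route as the paper: take the non-$G$-generic $(6,2)$-hypersimplex of Proposition~\ref{prop:singular62} and observe that it cannot be a $G$-facet of any combinatorial $(7,3)$-hypersimplex because $(7,3)$-hypersimplices are $FG$-generic (Lemma~\ref{lem:generic}) and this property is inherited by their $G$-facets. You merely spell out the linear-algebra behind that inheritance (the restriction of the basis $g_1,\dots,g_7$ to $V=g_i^\perp$), which the paper only asserts in a remark preceding Theorem~\ref{thm:main_generic}; this is a sound and welcome clarification rather than a different argument.
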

\begin{proof}
Every combinatorial $(7,3)$-hypersimplex is $FG$-generic by
Lemma~\ref{lem:generic}, a property that is inherited by its $G$-facets.
Hence, the combinatorial $(6,2)$-hypersimplex of Proposition~\ref{prop:singular62} is not a facet of any combinatorial $(7,3)$-hypersimplex.
\end{proof}

In contrast, the shape of facets of $(n,2)$-hypersimplices can be prescribed. This is a direct corollary of 
the construction from Theorem~\ref{thm:simplex_edges}.

\begin{cor}\label{cor:n2prescribability}
Every combinatorial $(n,2)$-hypersimplex is a facet of a combinatorial $(n+1,2)$-hypersimplex.
\end{cor}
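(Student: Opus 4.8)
The plan is to use the explicit construction of Theorem~\ref{thm:simplex_edges} twice, in consecutive dimensions. Since the assertion ``is a facet of a combinatorial $(n+1,2)$-hypersimplex'' is purely combinatorial, it suffices to exhibit one realization of a combinatorial $(n+1,2)$-hypersimplex having a facet of the combinatorial type of the given $P$. So first I would invoke Theorem~\ref{thm:simplex_edges} to replace $P$ by a convenient projectively equivalent model: there are points $p_{ij}$, one in the relative interior of each edge $[e_i,e_j]$ of the standard simplex $\Delta_{n-1}=\conv(e_1,\dots,e_n)\subset\R^n$, with $P=\conv\{p_{ij}:1\le i<j\le n\}$.

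Next I would enlarge the ambient simplex. Embed $\R^n=\{x_{n+1}=0\}\subset\R^{n+1}$, so that $\Delta_{n-1}=\conv(e_1,\dots,e_n)$ is the facet $\{x_{n+1}=0\}$ of $\Delta_n=\conv(e_1,\dots,e_{n+1})$. Choose \emph{arbitrary} points $p_{i,n+1}$ in the relative interiors of the new edges $[e_i,e_{n+1}]$ for $1\le i\le n$, and set $P':=\conv\{p_{ij}:1\le i<j\le n+1\}$. Applying Theorem~\ref{thm:simplex_edges} with $n+1$ in place of $n$, the polytope $P'$ is a combinatorial $(n+1,2)$-hypersimplex.

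Finally I would identify $P$ with the $F$-facet $F_{n+1}$ of $P'$. All vertices $p_{ij}$ of $P'$ lie in $\Delta_n$, so the supporting hyperplane $\{x_{n+1}=0\}$ of $\Delta_n$ also supports $P'$; the corresponding face is the convex hull of the vertices of $P'$ with $x_{n+1}=0$. Since each $p_{i,n+1}$ lies in the relative interior of $[e_i,e_{n+1}]$, it has $x_{n+1}>0$, so the only vertices in that hyperplane are the $p_{ij}$ with $i<j\le n$; hence $P'\cap\{x_{n+1}=0\}=P$. By~\eqref{eqn:FG} this face is precisely the $F$-facet $F_{n+1}$ of $P'$, of dimension $n-1=\dim P'-1$, so $P$ is a facet of $P'$, completing the proof.

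There is essentially no genuine obstacle here; the only points requiring care are that Theorem~\ref{thm:simplex_edges} provides the model of $P$ only up to projective transformation (which is why the argument is run combinatorially), and that the auxiliary points $p_{i,n+1}$ must be taken in the \emph{relative interiors} of the edges $[e_i,e_{n+1}]$, so that the zero-locus $\{x_{n+1}=0\}$ cuts out exactly $P$ and nothing more.
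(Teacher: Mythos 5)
Your proof is correct and takes the same approach the paper intends: the paper explicitly presents this corollary as a direct consequence of the construction of Theorem~\ref{thm:simplex_edges}, and you simply carry out that construction one dimension higher and check that the hyperplane $\{x_{n+1}=0\}$ cuts out the given $P$ as the $F$-facet $F_{n+1}$. The details you fill in (the new points must lie in the relative interiors so that $x_{n+1}>0$ strictly on them, and the statement is read combinatorially / up to projective equivalence) are exactly the right ones to make the argument airtight.
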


\subsection{The $(5,2)$-hypersimplex}\label{sec:52}

The $(5,2)$-hypersimplex is special. As we argued in the proof of
Proposition~\ref{prop:small_cases}, the nonnegative rank of the hypersimplex
$\Delta_{5,2}$ in its defining realization~\eqref{eqn:hyper} is equal to $9$.
In light of Theorem~\ref{thm:main}, this deviates from the `expected'
nonnegative rank.  The goal of this section is to show that \emph{generic}
realizations of $\Delta_{5,2}$ have nonnegative rank $10$.

We just described the (projective) realization space of $\Delta_{5,2}$  obtained by
choosing an interior point in every edge of the base of $\Delta_3\times \Delta_1$. That means that
$\dim\Rel_{5,2}=6$.

We now claim that the realization $P$ of $\Delta_{5,2}$  given as the
convex hull of the columns
\begin{equation}\label{eqn:52special}
    \left(\begin{array}{rrrrrrrrrr}
        35 & 35 & 35 & 35 & 0 & 0 & 0 & 0 & 0 & 0 \\
        35 & 0 & 0 & 0 & 50 & 42 & 20 & 0 & 0 & 0 \\
        0 & 35 & 0 & 0 & 20 & 0 & 0 & 56 & 60 & 0 \\
        0 & 0 & 35 & 0 & 0 & 28 & 0 & 14 & 0 & 42 \\
        0 & 0 & 0 & 35 & 0 & 0 & 50 & 0 & 10 & 28
    \end{array}\right)
\end{equation}
has nonnegative rank $10$. To prove this, we again use a computer to compute
the refined rectangle covering number from~\cite{OVW14}. Like the ordinary
rectangle covering number, the refined rectangle covering number yields lower
bounds on the nonnegative rank of a polytope $P$ but instead of only the
support pattern of the slack matrix $S_P$, it takes into account simple
relations between the actual values. A covering of $S = S_P$ by rectangles
$R_1,\dots,R_s$ is \Defn{refined} if for any pair of indices $(i,k),(j,l)$
such that
\begin{equation}\label{eqn:rrc}
    S_{ik} S_{jl} \ > \ S_{il}S_{jk},
\end{equation}
then the number of rectangles containing $S_{ik}$ or $S_{jl}$ is at least two.
(necessarily positive) entries $S_{ik},S_{jl}$ are contained in at least
two rectangles. Of course, if $S_{il}$ or $S_{jk}$ are zero, then this reduces
to the condition for ordinary coverings by rectangles.  The \Defn{refined rectangle
covering number} $\rrc(S_P)$ is the least size of a refined covering. It is
shown in~\cite[Theorem 3.4]{OVW14} that $\rrc(S_P)$ lies between $\rc(S_P)$
and $\rkN(P)$ and thus yields a possibly better lower bound on the nonnegative
rank. We will work with the following relaxation that we call the
\Defn{generic} refined rectangle covering number. We consider coverings by
ordinary rectangles with the additional condition that for any pair of indices
$(i,k),(j,l)$ such that
\begin{equation}\label{eqn:grrc}
    S_{ik}, S_{jl},S_{il}, S_{jk} > 0 \quad \text{ and } \quad
    S_{ik} S_{jl} \ \neq \ S_{il}S_{jk},
\end{equation}
the four entries $S_{ik}, S_{jl},S_{il}, S_{jk}$ are covered by at least two
rectangles. The denomination `generic' is explained in the proof of
Theorem~\ref{thm:52} below.

As for the rectangle covering number, to determine if there is a generic
refined covering of a given size can be phrased as a Boolean formula.  For the
example given above and the number of rectangles set to $9$, a \texttt{python}
script in the appendix produces such a formula with $450$ Boolean variables
and $16796$ clauses.  Any suitable SAT solver verifies that this formula is
unsatisfiable which proves that the $(5,2)$-hypersimplex given
in~\eqref{eqn:52special} has nonnegative rank $10$.

\begin{thm}\label{thm:52}
    The combinatorial $(5,2)$-hypersimplices with nonnegative rank $10$ form a
    dense open subset of $\Rel_{5,2}$.
\end{thm}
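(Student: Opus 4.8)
The plan is to establish the two halves of the statement separately. Openness of $E_{5,2} := \{P \in \Rel_{5,2} : \rkN(P)=10\}$ is already noted in the introduction (and in any case follows because an extension of a polytope with few facets can be perturbed, so $\{\,\rkN \le 9\,\}$ is relatively open in $\Rel_{5,2}$), so the work is to show that $E_{5,2}$ contains a dense open subset. Since every combinatorial $(5,2)$-hypersimplex has $\binom{5}{2}=10$ vertices and $10$ facets, we have $\rkN(P)\le 10$ for all $P\in\Rel_{5,2}$; moreover $\rrc_{\mathrm{gen}}(S_P)\le\rkN(P)$, because in any nonnegative factorization $S_P=\sum_t u_tv_t^{\top}$ the supports of the rank-one summands cover $\mathrm{supp}(S_P)$ by rectangles, and a single rank-one summand containing all four entries of a $2\times2$ block forces that block's $2\times2$ minor to vanish, so at least two summands must meet any $2\times2$ block with non-vanishing minor. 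Hence it suffices to produce a dense open set of realizations whose slack matrix admits no generic refined covering by $9$ rectangles, and the SAT computation described above does exactly this for the realization $P_0$ of~\eqref{eqn:52special}.

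The point that turns one realization into a dense set is that the instance of the generic-refined-covering problem depends on $P$ only through which $2\times 2$ minors of the slack matrix vanish. First I would note that the variables and the ordinary-rectangle clauses are governed by the support pattern of $S_P$, which is constant on $\Rel_{5,2}$, while the clauses coming from~\eqref{eqn:grrc} are indexed by the quadruples of indices with all four slack entries in the (fixed) support and with $m_{ik,jl}(P):=(S_P)_{ik}(S_P)_{jl}-(S_P)_{il}(S_P)_{jk}\ne 0$. In the coordinates on $\Rel_{5,2}$ given by the ratios of~\eqref{eq:ProjDeltan2} (equivalently, by Theorem~\ref{thm:dim_rel_space}, the interior of a $6$-cube), each $m_{ik,jl}$ is a rational function; since $\Rel_{5,2}$ is connected, every $m_{ik,jl}$ that is not identically zero vanishes only on a nowhere-dense set, and as there are finitely many of them their common non-vanishing locus $U\subseteq\Rel_{5,2}$ is dense and open. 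Any minor that is nonzero at $P_0$ is in particular not identically zero, so the set $\mathcal T(P_0)$ of quadruples producing a clause of type~\eqref{eqn:grrc} for $P_0$ is contained in the analogous set $\mathcal T(P)$ for every $P\in U$.

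Then I would compare the two SAT instances: for $P\in U$ the instance encoding ``$\rrc_{\mathrm{gen}}(S_P)\le 9$'' has the same variables and ordinary-rectangle clauses as the one for $P_0$, together with the clauses indexed by $\mathcal T(P)\supseteq\mathcal T(P_0)$, so its clause set contains that of $P_0$; the latter being unsatisfiable, so is the former. Therefore $\rrc_{\mathrm{gen}}(S_P)\ge 10$, which forces $\rkN(P)=10$, i.e. $U\subseteq E_{5,2}$; together with the openness of $E_{5,2}$ this proves the theorem. The one delicate point — and the reason for the name ``generic'' — is precisely that the constraint set used in the computation for $P_0$ must be a subset of the one that appears for a generic realization; this is guaranteed by the inclusion $\mathcal T(P_0)\subseteq\mathcal T(P)$, and in fact one can check that $P_0$ already lies in $U$, so that $\mathcal T(P_0)=\mathcal T(P)$ is the full constraint set and no loss is incurred.
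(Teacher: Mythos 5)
Your argument is essentially the paper's proof, spelled out with more care: the paper fixes the collection $\mathcal{I}$ of index quadruples satisfying \eqref{eqn:grrc} at the realization \eqref{eqn:52special}, observes that the locus where one of those $2\times 2$ minors vanishes is a proper algebraic subset of $\Rel_{5,2}$, and leaves implicit the monotonicity you make explicit (the generic-refined SAT instance for any $P$ off that locus contains all the clauses already verified unsatisfiable for $P_0$, hence $\rrc_{\mathrm{gen}}(S_P)\geq 10$). One minor slip: your parenthetical claims $\{\rkN\le 9\}$ is relatively \emph{open}, which would make $E_{5,2}$ \emph{closed}; the intended fact is that $\{\rkN\le 9\}$ is closed (nonnegative rank is lower semicontinuous, by normalizing a sequence of size-$9$ nonnegative factorizations and extracting a convergent subsequence), so $E_{5,2}$ is open. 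This does not affect your proof since you also cite the openness claim from the introduction.
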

\begin{proof}
    Let $\mathcal{I}$ be the collection of all pairs of indices $(i,k;j,l)$
    satisfying~\eqref{eqn:grrc} in the realization of $\Delta_{5,2}$ given
    in~\eqref{eqn:52special}. The set of realizations $X \subseteq \Rel_{5,2}$
    for which~\eqref{eqn:grrc} is not satisfied for some $(i,k;j,l) \in
    \mathcal{I}$ form an algebraic subset of $\Rel_{5,2}$. Since the
    realization~\eqref{eqn:52special} is not in $X$, this shows that 
    $X$ is a proper subset and hence of measure zero in the open set
    $\Rel_{5,2}$.
\end{proof}

The same result extends easily to the remaining $(n,2)$-hypersimplices.
\begin{repcor}{cor:dense2}
    For $n \ge 5$, the combinatorial $(n,2)$-hypersimplices with extension
    complexity $2n$ are dense in $\Rel_{n,2}$.
\end{repcor}
\begin{proof}
 For $n=5$, this is the previous result. For $n\geq 6$, this follows from Theorem~\ref{thm:main_generic}
with the fact that $FG$-generic hypersimplices
 form a dense open subset of $\Rel_{n,2}$, because $G$-genericity is equivalent to the determinant of~\eqref{eq:ProjDeltan2} being non-zero (Lemma~\ref{lem:G_generic}). 
\end{proof}

As of now, we are unable to extend this result to higher values of $k$.
Nevertheless, we conjecture that except for a subset of zero measure, all
$(n,k)$-hypersimplices have nonnegative rank~$2n$ (for $n\geq 5$).  (Notice
that the existence of particular instances already ensures the existence of
open neighborhoods of hypersimplices of nonnegative rank~$2n$.)

\begin{repconj}{conj:dense}
    For $n\geq 5$ and $2\leq k\leq n-2$, the combinatorial hypersimplices of
    nonnegative rank~$2n$ form a dense open subset of $\Rel_{n,k}$.
\end{repconj}

Theorem~\ref{thm:main_generic} implies that combinatorial hypersimplices of
nonnegative rank~$2n$ are an open subset of $\Rel_{n,k}$. However, notice that
$\Rel_{n,k}$ is a quotient of the variety of vanishing principal $k$-minors, which 
has irreducible components of different dimensions~\cite{Wheeler15}. 
We cannot certify that the nonnegative rank~$2n$ subset is dense because it could skip 
a whole component.

\textbf{Acknowledgements.} We would like to thank Stefan Weltge, for help with
the computation of rectangle covering numbers and Günter Ziegler for
insightful discussions regarding realizations of hypersimplices. We are
indebted to Francisco Santos for extensive discussions regarding realization
spaces of hypersimplices. The coordinates for the example in
Proposition~\ref{prop:singular62} are due to him. Finally, we would like to
thank the anonymous reviewers for their careful reading and useful
suggestions; in particular, for pointing out the argument
from~\cite[Lemma~3.3]{FKPT13} which simplified and strengthened
Proposition~\ref{prop:rcbounds}.

\bibliographystyle{amsalpha}
\bibliography{HypersimplexXC-RS}

\end{document}